\documentclass[11pt]{article}
\usepackage{hyperref}
\usepackage{amsfonts,latexsym,rawfonts,amsmath,amssymb,amsthm}
\usepackage{amsmath,amssymb,amsfonts,latexsym,lscape,rawfonts}
\usepackage[cm]{fullpage}
\usepackage{amsmath,amscd, float,times,rotating}
\usepackage{pb-diagram}
\usepackage{color}
\numberwithin{equation}{section}
\addtolength{\oddsidemargin}{1cm}
\addtolength{\evensidemargin}{1cm}
\addtolength{\textwidth}{-2.5cm} \addtolength{\topmargin}{1cm}
\addtolength{\textheight}{-1cm}

\newcommand{\pd}[2]{\frac {\partial #1}{\partial #2}}

\newcommand{\al}{\alpha}
\newcommand{\bb}{\beta}
\newcommand{\la}{\lambda}
\newcommand{\La}{\Lambda}
\newcommand{\oo}{\omega}

\newcommand{\dd}{\delta}
\newcommand{\Na}{\nabla}

\def\ga{\gamma}

\newcommand{\ee}{\epsilon}

\newcommand{\Si}{\Sigma}

\newcommand{\beq}{\begin{equation}}
\newcommand{\eeq}{\end{equation}}
\newcommand{\beqs}{\begin{eqnarray*}}
\newcommand{\eeqs}{\end{eqnarray*}}
\newcommand{\beqn}{\begin{eqnarray}}
\newcommand{\eeqn}{\end{eqnarray}}
\newcommand{\beqa}{\begin{array}}
\newcommand{\eeqa}{\end{array}}

\def\p{\partial}

\def\RR{{\mathbb R}}

\def\PP{{\mathbb P}}
\def\CC{{\mathbb C}}

\def\TT{{\mathbb T}}
\def\ri{\rightarrow}

\def\un{\underline}

\def\vol{{\rm Vol}}

\def\Re{{\rm Re}}
\def\Im{{\rm Im}}

\def\cA{{\mathcal A}}

\def\cB{{\mathcal B}}

\def\n{{\nonumber}}
\newtheorem{prop}{Proposition}[section]
\newtheorem{theo}[prop]{Theorem}
\newtheorem{lem}[prop]{Lemma}

\newtheorem{rem}[prop]{Remark}

\newtheorem{defi}[prop]{Definition}

\title{Convergence of Lagrangian mean curvature flow in K\"ahler-Einstein
manifolds}
\author{Haozhao Li\footnote{Research supported in part by National Science Foundation
of China No. 11001080 and a startup funding from University of
Science and Technology of China.
 }}

\begin{document}
\bibliographystyle{plain}
\date{}
\maketitle

\tableofcontents

%\clearpage

\section{Introduction}
    The problem on Lagrangian submanifolds in Calabi-Yau manifolds or general K\"ahler manifolds
has been the subject of intense study over the last few decades. They are important both
in mathematics and physics because
minimal Lagrangian submanifolds in Calabi-Yau manifolds are related to T-duality and Mirror symmetry in physics
in the fundamental paper \cite{[SYZ]}. However, to construct a minimal Lagrangian submanifold is very
difficult. Here we will use Lagrangian mean curvature flow to give some sufficient conditions for the existence
of minimal Lagrangian submanifolds in a general K\"ahler-Einstein manifold.

\medskip
A mean curvature flow is called Lagrangian mean curvature flow if the initial submanifold is Lagrangian. It is
proved by \cite{[S1]} that the property of Lagrangian is preserved along the mean curvature flow. Thus,
it is possible to use the flow method to construct minimal Lagrangian submanifolds. A natural question is how
to analyze the long time behavior or singularities along the Lagrangian mean curvature flow. In \cite{[TY]}, Thomas-Yau conjectured that under
some stability conditions the Lagrangian mean curvature flow exists for all time and converges to a special
Lagrangian submanifold in its hamiltonian deformation class. There are several
results  relevant to this conjecture. In \cite{[S3]}\cite{[SW]} Smoczyk and Smoczyk-Wang
  proved the long time existence and convergence of the Lagrangian mean curvature flow into a flat space
 under some convexity conditions respectively, and in \cite{[ACH]} Chau-Chen-He
 studied the flow of entire Lagrangian graph with Lipschitz continuous initial data. In \cite{[W]}, M. T. Wang also proved the convergence  for the graph
of a symplectomorphism between Riemann surfaces. However, the flow will develop finite time singularities in
general, and the readers are referred to \cite{[W0]} \cite{[CL3]}\cite{[N1]}\cite{[N2]}\cite{[HL2]} and
references therein.

\medskip

In this paper, we will consider the Lagrangian mean curvature flow in a
general K\"ahler-Einstein manifold with arbitrary
dimension under some stability conditions. Let $(M, \bar g)$ be a complete K\"ahler-Einstein manifold
 with scalar curvature $\bar R$ and
\beq K_5=\sum_{i=0}^5\sup_{\bar M}\;|\bar \Na^i\bar Rm|<\infty, \quad inj(M)\geq \iota_0>0,\label{eq:00}\eeq
where $inj(M)$ is the injectivity radius of $(M, \bar g).$
The first main result is

\begin{theo}\label{main}Let $(M, \bar g)$ be a complete K\"ahler-Einstein manifold satisfying (\ref{eq:00})
 with scalar curvature $\bar R<0$,
and $L$ be a compact Lagrangian submanifold smoothly immersed in
$M$. For any $ V_0, \La_0>0$, there exists $\ee_0=\ee_0(V_0, \La_0,
\bar R, K_5, \iota_0)>0$ such that if $L$ satisfies \beq \vol(L)\leq
V_0, \quad |A|\leq \La_0,\quad \int_L\;|H|^2\leq
\ee_0,\label{main01:eq1}\eeq where $A$ is the second fundamental
form of $L$ in $M$ and $H$ is the mean curvature vector,
 then the Lagrangian mean  curvature flow with the initial data $L$ will converge exponentially fast to a minimal
Lagrangian submanifold in $M.$
\end{theo}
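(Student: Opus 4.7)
The plan is to run a continuity/bootstrap argument built around three \emph{a~priori} estimates propagated along the flow: uniform volume, a uniform bound on the second fundamental form, and strict exponential decay of the $L^2$-norm of $H$. Short-time existence of LMCF from compact smooth initial data is standard, so let $T^\ast$ be the supremum of times $t$ on which $\vol(L_s)\leq 2V_0$, $|A_{L_s}|\leq 2\La_0$, and $\int_{L_s}|H|^2\leq 2\ee_0$ all hold for every $s\in[0,t]$. I would show that for $\ee_0$ small (depending on $V_0,\La_0,\bar R,K_5,\iota_0$) each of these three bounds is strictly improved on $[0,T^\ast]$, forcing $T^\ast=\infty$, and then extract smooth convergence from the resulting decay.

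The decisive ingredient is a differential inequality $\frac{d}{dt}\int_{L_t}|H|^2\leq -\delta\int_{L_t}|H|^2$ for some $\delta>0$. This exploits the K\"ahler-Einstein structure in two ways. First, on a Lagrangian immersed in a K\"ahler-Einstein manifold the $1$-form $\omega_H:=\bar g(JH,\cdot)|_L$ is closed (Dazord--Smoczyk), which together with $\bar{Rc}=\frac{\bar R}{2n}\bar g$ forces the ambient Ricci contribution to enter the standard heat-type evolution
\[
\Bigl(\tfrac{\partial}{\partial t}-\Delta\Bigr)|H|^2=-2|\nabla H|^2+\tfrac{\bar R}{n}|H|^2+\cR(A,H,\bar{Rm})
\]
with the clean scalar coefficient $\bar R/n$. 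Second, $\bar R<0$ gives this term the favorable sign. The remainder $\cR$ is at least quadratic in $H$ with coefficients controlled by $|A|\leq 2\La_0$ and $K_5$; by interpolation against the smallness $\int|H|^2\leq 2\ee_0$, it is absorbed into the $|\nabla H|^2$ and Ricci terms, yielding the desired exponential decay. The volume bound is then improved automatically since $\frac{d}{dt}\vol(L_t)=-\int|H|^2$.

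To close the bootstrap on $|A|$ I would combine Huisken's monotonicity formula in the ambient manifold (with error terms controlled by $K_5$ and $\iota_0$) with a pseudolocality/$\ee$-regularity argument: smallness of the local $L^2$-energy of $H$ together with the boundedness of $|A|$ prevents concentration of the Gaussian density, and White/Ecker-type estimates then give uniform $C^{k,\alpha}$ control on $L_t$ in terms of the hypotheses, strictly improving $|A|\leq \tfrac{3}{2}\La_0$. Feeding these pointwise bounds back and combining them with the $L^2$ exponential decay of $H$ and interpolation upgrades convergence to $C^\infty$, producing a smooth limit immersion $L_\infty$ with $H\equiv 0$ and exponential convergence $L_t\to L_\infty$.

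The principal obstacle is the $|A|$-bootstrap in this curved ambient setting: in Euclidean space the monotonicity formula and Brakke-type regularity are classical, but here one must carefully track the contribution of $\bar{Rm}$ and its derivatives through the evolution of $|A|^2$, using a Moser iteration adapted to the K\"ahler-Einstein background, so that the small parameter $\ee_0$ is not swallowed by ambient curvature error terms. A secondary subtlety is that in the K\"ahler-Einstein (not Calabi-Yau) setting there is no globally defined Lagrangian angle, so the closedness of $\omega_H$ must be exploited through its cohomology class rather than a primitive; this is precisely where $\bar R<0$ becomes essential for the sign of the decay.
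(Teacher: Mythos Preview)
Your bootstrap outline is the right shape, but the heart of the argument --- the differential inequality for $\int_{L_t}|H|^2$ --- has a genuine gap. In the evolution of $|H|^2$ the term $2H^iH^jh^j_{lm}h^i_{ml}=2\sum_{l,m}\langle A(e_l,e_m),H\rangle^2$ appears; it is nonnegative and of size $|A|^2|H|^2$. Under your standing bound $|A|\le 2\La_0$ this contributes $C\La_0^2\int|H|^2$ on the right, and no amount of ``interpolation against smallness of $\int|H|^2$'' makes that coefficient small: the smallness of $\ee_0$ does not touch $\La_0$. The paper kills this term by a Bochner/Reilly-type integral identity (its inequality (3.5)): for any tangent vector field $X$,
\[
\int_L |\nabla X|^2 - h^l_{km}h^i_{km}X^iX^l \;\ge\; \int_L |\operatorname{div}X|^2 - H^m h^m_{il}X^iX^l - \bar R_{kikl}X^iX^l,
\]
applied with $X=H$. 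This trades the $|A|^2|H|^2$ term against $\int|\nabla H|^2$, leaving only $\int 2\bar R_{ij}H^iH^j - 2H^iH^jH^kh^i_{jk}$ plus a good $-2\int(\operatorname{div}H)^2$. The first piece gives exactly your $\frac{\bar R}{n}\int|H|^2$; the second is cubic in $H$ and costs $2\La_0\,|H|_{L^\infty}\int|H|^2$, which is small only once you have \emph{pointwise} smallness of $H$. This is why the paper's bootstrap tracks $|H|_{L^\infty}$ (and $\kappa$-noncollapsing), not just $\int|H|^2$: the $L^2\to C^0$ step is done by an elementary interpolation (its Lemma~3.7) requiring a lower volume bound on small balls and a bound on $|\nabla H|$ from Shi-type estimates.

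Your plan for the $|A|$-bound via monotonicity/pseudolocality is also working much harder than necessary. In the Lagrangian setting the evolution of $A$ is $\partial_t h^k_{ij}=\nabla_i\nabla_j H^k + A\ast A\ast H + \bar{Rm}\ast H$ --- crucially, there is \emph{no} $A^3$ term once you use the full symmetry $h^k_{ij}=h^i_{kj}$. Hence $\partial_t|A|\le |\nabla^2 H|+C(|A|^2+K_0)|H|$, and the bootstrap closes by showing $|\nabla^2 H|$ decays exponentially (again via the $L^2\to C^0$ trick applied to $\int|\nabla^2H|^2\le\int|H||\nabla^4 H|$) and integrating in time. This replaces your proposed $\ee$-regularity/Moser iteration entirely.

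Finally, a small correction: for Theorem~\ref{main} the paper never invokes a Lagrangian angle or the cohomology of $\alpha_H$; the closedness of $\alpha_H$ and the angle enter only in the nonnegative-scalar-curvature theorems. The sign $\bar R<0$ enters purely through the Ricci term after the Bochner identity.
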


\medskip

Here we need to assume the scalar curvature of the ambient
K\"ahler-Einstein manifold is negative because any minimal
Lagrangian submanifold is strictly stable in this situation. Thus,
it is natural to expect that for any small perturbation of a minimal
Lagrangian submanifold, the Lagrangian mean curvature flow will
exist for all time and deform it to a minimal Lagrangian
submanifold. Theorem \ref{main} shows that this is indeed true, but
we don't need to assume the existence of  minimal Lagrangian
submanifolds.
\medskip

For a K\"ahler-Einstein manifold with nonnegative scalar curvature, we have the result:

\begin{theo}\label{main2}
Let $(M, \bar g)$ be a complete K\"ahler-Einstein manifold satisfying (\ref{eq:00}) with scalar
curvature $\bar R\geq 0$, and $L$ be a compact Lagrangian
submanifold smoothly immersed in $M$. For any $ V_0,
\La_0, \dd_0>0$, there exists $\ee_0=\ee_0(V_0,
\La_0,  \dd_0, \bar R,  K_5, \iota_0)>0$ such that if
\begin{enumerate}
\item  the mean curvature form of $L$ is exact,
    \item $L$ satisfies
  $$\la_1\geq \frac {\bar R}{2n}+\dd_0,\quad\vol(V)\leq V_0, \quad |A|\leq \La_0,\quad  \int_{L}\,|H|^2\leq \ee_0,$$
  where $\la_1$ is the first eigenvalue of the Laplacian operator with respect to the induced metric on $L,$
\end{enumerate}
 then the Lagrangian mean  curvature flow with the initial data $L$ will converge exponentially fast to a minimal
Lagrangian submanifold in $M.$

\end{theo}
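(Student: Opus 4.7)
I propose to follow the architecture of the proof of Theorem \ref{main}, replacing the automatic stability used there (which comes from $\bar R<0$) with the explicit spectral gap in hypothesis (2) together with the exactness hypothesis (1). In any K\"ahler-Einstein ambient the mean curvature form $\alpha_H = H\lrcorner\omega$ is closed, and because Lagrangian mean curvature flow is Hamiltonian in the K\"ahler-Einstein setting the de Rham class $[\alpha_H]$ is preserved along the flow. Hypothesis (1) therefore provides, at each time $t$, a globally defined angle potential $\theta_t$ with $d\theta_t = \alpha_{H_t}$ and $H_t = J\nabla\theta_t$; normalize it by $\int_{L_t}\theta_t\,d\mu_t=0$.

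\textbf{Energy decay from the spectral gap.} In a K\"ahler-Einstein manifold with Einstein constant $K=\bar R/(2n)$ the potential satisfies
\[\partial_t\theta = \DD\theta + K\theta + c(t),\]
where $c(t)$ is a time-dependent constant enforcing the mean-zero normalization. Using $|H|^2=|\nabla\theta|^2$ and $\partial_t(d\mu_t) = -|H|^2\,d\mu_t$, differentiation of $E(t):=\int_{L_t}\theta^2\,d\mu_t$ and integration by parts give
\[E'(t) = -2\int|\nabla\theta|^2 + 2K\int\theta^2 - \int|H|^2\theta^2.\]
The mean-zero normalization lets me apply the Poincar\'e inequality $\int|\nabla\theta|^2\geq \la_1(t)\int\theta^2$ to conclude $E'(t)\leq -2(\la_1(t)-K)E(t)$. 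So long as $\la_1(t)\geq K+\dd_0/2$ persists, $E(t)$, and hence $\int_{L_t}|H|^2$, decay exponentially.

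\textbf{Bootstrap and regularity.} The next step is a continuity argument. On any interval on which $\int_{L_t}|H|^2\leq 2\ee_0$ and $|A|\leq 2\La_0$, the short-time smoothing estimates (based on Simons-type identities and the ambient bound (\ref{eq:00}), as in Theorem \ref{main}) keep the immersion $C^\infty$-close to $L_0$; by continuity of $\la_1$ in the metric, $\la_1(t)\geq \la_1(0)-\dd_0/2\geq K+\dd_0/2$. This yields exponential decay of $E$ on the interval, and higher-order Bochner/Simons estimates starting from small $\int|H|^2$ propagate uniform $C^k$ bounds on $A$, keeping $|A|\leq 2\La_0$. The continuity argument then extends to all time, and $\int_{L_t}|H|^2\to 0$ exponentially.

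\textbf{Main obstacle.} The central difficulty --- absent in Theorem \ref{main}, where $\bar R<0$ forces $\la_1>K$ for free --- is controlling the spectral gap $\la_1(t)-K$ dynamically along the flow. The resolution rests on (a) Hamiltonian preservation of $[\alpha_H]$, so that the mean-zero potential $\theta_t$ exists globally for all $t$, and (b) Lipschitz dependence of $\la_1$ on $C^2$-small perturbations of the metric, which is guaranteed by the $L^2$-smallness of $H$ together with the uniform $C^k$-bounds on $A$. Once exponential $L^2$-decay of $H$ and uniform $C^\infty$-bounds are in hand, standard interpolation yields exponential $C^\infty$-convergence of $L_t$ to a smooth Lagrangian with $H\equiv 0$, i.e.\ a minimal Lagrangian submanifold of $M$.
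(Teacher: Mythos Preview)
Your overall architecture matches the paper's: a continuity/bootstrap argument in which exactness of $\alpha_H$ supplies a global potential $\theta$, a spectral gap drives exponential decay of the energy, and the decay in turn freezes the metric enough to preserve the spectral gap. Two technical choices differ from the paper and one of them hides a small gap.

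First, you monitor $E(t)=\int\theta^2$ and use the ordinary Poincar\'e inequality; the paper instead differentiates $\int|H|^2$ directly and uses the higher Poincar\'e inequality $\int|\Delta\theta|^2\geq\la_1\int|\nabla\theta|^2$ (this is exactly the content of Lemma~\ref{Lem:H}\,(\ref{lem:H:eq3})). The paper's route gives exponential decay of $\int|H|^2$ in one stroke. Your ``$E(t)$, and hence $\int|H|^2$'' is not automatic: from $E'\leq -\dd E$ you only get $\int_0^\infty\!\int|H|^2<\infty$, not pointwise-in-time decay. You can recover it by the interpolation $\int|H|^2=\int|\nabla\theta|^2\leq\|\theta\|_{L^2}\|\Delta\theta\|_{L^2}$ once $|\nabla H|$ is bounded, but this should be made explicit and placed correctly inside the bootstrap (it needs the $|A|$-bound and the smoothing Lemma~\ref{lem:higer} already in force). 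Second, for the eigenvalue you invoke qualitative Lipschitz dependence of $\la_1$ on the metric; the paper instead derives an explicit differential inequality for $\la_1$ along the flow (Lemma~\ref{lem:eig}) in terms of $|A|,|H|,|\nabla H|$, which feeds cleanly into the quantitative bootstrap (Lemma~\ref{lem:main2}, property~(d)). Either approach works, but the paper's is what makes $\ee_0$ depend only on the stated constants. Finally, the paper carries out the $L^2\to L^\infty$ passage via a noncollapsing bound plus Lemma~\ref{lem:f}, and reduces Theorem~\ref{main2} to a version with a pointwise $|H|$ hypothesis (Theorem~\ref{main20}) by a short-time argument exactly as in Theorem~\ref{main}; you gesture at this with ``as in Theorem~\ref{main}'', which is fine provided you also invoke Lemma~\ref{lem:eig}(1) to keep $\la_1$ above $\bar R/2n+\dd_0/2$ during that preliminary short-time step.
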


For Lagrangian submanifolds in K\"ahler-Einstein manifolds with
positive scalar curvature, a notion of hamiltonian stability was
introduced in \cite{[Oh]} to characterize the variations of the
submanifold under hamiltonian deformations. The hamiltonian
stability is more natural than the standard stability for the case
when the scalar curvature is positive. For example, $\RR\PP^n$ and
the Clifford torus $\TT^n $ in $\CC\PP^n$ are hamiltonian stable but
not (Lagrangian) stable in the standard sense. Thus, to get a
convergence result for the Lagrangian mean curvature flow it is
natural to expect that the deformation along the flow is
hamiltonian, which is equivalent to say that the mean curvature form
along the flow is exact. Fortunately, the  exactness of the mean
curvature form is preserved along the mean curvature flow. This is
why we need the assumption $1$ in Theorem \ref{main2}.

\medskip

In \cite{[Oh]}, Y. G. Oh proved that a minimal Lagrangian submanifold is hamiltonian stable if and only if
the first eigenvalue of the Laplacian operator $\la_1\geq {\bar R}/{2n}.$ Thus, the assumption 3 of
Theorem \ref{main2} on the first eigenvalue ensures that the limit minimal Lagrangian submanifold is strictly
hamiltonian stable. Since for the well-known examples  $\RR\PP^n$ and the Clifford torus $\TT^n $ in $\CC\PP^n$
the first eigenvalue $\la_1={\bar R}/{2n}$, we can see that Theorem \ref{main2} cannot be applied.  It is interesting to know whether we have the corresponding
result in this situation. This phenomenon is similar to the K\"ahler-Ricci flow on K\"ahler manifolds with nonzero holomorphic vector fields  (cf. \cite{[CL]}).

\medskip

Before stating the third result, we introduce

\begin{defi}
A vector field $X$ is called an essential hamiltonian variation of $L$ , if
$X$ can be written as $X=J\Na f$ where $f\notin E_{\la_1}$. Here $E_{\la_1}$ is the first eigenspace
of the Laplacian operator $\Delta$ on $L$.
\end{defi}

\begin{theo}\label{main3}Let $(M, \bar g)$ be a compact K\"ahler-Einstein manifold with
$\bar R\geq 0.$ Suppose that $\phi: L\ri M$ is a compact minimal Lagrangian
submanifold with the first eigenvalue $ {\bar R}/{2n}$ and $X$ is an essential hamiltonian variation of
$L_0=\phi(L)$. Let
$\phi_s: L\ri M(s\in (-\eta, \eta))$ with $\phi_0=\phi$ be a
one-parameter family of hamiltonian deformations generated by $X$.
Then there exists
$\ee_0=\ee_0(X, L_0, M)>0$ such that if
$L_s=\phi_s(L)\subset M$ satisfying
$$\|\phi_s-\phi_0\|_{C^3}\leq\ee_0, $$
then the Lagrangian mean curvature flow with the initial Lagrangian
submanifold $L_s$ will converge exponentially fast
to a minimal Lagrangian submanifold in $M.$
\end{theo}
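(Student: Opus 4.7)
The strategy is to reduce Theorem \ref{main3} to the setting of Theorem \ref{main2} by separating out the ``null'' Jacobi directions associated with $E_{\la_1}$. The equality $\la_1=\bar R/(2n)$ prevents a direct application of Theorem \ref{main2}, but the essentialness condition $f\notin E_{\la_1}$ says that the perturbation has a nontrivial component transverse to $E_{\la_1}$, where the effective spectral gap is $\la_2-\bar R/(2n)>0$.

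\textbf{Step 1 (moduli space of minimal Lagrangians).} By Oh's characterization of the equality case in his hamiltonian stability inequality, each $f\in E_{\la_1}$ is the restriction to $L_0$ of a hamiltonian potential of a holomorphic Killing field on $M$. Integrating these Killing fields produces a smooth finite-dimensional family $\cM=\{L_0^\si\}_{\si\in\Si}$ of minimal Lagrangians through $L_0$, with $T_{L_0}\cM\cong J\Na E_{\la_1}$; by isometry invariance each $L_0^\si$ inherits the same spectral data ($\la_1=\bar R/(2n)$ and the same first eigenspace under the natural identification).

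\textbf{Step 2 (moving decomposition and flow).} For $L_s$ sufficiently $C^3$-close to $L_0$, an implicit function theorem argument in a tubular neighborhood of $\cM$ produces a representation $L_s=\exp_{L_0^{\si(s)}}(J\Na\tilde f_s)$ with $\tilde f_s\perp E_{\la_1}(L_0^{\si(s)})$. Propagate this decomposition along the Lagrangian mean curvature flow: at each time $t$ take the nearest $L_0^{\si(t)}\in\cM$ and split the hamiltonian primitive of the mean curvature form accordingly (exactness of the mean curvature form along the flow, as used in Theorem \ref{main2}, lets us work with scalar potentials). Rerunning the energy argument of Theorem \ref{main2} on the orthogonal component, with $\la_1$ replaced by $\la_2>\bar R/(2n)$, yields the key differential inequality and thus $\int_{L(t)}|H|^2\leq Ce^{-\bb t}$. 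The tangential part of the displacement is driven by the perpendicular part, so $\si(t)\to\si_\infty$ exponentially; combined with the higher regularity estimates from the proof of Theorem \ref{main2}, this gives $C^\infty$ convergence of $L(t)$ to the minimal Lagrangian $L_0^{\si_\infty}\in\cM$.

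\textbf{Main obstacle.} The principal difficulty lies in Step 1: verifying that $E_{\la_1}$ consists of \emph{integrable} Jacobi fields arising from honest ambient Killing fields, rather than obstructed infinitesimal deformations. This is where the sharp equality $\la_1=\bar R/(2n)$ is used in full strength, via Oh's analysis and the isometry group of $M$. A secondary technical difficulty is keeping the moving decomposition $L(t)=\exp_{L_0^{\si(t)}}(J\Na\tilde f_t)$ smoothly dependent on $t$ and deriving a closed evolution system for $(\si(t),\tilde f_t)$ that cleanly isolates the drift on $\cM$ from the exponentially decaying orthogonal part. Once this is in place, the estimates of Theorem \ref{main2} transfer essentially verbatim, with all constants depending only on $X$, $L_0$, and $M$, exactly as required.
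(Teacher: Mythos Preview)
Your Step 1 is a genuine gap, and you correctly flag it as the principal obstacle. The assertion that every $f\in E_{\la_1}$ is the restriction of the hamiltonian of an ambient holomorphic Killing field is not a theorem in Oh's work; Oh shows that hamiltonian stability is equivalent to $\la_1\geq\bar R/(2n)$, and that equality characterizes the null directions of the second variation, but nothing there guarantees these infinitesimal deformations integrate to a moduli of minimal Lagrangians, let alone that they arise from ambient isometries. For the specific examples in Section~7 (Clifford torus, adjoint orbits) this integrability happens to hold for structural reasons, but Theorem~\ref{main3} is stated for an arbitrary compact K\"ahler--Einstein $M$ with $\bar R\geq 0$, so your moduli space $\cM$ may not exist. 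Without $\cM$, the moving decomposition of Step 2 and the drift/orthogonal splitting collapse.

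The paper avoids this issue entirely with a linearization-plus-compactness argument that never needs to interpret $E_{\la_1}$ geometrically. The key observation is that for the one-parameter family $\phi_s$ generated by $X=J\Na f_0$, the first variation of the Lagrangian angle at $s=0$ is $\p_s\theta|_{s=0}=-\Delta_0 f_0-\frac{\bar R}{2n}f_0$, which is \emph{automatically} orthogonal to $E_{\la_1}$ (the operator $-\Delta_0-\frac{\bar R}{2n}$ kills $E_{\la_1}$), and this orthogonality is preserved under the linearized heat-type evolution $\p_t(\p_s\theta)=\Delta_0(\p_s\theta)+\frac{\bar R}{2n}(\p_s\theta)$ on the fixed $L_0$. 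Hence at the linearized level the effective eigenvalue is $\la_2>\bar R/(2n)$. A contradiction argument using compactness of mean curvature flows (Proposition~\ref{compactness}) then upgrades this to the nonlinear inequality $\int|\Delta\theta_{s,t}|^2\geq(\frac{\bar R}{2n}+\dd_0)\int|\Na\theta_{s,t}|^2$ for $s$ small, after which the machinery of Theorem~\ref{main} runs verbatim. Note this exploits that the theorem concerns only the \emph{specific} one-parameter family through $L_0$; your approach, if the moduli space existed, would prove a stronger statement about arbitrary small hamiltonian perturbations, but that stronger statement is not what is claimed.
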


\medskip
Note that we can show that a minimal Lagrangian submanifold $L$ with $\la_1=\bar R/2n$ is
 strictly hamiltonian stable along an essential hamiltonian variation $X$(cf. Lemma \ref{lem:positive}).
Theorem \ref{main3} says that the flow will exist for all time and converge if the initial Lagrangian
submanifold is a small perturbation of $L$ along  essential hamiltonian variations, which is
reasonable since $L$ is strictly hamiltonian stable along these directions.

\medskip

The idea of the proofs of Theorem \ref{main} and \ref{main2} is similar to that used in \cite{[CL]}. First, we
use the smallness of the mean curvature vector in a short time interval to get the exponential decay of
the $L^2$ norm of the mean curvature vector, which is a crucial step in the whole argument. Then, by a
simple observation(cf. Lemma \ref{lem:vol}) we can
get all higher order estimate of the second fundamental form from the decay of the $L^2$ norm of
the mean curvature. This step relies on the noncollapsing assumption
of Lagrangian submanifolds, which is a technical condition and can be removed in the proof of
 the main theorems. This step is different from the K\"ahler-Ricci flow in \cite{[CL]}\cite{[CLW]}, where we
  use the parabolic Moser iteration to get $C^0$ order estimate of the K\"ahler potential.
 Then, we can show that the exponential decay of the mean curvature vector implies
that the second fundamental form is uniformly bounded for any time interval and we can extend the
solution for all time. The readers are referred to \cite{[CL]} for more details of the argument.
\medskip

\medskip

In a forthcoming paper, we expect to extend the argument in the present paper to the case when the initial
submanifold is not Lagrangian. Our argument might be also useful for the symplectic mean curvature flow
(cf. \cite{[CL2]}\cite{[HL]}), and we will explore this in the future.

\medskip

This paper is organized as follows: In Section 2, we recall some basic facts and evolution equations of
mean curvature flow and Lagrangian submanifolds. In particular, we will give some details of the proof which will be used in the paper.
In Section 3, we will show several technical lemmas along the Lagrangian mean curvature flow. In
Section 4 and 5, we will finish the proof of Theorem \ref{main} and \ref{main2}. In Section 6, we will
recall some basic facts on the deformation of minimal Lagrangian submanifolds and finish the proof of
Theorem \ref{main3}. In the last section, we collect some examples where our theorems can be applied.

\bigskip

{\bf Acknowledgements}: The author  would like to thank  Professor X. X. Chen, W. Y. Ding and F. Pacard for their constant,
warm encouragements over the past several years. We would also like to thank W. Y. He for numerous suggestions
which helped to improve the whole paper.

\section{Notations and preliminaries}
In this section, we recall some evolution equations from \cite{[CL2]}
for the mean curvature flow in arbitrary dimension and codimension,
and then we discuss  the special case of Lagrangian mean
curvature flow.

Let $(M, \bar g)$ be a $m$-dimensional Riemannian manifold and $F_0:
L\ri M$ be a smoothly immersed submanifold with dimension $n.$ We
consider the a one-parameter family of smooth maps $F_t: L\ri M$
with the image $L_t=F_t(L)$ smooth submanifold in $M$ and $F$
satisfies \beq \pd {}tF(x, t)=H(x, t),\quad F(x,
0)=F_0(x).\label{MCF}\eeq Here $H(x, t)$ is the mean curvature
vector of $L_t$ at $F(x, t)$ in $M.$ Choose a local orthonormal
frame $e_1, \cdots, e_n, e_{n+1}, \cdots, e_m$ of $M$ along $L_t$
such that $e_1, \cdots, e_n$ are tangent vectors of $L_t$ and
$e_{n+1}, \cdots, e_m$ are in the normal bundle over $L_t.$ The
second fundamental form and the mean curvature operator are given by
$$A=A^{\al}e_{\al},\quad H=-H^{\al}e_{\al},$$
where $\al\in \{n+1, \cdots, m\}$. Let $A^{\al}=(h^{\al}_{ij})$
where $(h_{ij}^{\al})$ is a matrix given by
$$h_{ij}^{\al}=\bar g(\bar \Na_{e_i} e_{\al}, e_j)=\bar g(\bar \Na_{e_j}e_{\al}, e_i)=h_{ji}^{\al}$$
where $\bar \Na$ is the Levi-Civita connection on $M.$ The mean
curvature $H^{\al}=g^{ij}h_{ij}^{\al},$ where $g_{ij}=\bar g(e_i,
e_j)$ is the induced metric on $L$. By direct calculation we have
the evolution equation of the induced metric
$$\pd {}tg_{ij}=-2H^{\al}h^{\al}_{ij}.$$

\medskip

With these notations, we have the evolution equations of the second fundamental form and the mean
curvature vector.

\begin{lem}\label{lem:AH}(cf. \cite{[CL2]}) The evolution equation of the second fundamental form is
given by \beqn \pd {}th_{ij}^{\al}
&=&\Na_i\Na_jH^{\al}-H^{\ga}h_{jl}^{\ga}h_{il}^{\al}+
h_{ij}^{\bb}b_{\bb}^{\al}+H^{\bb}\bar R_{\al j\bb
i}\label{lem:AH:eq6}
\\&=&\Delta
h_{ij}^{\al}+h_{il}^{\bb}h_{ml}^{\bb}h_{mj}^{\al}-H^{\bb}(h_{mi}^{\bb}h_{mj}^{\al}+
h_{mj}^{\bb}h_{mi}^{\al})+h_{ij}^{\bb}h_{ml}^{\bb}h_{ml}^{\al}-h_{im}^{\bb}h_{jl}^{\bb}h_{ml}^{\al}\nonumber
\\
&&-(\bar \Na_l \bar R_{\al jil}+\bar \Na_i \bar R_{\al ljl})-(\bar
R_{\bb\al jl}h_{il}^{\bb}+\bar R_{\al\bb il}h_{jl}^{\bb}) +(\bar
R_{mllj}h_{im}^{\al}+\bar R_{illm}h_{jm}^{\al})\nonumber
\\&&+2\bar
R_{iljm}h_{ml}^{\al}+\bar R_{\al l\bb l}h_{ij}^{\bb}+
h_{ij}^{\bb}b_{\bb}^{\al}, \label{lem:AH:eq5}\eeqn
where $b_{\al}^{\bb}=\bar g(\pd {}{t}e_{\al}, e_{\bb}).$  The equation of
mean curvature vector is given by \beq \pd {}tH^{\al}=\Delta
H^{\al}+H^{\bb}h_{lm}^{\bb}h_{ml}^{\al}+H^{\bb}\bar R_{\al l \bb
l}+H^{\bb}b_{\bb}^{\al}.\label{eq:H}\eeq Here $\bar R_{ABCD}$ is the
curvature tensor in $M$ and we choose the convention such that $\bar
R_{uvuv}>0$ for round spheres.
\end{lem}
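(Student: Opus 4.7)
The plan is to derive both evolution equations by direct computation in a local orthonormal frame $\{e_1,\ldots,e_n,e_{n+1},\ldots,e_m\}$ adapted to $L_t$, using the definition of the flow (\ref{MCF}) together with the standard identities (Codazzi, Ricci, Gauss) for submanifolds of an ambient Riemannian manifold. The preliminary computation, already recorded above the lemma, is $\partial_tg_{ij}=-2H^\al h^\al_{ij}$; it follows from $[\partial_t,\partial_i]=0$ and the ensuing identity $\bar\Na_{\partial_t}F_*\partial_i=\bar\Na_iH$ after separating tangential and normal components.

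To obtain the compact form (\ref{lem:AH:eq6}), I would differentiate the identity $h^\al_{ij}=\bar g(\bar\Na_{e_i}e_\al,e_j)$ in $t$. Three sources of time dependence appear: the motion of the normal frame $\partial_te_\al=b^\al_\bb e_\bb+(\text{tangential})$, which produces $h^\bb_{ij}b^\al_\bb$; the motion of the tangent frame $e_i,e_j$, whose normal projections are determined by $H$ and $A$ and which yields the mixed term $-H^\ga h^\ga_{jl}h^\al_{il}$; and the curvature commutator
\beq
\bar\Na_{\partial_t}\bar\Na_{e_i}e_\al-\bar\Na_{e_i}\bar\Na_{\partial_t}e_\al=\bar R(H,e_i)e_\al,
\eeq
whose normal component becomes $H^\bb\bar R_{\al j\bb i}$ after pairing with $e_j$. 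Combining these contributions with the Weingarten relation and projecting onto $e_j$ leaves $\Na_i\Na_jH^\al$ as the purely second-order term, giving (\ref{lem:AH:eq6}). The main technical obstacle here is careful bookkeeping of tangential versus normal components; the orthonormal frame makes tangential rotations in $\partial_te_i$ cancel against Christoffel symbols, which keeps the final expression clean.

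To pass from (\ref{lem:AH:eq6}) to the expanded form (\ref{lem:AH:eq5}), I would exchange $\Na_i\Na_jH^\al$ for $\Delta h^\al_{ij}$ via the Codazzi equation
\beq
\Na_kh^\al_{ij}-\Na_ih^\al_{kj}=\bar R_{\al jik},
\eeq
together with the Ricci identity in the normal bundle (to commute $\Na$ applied to sections of the normal bundle) and the Gauss equation (to rewrite the intrinsic curvature of $L_t$ in terms of $h$ and $\bar R$). The commutation generates the cubic-in-$h$ terms, the mixed $\bar R\cdot h$ terms, and the derivatives $\bar\Na_l\bar R_{\al jil}$ and $\bar\Na_i\bar R_{\al ljl}$ that appear when a derivative is applied to the Codazzi identity. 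For (\ref{eq:H}) I would trace (\ref{lem:AH:eq6}) with $g^{ij}$ and use the corresponding evolution of $g^{ij}$: the principal term $g^{ij}\Na_i\Na_jH^\al$ is the rough Laplacian $\Delta H^\al$ on the normal bundle; the curvature piece traces to $H^\bb\bar R_{\al l\bb l}$; the $H\cdot h\cdot h$ contributions coming from $\partial_tg^{ij}$ and from $-H^\ga h^\ga_{jl}h^\al_{il}$ combine after relabeling to give $H^\bb h^\bb_{lm}h^\al_{ml}$; and the frame term traces to $H^\bb b^\al_\bb$. The principal difficulty throughout is disciplined sign and index bookkeeping, in particular maintaining the convention $\bar R_{uvuv}>0$ for round spheres so that all signs in the final formulas come out as stated.
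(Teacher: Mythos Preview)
Your proposal is correct and follows the same approach as the paper. The paper's own proof simply cites Lemma~2.3--2.5 and Proposition~2.6 of \cite{[CL2]} for (\ref{lem:AH:eq6}) and (\ref{lem:AH:eq5}) and then obtains (\ref{eq:H}) by tracing (\ref{lem:AH:eq6}) with $g^{ij}$; your sketch is precisely the direct computation carried out in that reference, and your derivation of (\ref{eq:H}) matches the paper's.
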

\begin{proof}The equations (\ref{lem:AH:eq6})(\ref{lem:AH:eq5}) follow directly from Lemma 2.3-2.5 and
Proposition 2.6 in \cite{[CL2]}, and (\ref{eq:H}) follows from (\ref{lem:AH:eq6}) and the definition of $H.$

\end{proof}

\bigskip

Now we recall some basic facts of Lagrangian mean curvature flow
from \cite{[S1]}-\cite{[S4]}. Assume that $(M, \bar g, J)$ is a
K\"ahler-Einstein manifold of real dimension $m=2n,$ and $L$ is an
$n$-dimensional manifold smoothly immersed into $M$ by a smooth map
$F: L\ri M.$ Let $\bar \oo$ be the associate K\"ahler form of the metric $\bar
g$. The submanifold $L_0=F(L)\subset M$ is called Lagrangian if
$$F^*\bar \oo=0,\quad \hbox{on } L.$$ Choose normal coordinates
$\{x^i\}$ for $L$ and we have that $e_i=\p_iF$  are the tangent
vectors of $L.$ Since $L$ is Lagrangian, $Je_i$ is a normal vector
for any $i=1, \cdots, n.$ In fact,
$$\bar g(Je_i, e_j)=\bar \oo(e_i, e_j)=0.$$
Hence, $\{e_i, Je_i\}$ is a local coordinate frame of $M.$ For
convenience, we use that an underlined index denotes the application
of the complex structure $J$. For example
$$\bar g_{i\un j}=\bar g(e_i, Je_j).$$
For simplicity, we denote by $h^{k}_{ij}$  the second fundamental
form $h^{\un k}_{ij}=-\bar g(Je_k, \bar \Na_{e_i}e_j)$. Since $L$ is
Lagrangian, it is easy to check that the second fundamental form has
full symmetry
$$h^{k}_{ij}=h^{k}_{ji}=h^{i}_{kj}.$$
The mean curvature vector $H=-H^iJe_i$ where $H^i=g^{kl}h^i_{kl}.$
The norms of
the second fundamental form and the mean curvature vector are given by
$$|A|^2=h^k_{ij}h^{l}_{pq}g^{ip}g^{jq}g_{kl},\quad |H|^2=H^iH^jg_{ij}.$$
We define the mean curvature form by $\al_H=g_{ij}H^jdx^i$, and we have the
following well-known result:

\begin{lem}If $M$ is a K\"ahler-Einstein manifold, then $\al_H$ is a closed
1-form. \end{lem}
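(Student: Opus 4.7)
The plan is to prove this via the classical Dazord identity that, for a Lagrangian submanifold $L$ in any K\"ahler manifold $M$, the exterior derivative of the mean curvature form equals the pull-back to $L$ of the ambient Ricci form, and then to kill that right-hand side using the Einstein condition.

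First I would re-express $\alpha_H$ intrinsically as the pull-back of the interior contraction $\iota_H\bar\oo$. Since $\bar\oo(Je_i,e_j)=\bar g(J^2e_i,e_j)=-g_{ij}$ and $H=-H^iJe_i$, one checks directly that $\alpha_H(Y)=\bar\oo(H,Y)$ for every tangent vector $Y$ of $L$. Thus $\alpha_H=F^*(\iota_H\bar\oo)$, which is the correct geometric object to differentiate.

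Next I would establish the Dazord formula $d\alpha_H=F^*\rho$, where $\rho(\cdot,\cdot)=\overline{\mathrm{Ric}}(J\cdot,\cdot)$ denotes the Ricci form of $M$. In normal coordinates on $L$ at a point, I would compute $d\alpha_H(e_i,e_j)=\Na_iH_j-\Na_jH_i$, substitute $H_j=g^{kl}h^j_{kl}$, and apply the Codazzi equation together with the full symmetry $h^k_{ij}=h^i_{kj}$ granted by the Lagrangian hypothesis and $\bar\Na J=0$. The cubic self-interactions of the second fundamental form cancel after relabelling by this symmetry, and the Codazzi contraction collapses to the ambient Ricci tensor contracted with $J$, which is precisely $\rho$ restricted to $L$.

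Finally, the Einstein condition says $\rho=\frac{\bar R}{2n}\bar\oo$ as $2$-forms on $M$, and the Lagrangian condition gives $F^*\bar\oo=0$; hence $F^*\rho=0$ and $d\alpha_H=0$. The main obstacle is the derivation of Dazord's identity, where the bookkeeping of the symmetries of $h^k_{ij}$ and the clean use of $\bar\Na J=0$ to commute $J$ past covariant derivatives have to be handled carefully; once that identity is in hand, the Einstein-plus-Lagrangian combination closes the argument in one line.
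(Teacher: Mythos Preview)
Your approach is essentially the same as the paper's: compute $d\alpha_H(e_i,e_j)=\nabla_iH_j-\nabla_jH_i$ in normal coordinates, reduce via the Codazzi equation and the full symmetry $h^k_{ij}=h^i_{kj}$ to an ambient curvature term (the paper writes it as $\bar R_{jik\underline{k}}$, which is exactly the Ricci form evaluated on $e_i,e_j$), and then kill it using the Einstein condition $\rho=\frac{\bar R}{2n}\bar\omega$ together with $F^*\bar\omega=0$. One small remark: in this direct computation no ``cubic self-interactions'' of the second fundamental form actually appear---the Codazzi equation applied to $\nabla_i h^k_{kj}-\nabla_j h^k_{ki}$ yields the curvature term immediately---so that part of your outline is superfluous, though harmless.
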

\begin{proof}By the full symmetry of $h^k_{ij}$, we have
 \beq d\al_H(e_i, e_j)=\Na_iH^j-\Na_jH^i\n=\Na_i h^k_{kj}-\Na_jh^k_{ki}=\bar R_{jik\un k}, \label{eq:H1}\eeq
where we have used the Codazzi equation
$$\Na_i h^l_{jk}-\Na_jh_{ik}^l=-\bar R_{\un lkji}.$$
Since $M$ is a K\"ahler-Einstein manifold, by equality (\ref{eq:H2}) below we have
$$d\al_H(e_i, e_j)=\bar R_{\un j i}=\frac {\bar R}{2n}\bar \oo(e_j, e_i)=0,\quad \hbox{on}\; L,$$
since $L$ is Lagrangian. The lemma is proved.
\end{proof}

\medskip

For the mean curvature flow (\ref{MCF}), if the initial data $L_0$
is Lagrangian, then the submanifolds $L_t$ are all Lagrangian (cf. \cite{[S1]}). Thus, we call the flow (\ref{MCF}) the Lagrangian mean
curvature flow if the initial submanifold $L_0$ is Lagrangian. It was
proved by \cite{[S4]} that the exactness of
the mean curvature form of $L_t$ is preserved along the Lagrangian
mean curvature flow. \medskip

\begin{lem}\label{lem:LH}Along the Lagrangian mean curvature flow, we have
\beq
 \pd {}tH^{i}=\Delta H^{i}+H^{j}h_{lm}^{j}h_{ml}^{i}+H^{j}\bar R_{\un i l \un j l}-H^{j}H^kh^i_{kj}.\label{eq:LH}
\eeq
The second fundamental form satisfies
\beq
\pd {}th_{ij}^{k}
=\Na_i\Na_jH^{k}-H^{m}h_{jl}^{m}h_{il}^{k}-H^lh^l_{mk}
h_{ij}^{m}+H^{m}\bar R_{\un k j\un m
i}.\quad \label{eq:LH2}
\eeq
\end{lem}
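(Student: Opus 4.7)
The plan is to derive Lemma~\ref{lem:LH} as a direct specialization of Lemma~\ref{lem:AH} once the normal bundle of $L_t$ is identified with its tangent bundle via the complex structure $J$. Because $L_t$ is Lagrangian, the vectors $e_{\un k}:=Je_k$ form an orthonormal frame for the normal bundle, so an underlined normal index may be replaced everywhere by the corresponding tangent index; in particular $h^{\al}_{ij}$ with $\al=\un k$ becomes $h^{k}_{ij}$, $H^{\al}$ becomes $H^k$, and the curvature factor $\bar R_{\al j\bb i}$ becomes $\bar R_{\un k j\un m i}$. The only non-obvious quantity in Lemma~\ref{lem:AH} after this substitution is the frame-rotation coefficient $b^{\al}_{\bb}$.

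To compute $b^{\un k}_{\un m}$ I would use that $F$ satisfies (\ref{MCF}) and that $[\partial_t,\partial_i]=0$ as vector fields on $L\times[0,T]$, so $\bar\Na_{\partial_t}e_i=\bar\Na_{e_i}H$. Since $M$ is K\"ahler, $\bar\Na J=0$, hence $\partial_t(Je_i)=J\bar\Na_{e_i}H$. Writing $H=-H^jJe_j$, using $J^2=-I$ and the decomposition $\bar\Na_{e_i}e_j=\Ga^k_{ij}e_k-h^{l}_{ij}Je_l$ (which in turn follows from the definition of $h^k_{ij}$ and the Lagrangian condition), a short calculation yields
\[
\partial_t(Je_i)=(\Na_iH^j)e_j-H^jh^{l}_{ij}Je_l,
\]
so that $b^{\un k}_{\un m}=\bar g(\partial_t(Je_m),Je_k)=-H^jh^{k}_{mj}$.

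With this in hand, substituting $\al=\un k$, $\bb=\un p$ in (\ref{lem:AH:eq6}) gives
\[
\tfrac{\partial}{\partial t}h^{k}_{ij}=\Na_i\Na_jH^{k}-H^{q}h^{q}_{jl}h^{k}_{il}-H^{m}h^{p}_{ij}h^{k}_{pm}+H^{p}\bar R_{\un k j\un p i},
\]
and the full symmetry $h^{k}_{pm}=h^{m}_{pk}$ rewrites the cubic term as $-H^{l}h^{l}_{mk}h^{m}_{ij}$, producing (\ref{eq:LH2}). An identical specialization of (\ref{eq:H}), with $H^{\bb}b^{\al}_{\bb}=-H^jH^kh^{i}_{kj}$, yields (\ref{eq:LH}). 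The main obstacle—essentially the only real calculation—is getting $b^{\un k}_{\un m}$ right, including signs; after that everything reduces to renaming indices and applying the full symmetry of $h^{k}_{ij}$, and no further use of the Einstein condition is required here (that was already consumed in the closedness of $\al_H$ above).
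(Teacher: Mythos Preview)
Your proposal is correct and follows exactly the paper's own argument: compute the frame-rotation coefficient $b^{\un k}_{\un m}=-H^jh^{k}_{mj}$ using the K\"ahler condition $\bar\Na J=0$ and the Lagrangian identification of the normal bundle, then substitute directly into the general equations (\ref{lem:AH:eq6}) and (\ref{eq:H}) of Lemma~\ref{lem:AH}. The paper's proof is merely a one-line version of your computation of $b^{\un i}_{\un j}$, and the rest is, as you say, index renaming together with the full symmetry of $h^{k}_{ij}$.
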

\begin{proof}Since $L_t$ is Lagrangian along the flow, we have
\beq b^{\un i}_{\un j}=\bar g(\pd {}te_{\un i}, e_{\un j})=\bar g(J\Na_{e_i}(-H^ke_{\un k}), e_{\un j})=-H^kh^{k}_{ij}.\label{eq:LH3}\eeq
By (\ref{eq:H}), we have \beq
 \pd {}tH^{i}=\Delta H^{i}+H^{j}h_{lm}^{j}h_{ml}^{i}+H^{j}\bar R_{\un i l \un j
 l}-H^{j}H^kh^k_{ij}.\n
\eeq
The equation (\ref{eq:LH2}) follows directly from (\ref{eq:LH3}) and Lemma \ref{lem:AH}.

\end{proof}
\medskip

\begin{lem}\label{lem:angle}(cf. \cite{[S1]}) If the initial mean curvature form is exact, then there
exists a smooth angle function $\theta(x, t)$ such that
$\al_H=d\theta$ and \beq \pd {\theta}t=\Delta \theta+\frac {\bar
R}{2n}\theta. \label{eq:angle}\eeq

\end{lem}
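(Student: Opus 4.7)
The plan is to construct $\theta$ directly as the solution of a linear parabolic equation on the evolving manifold, and then verify that $d\theta$ coincides with $\al_H$ throughout the flow. Since $\al_H(\cdot,t)$ is closed for every $t$ by the previous lemma, and exact at $t=0$ by hypothesis, we write $\al_H(\cdot,0)=d\theta_0$ for some smooth $\theta_0$ on $L$, and let $\theta(x,t)$ be the unique smooth solution of
$$\pd{\theta}{t}=\Delta_{g(t)}\theta+\frac{\bar R}{2n}\theta$$
on $L$ with initial data $\theta_0$, where $g(t)$ is the induced metric on $L_t$. Existence, uniqueness and smoothness follow from standard linear parabolic theory applied to the smoothly evolving family $(L,g(t))$.

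Next, set $\beta:=\al_H-d\theta$, a family of smooth 1-forms on $L$ with $\beta(\cdot,0)=0$. The remaining task is to show $\beta\equiv 0$, which I would do by deriving a linear homogeneous evolution equation for $\beta$ and appealing to uniqueness of the zero solution. To this end, I would compute $\pd{H_i}{t}$ from $H_i=g_{ij}H^j$ using $\pd{g_{ij}}{t}=-2H^kh^k_{ij}$ together with the equation for $\pd{H^i}{t}$ in Lemma \ref{lem:LH}. The full symmetry $h^k_{ij}=h^j_{ik}$ collapses the cubic $h$-terms into an expression of the form $\Na_i(\text{scalar})$; a Bochner identity on 1-forms rewrites the rough Laplacian $\Delta H_i$ as $\Na_i(\delta\al_H)$ up to a Ricci term computed via the Gauss equation and the Einstein condition; and the ambient curvature contribution $H^j\bar R_{\un i l\un j l}$ yields $\frac{\bar R}{2n}H_i$ after the K\"ahler-Einstein identity $\bar R_{AB}=\frac{\bar R}{2n}\bar g_{AB}$ is applied. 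Comparing with $\pd{}{t}(\p_i\theta)=\Na_i(\Delta\theta+\frac{\bar R}{2n}\theta)$ and using $\Delta\theta=-\delta d\theta=-\delta\al_H+\delta\beta$, the result is an equation of the form $\pd{\beta_i}{t}=\frac{\bar R}{2n}\beta_i+L(\beta)$ for some smooth linear operator $L$ acting on $\beta$.

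The hard part will be the curvature bookkeeping needed to put the right-hand side of $\pd{H_i}{t}$ into precisely this form: matching the cubic $h$-terms from $\pd{g_{ij}}{t}H^j$ against those from $g_{ij}\pd{H^j}{t}$ after full symmetrization, and recognizing $H^j\bar R_{\un i l\un j l}$ as $\frac{\bar R}{2n}H_i$ up to $\beta$-corrections. Once this algebraic reduction is carried out, $\beta\equiv 0$ follows immediately from uniqueness of solutions of a linear parabolic equation with vanishing initial data on the compact manifold $L$, which gives $\al_H=d\theta$ together with the stated evolution equation for $\theta$.
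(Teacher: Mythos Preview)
Your approach is correct but genuinely different from the paper's. The paper proceeds in the opposite order: it first invokes \cite{[S1]} to assert that exactness of $\al_H$ is preserved along the flow, writes $H^i=\Na^i\theta$ for some $\theta$, and then derives the evolution equation for $\theta$ by combining the known equation \eqref{eq:LH} for $\pd{H^i}{t}$ with the commutator identity $\Delta\Na^i\theta=\Na^i\Delta\theta+R_{kikl}\Na^l\theta$ (plus the Gauss equation and the K\"ahler--Einstein identity). The cubic $h$-terms and the $H\!\cdot\!H\!\cdot\!h$ terms cancel by the full symmetry of $h^k_{ij}$, leaving $\Na^i\pd{\theta}{t}=\Na^i\big(\Delta\theta+\tfrac{\bar R}{2n}\theta\big)$; the stated equation follows after absorbing a time-dependent constant into $\theta$.

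Your route---solve the linear heat equation for $\theta$ first, then show $\beta=\al_H-d\theta$ vanishes---has the virtue of being self-contained (it re-proves preservation of exactness rather than citing it), and it sidesteps the normalization ambiguity in $\theta$. The cost is that you must carry out the curvature bookkeeping on the $1$-form side rather than the vector side, which is essentially the same algebra the paper does but packaged differently. One small point to make explicit in your write-up: since $\beta$ is closed (both $\al_H$ and $d\theta$ are), the operator $L$ you obtain is only $dd^*$ at top order, not the full Hodge Laplacian; you should either note that $d^*d\beta=0$ can be added freely to make the equation genuinely parabolic, or close via the energy estimate $\tfrac{d}{dt}\|\beta\|_{L^2}^2\le C\|\beta\|_{L^2}^2$ directly, which suffices for uniqueness on the compact $L$.
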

\begin{proof}It follows from \cite{[S1]} that $\al_H(t)$ are exact as long as the solution exists if the initial mean curvature form is exact. Since $H^i=\Na^i\theta,$ and we calculate
\beqn
\Delta \Na^i \theta&=&\Na_k\Na_i\Na^k\theta=(\Na_i\Na_k\Na^k\theta+R_{kikl}\Na^l\theta)\n\\
&=&\Na^i\Delta\theta+(\bar
R_{kikl}+H^mh_{il}^m-h^m_{kl}h^m_{ki})\Na^l\theta.\label{lem:angle:eq1} \eeqn
Combining (\ref{lem:angle:eq1}) with (\ref{eq:LH}), we have
 \beqs \Na^i\pd {\theta}t&=&\Na^i\Delta \theta+(\bar R_{\un im\un l m}+\bar R_{miml})\Na^l\theta\\
 &=&\Na^i\Delta \theta+\frac {\bar R}{2n}\Na^i\theta.\eeqs
Hence, (\ref{eq:angle}) is proved.

\end{proof}

\medskip
For the readers'convenience, we collect some basic facts on curvatures in a K\"ahler manifold.
Let $(M, \bar g)$ be a K\"ahler manifold, the Ricci
curvature is given by
$$\bar R_{AC}=\bar g^{BD}\bar R_{ABCD}.$$
Here doubled latin capitals are summed from $1$ to $2n.$ Now in the
local frame $\{e_i, Je_i\}$ we calculate \beqs \bar
R_{AB}&=&\bar R(e_A, e_k, e_B, e_k)+\bar R(e_A, Je_k, e_B, Je_k)\\
&=&\bar R(e_A, e_k, Je_B, Je_k)-\bar R(e_A, Je_k, Je_B, e_k)\\
&=&\bar R(e_k, e_A, Je_k, Je_B)+\bar R(e_A, Je_k, e_k, Je_B)\\
&=&-\bar R(Je_k, e_k, e_A, Je_B)=\bar R_{A\un Bk\un k}.\eeqs Hence,
we have
\beq \bar R_{AB}=\bar R_{A\un Bk\un k},\quad \bar R_{A\un
B}=-\bar R_{ABk\un k}.\label{eq:H2}\eeq The scalar curvature $\bar R=\bar
R_{kk}+\bar R_{\un k\un k}=2\bar R_{kk}.$ Since $\bar g$ is a
K\"ahler-Einstein metric, we have $\bar R_{ij}=\frac {\bar
R}{2n}\bar g_{ij}.$

\section{Estimates}
In this section, we derive some estimates along Lagrangian mean
curvature flow.

\subsection{The mean curvature vector}\label{sec3.1}
In this subsection, we will prove that the $L^2$ norm of the mean
curvature vector decays exponentially under certain conditions. More
precisely, we will prove that the $L^2$ norm of the mean curvature
vector will decays exponentially when the mean curvature is small
and the scalar curvature of the ambient K\"ahler-Einstein manifold
is negative. For the case of nonnegative scalar curvature, an
interesting condition on the exactness of the mean curvature form is
assumed to ensure the exponential decay when the mean curvature is
small.

\begin{lem}\label{Lem:H}
Let $(M, \bar g)$ be a K\"ahler-Einstein manifold with
scalar curvature $\bar R.$  For any $\La, \ee>0$,  if the solution $L_t(t\in [0, T])$ of Lagrangian mean curvature flow satisfies
$$|A|(t)\leq \La, \quad |H|(t)\leq \ee,\quad t\in [0, T],$$
then we have the inequality
 \beq \frac d{dt}\int_{L_t}\; |H|^2d\mu_t\leq \Big(\frac 1n{\bar R}+2\La\ee\Big)\int_{L_t}\; |H|^2d\mu_t,\quad
 t\in [0, T]. \label{lem:H:eq1}\eeq
 Moreover, if we assume that the mean curvature form of $L_0$ is
 exact, then
\beqn
\pd
{}t\int_{L_t}\;|H|^2d\mu_t
&\leq& -2\Big(\la_1-\frac {\bar R}{2n}-\La\ee\Big)\int_{L_t}\;|H|^2d\mu_t, \label{lem:H:eq3}
\eeqn
where $\la_1$ is the first eigenvalue of $\Delta$ with respect to the induced metric on $L.$

\end{lem}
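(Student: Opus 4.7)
The plan is to work with the mean curvature $1$-form $\al_H$ rather than the vector $H$, exploiting that $\al_H$ is closed for Lagrangian submanifolds of K\"ahler-Einstein manifolds (already established above). Locally write $\al_H = d\theta$; the computation in the proof of Lemma~\ref{lem:angle} is purely local and still applies, giving $\pd{\theta}{t} = \Delta\theta + \frac{\bar R}{2n}\theta + c(t)$ for some additive function of $t$ alone. Taking the exterior derivative kills $c(t)$ and, using $\Delta\theta = -\de\al_H$ on scalars, yields the globally well-defined evolution
\beqs
\pd{\al_H}{t} = -d\de\al_H + \frac{\bar R}{2n}\al_H.
\eeqs

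The second step is to differentiate $\int|H|^2\,d\mu_t = \int g^{ij}(\al_H)_i(\al_H)_j\,d\mu_t$ in time. Combining $\pd{g^{ij}}{t} = 2H^kh^k_{ab}g^{ia}g^{bj}$ (dual to $\pd{g_{ij}}{t} = -2H^kh^k_{ij}$), the volume evolution $\pd{d\mu_t}{t} = -|H|^2 d\mu_t$, and the above evolution of $\al_H$, the term arising from $2\,g^{ij}(\al_H)_i\,\pd{(\al_H)_j}{t}$ contains $2H^j\Na_j(\Na_iH^i)$, which integration by parts on $L_t$ converts into $-2\int(\Na_iH^i)^2\,d\mu_t$. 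The resulting identity should read
\beqs
\frac d{dt}\int|H|^2\,d\mu_t = \int\Big[\,2H^iH^jH^kh^k_{ij} - 2(\Na_iH^i)^2 + \frac{\bar R}{n}|H|^2 - |H|^4\,\Big]d\mu_t.
\eeqs

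The first inequality then follows by discarding the non-positive terms $-2(\Na_iH^i)^2$ and $-|H|^4$, and using the pointwise bound $|H^iH^jH^kh^k_{ij}|\le |A|\,|H|^3\le\La\ee\,|H|^2$ under the hypotheses $|A|\le\La$ and $|H|\le\ee$. For the second inequality, exactness of $\al_H$ provides a globally defined angle $\theta$ with $\Na_iH^i = \Delta\theta$, so $\int(\Na_iH^i)^2 d\mu_t = \int(\Delta\theta)^2 d\mu_t$. Expanding $\theta$ in an $L^2$-orthonormal basis of eigenfunctions of $\Delta$ on $L_t$ (the constant mode is annihilated by $\Delta$) gives the Rayleigh quotient inequality $\int(\Delta\theta)^2 d\mu_t\ge\la_1\int|\Na\theta|^2 d\mu_t = \la_1\int|H|^2 d\mu_t$; substituting this replaces $-2(\Na_iH^i)^2$ by $-2\la_1|H|^2$ and yields the sharper coefficient $-2(\la_1 - \bar R/2n - \La\ee)$. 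The technical core is thus the clean one-form evolution above; once it is in hand, both inequalities reduce to integration by parts together with a pointwise bound on the cubic term, and the subtle point to justify is that even though $\theta$ need not exist globally for the first inequality, the one-form evolution is intrinsic and well-defined on all of $L_t$.
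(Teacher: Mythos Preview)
Your argument is correct and reaches the same intermediate identity as the paper's inequality (\ref{lem:H:eq2}), but by a genuinely different route. The paper works with the vector components $H^i$ and their evolution (\ref{eq:LH}); this produces the full gradient term $-2|\Na_kH^i|^2$ together with quadratic-in-$A$ and ambient-curvature terms, and then an auxiliary Bochner-type inequality (the ``claim'' (\ref{lem:AH:eq4}), valid for any vector field $X$ and obtained from $0\le\tfrac12\int|\Na_iX^k-\Na_kX^i|^2$ plus the Gauss equation) is invoked to trade $|\Na_kH^i|^2$ for $(\Na_iH^i)^2$. You instead differentiate the $1$-form components $(\al_H)_j$ via the local angle equation, which feeds the divergence $\Na_iH^i=\Delta\theta$ in directly and makes the Bochner step unnecessary. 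Your observation that the derivation of Lemma~\ref{lem:angle} is local, so the $1$-form evolution $\pd{\al_H}{t}=-d\de\al_H+\tfrac{\bar R}{2n}\al_H$ is globally well-defined even when $\theta$ is not, is exactly the point needed to make the first inequality go through without exactness. One small remark: your identity carries the cubic term with coefficient $+2$, whereas the paper's route yields $-2$; since for $X=H=\Na\theta$ the antisymmetric part $\Na_iH^k-\Na_kH^i$ vanishes and the claim becomes an equality, at most one sign can be right, but either way $|2H^iH^jH^kh^k_{ij}|\le 2\La\ee|H|^2$ and both (\ref{lem:H:eq1}) and (\ref{lem:H:eq3}) follow. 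The paper's approach has the advantage that (\ref{lem:AH:eq4}) applies to arbitrary vector fields and may be reusable elsewhere; yours is shorter and makes transparent that the estimate is really about the closed $1$-form $\al_H$.
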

\begin{proof}
By (\ref{eq:LH}) we calculate \beqn &&\pd {}t\int_L\;|H|^2d\mu_t\n\\&=&\int_{L_t}\;\pd {g_{ij}}tH^iH^j
+2H^i\pd {}{t}H^i-|H|^4\n\\
&=&\int_L\; 2H^i\Delta H^i+2H^iH^jh_{kl}^jh_{lk}^i+2H^iH^{j}\bar R_{\un i m\un j m}-4H^iH^jH^kh_{jk}^i-|H|^4\nonumber\\
&\leq&\int_L\;-2|\Na_kH^i|^2+2H^iH^jh_{kl}^jh_{lk}^i+2H^iH^{j}\bar R_{\un i m\un j m}-4H^iH^jH^kh_{jk}^i-|H|^4\label{lem:AH:eq1}\eeqn

We claim that for any vector field $X=X^ie_i$ on $L$,  the inequality holds
\beq
\int_L\;|\Na_iX^k|^2-h_{km}^lh_{km}^iX^iX^l\geq\int_L\;|\Na_i X^i|^2-H^mh_{il}^mX^iX^l-\bar R_{kikl}X^iX^l.\label{lem:AH:eq4}
\eeq
In fact,
\beqn
0&\leq&\frac 12\sum_{i, k}\int_L\; |\Na_i X^k-\Na_k X^i|^2=\sum_{i, k}\int_L\;|\Na_iX^k|^2-
g(\Na_i X^k, \Na_k X^i)\nonumber\\
&=&\sum_{i, k}\int_L\;|\Na_iX^k|^2+g(\Na_k\Na_iX^k, X^i).\label{lem:AH:eq2}
\eeqn
Note that we can change the covariant derivatives
\beqn
\Na_k\Na_iX^k&=&\Na_i\Na_kX^k+R_{kikl}X^l\nonumber\\
&=&\Na_i\Na_kX^k+(\bar R_{kikl}+h^m_{kk}h^m_{il}-h^m_{kl}h^m_{ik})X^l,\label{lem:AH:eq3}
\eeqn
where $R_{ijkl}$ is the curvature tensor on $L$ and we used the Gauss equation
$$R_{ijkl}=\bar R_{ijkl}+h^{\al}_{ik}h^{\al}_{jl}-h^{\al}_{il}h^{\al}_{jk}.$$
Thus, (\ref{lem:AH:eq2}) and (\ref{lem:AH:eq3}) imply that
\beqs
0&\leq&\int_L\;|\Na_iX^k|^2-|\Na_i X^i|^2+\bar R_{kikl}X^iX^l+H^mh_{il}^mX^iX^l-h_{km}^lh_{km}^iX^iX^l,
\eeqs
which proves (\ref{lem:AH:eq4}).

\medskip
Now we apply the inequality (\ref{lem:AH:eq4}) for the vector
$H^ie_i$ and combine this with (\ref{lem:AH:eq1})
\beqn &&\pd
{}t\int_{L_t}\;|H|^2d\mu_t\n\\
&\leq&\int_{L_t}\;-2|\Na_kH^i|^2+2H^iH^jh_{kl}^jh_{lk}^i+2H^iH^{j}\bar R_{\un i m\un j m}-4H^iH^jH^kh_{jk}^i-|H|^4\n\\
&\leq&\int_{L_t}\;-2|\Na_i H^i|^2+2(\bar R_{kikl}H^iH^l+H^iH^{j}\bar R_{\un i m\un j m})-2H^iH^jH^kh_{jk}^i-|H|^4.\label{lem:H:eq2}
\eeqn
Note that
$\bar R_{kikl}+\bar R_{\bar i k\bar lk}=\bar R_{il},$
by the assumption we have
\beqn \pd
{}t\int_{L_t}\;|H|^2d\mu_t&\leq&\int_{L_t}\;2\bar R_{ij}H^iH^j-2H^iH^jH^kh_{jk}^i\n\\
&\leq&\Big(\frac {\bar R}n+2\La\ee\Big)\int_{L_t}\;|H|^2d\mu_t.
\n\eeqn Thus, (\ref{lem:H:eq1}) is proved.
\medskip

If we assume that the mean curvature form of $L_0$ is exact, then by Lemma \ref{lem:angle} the mean curvature form is also
exact for all $t.$ Thus, there exists a smooth function $\theta(x, t)$ with $H^i=\Na^i\theta,$ which implies
$$\int_{L_t}|\Na_iH^i|^2= \int_{L_t}|\Delta \theta|^2\geq \la_1\int_{L_t}|\Na \theta|^2=\la_1\int_{L_t}|H|^2.$$
Combining this with (\ref{lem:H:eq2}), we have
\beqn
\pd
{}t\int_{L_t}\;|H|^2d\mu_t
&\leq& -2\Big(\la_1-\frac {\bar R}{2n}-\La\ee\Big)\int_{L_t}\;|H|^2d\mu_t.\n
\eeqn
Thus, (\ref{lem:H:eq3}) is proved.
\end{proof}

\subsection{The first eigenvalue}
In previous section, we know that when the scalar curvature of the
ambient manifold is nonnegative, the exponential decay of the $L^2$
norm of mean curvature vector will depend on the behavior of the
first eigenvalue of the Laplacian along the flow.  In this
subsection, we give some estimates on the first eigenvalue, which
essentially says that the first eigenvalue will have a positive
lower bound if the mean curvature vector decays exponentially.

\begin{lem}\label{lem:eig}Along the Lagrangian mean curvature flow, we have
\begin{enumerate}
  \item For any constants $\dd, \La>0,$ there exists $t_0=t_0( n, \La, K_2, \dd)$ such that if the solution $L_t$ satisfies
  $|A|\leq \La$ for $t\in [0, t_0]$, then
  \beq
\sqrt{\la_1(t)}\geq \sqrt{\la_1(0)}(1-\dd)-\dd,\quad t\in [0, t_0]. \label{lem:eig:eq3}
  \eeq
  \item For any constants $T, \ee, \ga, \La>0$, if the solution $L_t$ satisfies
  \beq
|A|\leq \La, \quad |\Na H|+|H|\leq \ee e^{-\ga t},\quad t\in [0, T], \label{lem:eig:eq4}
  \eeq then we have the estimate
  \beq
\sqrt{\la_1(t)}\geq \sqrt{\la_1(0)}e^{-\frac 1{2\ga}(2\La \ee+\ee^2)}-\frac {(K_0+\La)\ee}{\ga},\quad t\in [0, T].
\label{lem:eig:eq5}
  \eeq
\end{enumerate}

\end{lem}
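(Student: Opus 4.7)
The plan is to compare $\lambda_1(t)$ with $\lambda_1(0)$ via the variational characterization of the first eigenvalue, by using the first eigenfunction at time $t$ as a test function for the Rayleigh quotient at time $0$ and tracking how the quadratic form $\int |\nabla\phi|^2 d\mu$ evolves under the flow. Let $\phi$ be a first eigenfunction at time $t$, normalized so that $\int_L \phi^2 d\mu_t = 1$ and $\int_L \phi\, d\mu_t = 0$, so that $\lambda_1(t) = \int_L |\nabla\phi|^2_{g(t)} d\mu_t$. Viewing $\phi$ as a fixed function on $L$, set $c := \vol(L, g(0))^{-1}\int_L \phi\, d\mu_0$ and $\tilde\phi := \phi - c$, making $\tilde\phi$ orthogonal to constants in $L^2(d\mu_0)$. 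The definition of $\lambda_1(0)$ then gives
\[\lambda_1(0)\Big(\int_L \phi^2\, d\mu_0 \,-\, c^2\, \vol(L, g(0))\Big) \,\leq\, \int_L |\nabla\phi|^2_{g(0)}\, d\mu_0.\]

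The right-hand side is controlled by tracking $u(\tau):=\int_L |\nabla\phi|^2_{g(\tau)}d\mu_\tau$ along the flow. Using $\partial_\tau g^{ij} = 2H^\alpha h^{\alpha,ij}$ and $\partial_\tau d\mu_\tau = -|H|^2 d\mu_\tau$, one finds
\[|u'(\tau)|\,\leq\, (2|A||H|+|H|^2)\,u(\tau),\]
so Gronwall combined with $|A|\leq\Lambda$ and $|H|\leq\epsilon e^{-\gamma\tau}$ gives $u(0)\leq \lambda_1(t)\exp(\tfrac{1}{\gamma}(2\Lambda\epsilon+\epsilon^2))$, whose half-power is exactly the exponential factor in (\ref{lem:eig:eq5}). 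For the denominator, $\partial_\tau d\mu\leq 0$ immediately gives $\int_L \phi^2 d\mu_0 \geq \int_L \phi^2 d\mu_t = 1$. The shift satisfies $c\,\vol(L,g(0)) = \int_L \phi(d\mu_0 - d\mu_t)$ (using $\int_L \phi\, d\mu_t = 0$); to obtain the sharper linear-in-$\epsilon$ bound $|c|=O((K_0+\Lambda)\epsilon/\gamma)$ rather than the naive $O(\epsilon^2/\gamma)$, one integrates by parts using the eigenfunction equation $-\Delta_{g(t)}\phi = \lambda_1(t)\phi$ to transfer a derivative onto $H$, picking up $|\nabla H|$ together with the ambient curvature bound $K_0$ through the Gauss equation. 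Combining this with $\sqrt{a-b}\geq \sqrt{a}-\sqrt{b}$ and rearranging yields (\ref{lem:eig:eq5}).

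Statement (1) then follows from the same strategy using only $|A|\leq\Lambda$, hence $|H|\leq\sqrt{n}\,\Lambda$: the Gronwall factor becomes $\exp(O(\Lambda^2 t_0))$, which is within $\delta$ of $1$ for $t_0$ small, and the shift $c$ is correspondingly $O(t_0)$; the constant $K_2$ enters through Shi-type derivative bounds on $|\nabla A|$ needed to justify the integration by parts in the shift estimate. The main obstacle throughout is precisely this shift estimate: the purely Gronwall step produces a clean multiplicative factor, but the naive bound on $c$ using only $\|H\|_\infty^2$ is too crude to yield the linear-in-$\epsilon$ correction required by (\ref{lem:eig:eq5}), forcing the subtle interplay of the eigenfunction equation with the ambient curvature, which is exactly where both $|\nabla H|$ (from the hypothesis) and $K_0$ (from the Gauss equation) must enter.
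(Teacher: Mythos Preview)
Your approach is genuinely different from the paper's. The paper differentiates $\lambda_1(t)$ directly: taking a normalized eigenfunction $f$ with $-\Delta f=\lambda_1 f$ and $\int f^2\,d\mu_t=1$, it computes $\partial_t\lambda_1=-\partial_t\int f\Delta f\,d\mu_t$, integrates by parts, and uses the Codazzi equation $\nabla_k h^i_{kl}=\nabla_l H^i-\bar R_{\un iklk}$ to arrive at
\[
\partial_t\lambda_1 \;=\; \int_{L_t} 2H^i h^i_{kl}\nabla_kf\nabla_lf - |H|^2(|\nabla f|^2+f\Delta f) - 2H^i\bar R_{\un iklk}\,f\nabla_lf + 2\nabla_kH^i\, h^i_{kl}\,f\nabla_lf.
\]
The first two integrands are bounded by multiples of $\lambda_1$; the last two, which involve the product $f\nabla f$, contribute at most $2(K_0+\Lambda)\epsilon e^{-\gamma t}\sqrt{\lambda_1}$ since $\|f\|_{L^2}\|\nabla f\|_{L^2}=\sqrt{\lambda_1}$. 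This yields an ODI for $\sqrt{\lambda_1}$ that integrates exactly to \eqref{lem:eig:eq5}, with the ambient curvature $K_0$ and the hypothesis on $|\nabla H|$ entering precisely through those $f\nabla f$ terms. Part~(1) is handled the same way, invoking the Shi-type estimate $|\nabla H|\le a_1/\sqrt{t}$ from Lemma~\ref{lem:higer}.

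Your frozen-test-function approach correctly reproduces the multiplicative factor $e^{-(2\Lambda\epsilon+\epsilon^2)/(2\gamma)}$ via Gronwall on $u(\tau)$. The gap is in the additive correction. The shift $c$ is governed by $\int_0^t|H|^2$, so the naive bound is already $|c|=O(\epsilon^2/\gamma)$ and hence $c^2\vol=O(\epsilon^4/\gamma^2)$; but in your final inequality this term multiplies $\sqrt{\lambda_1(0)}$, giving
\[
\sqrt{\lambda_1(t)}\;\geq\;\sqrt{\lambda_1(0)}\,e^{-\cdots}\bigl(1-O(\epsilon^4)\bigr)
\]
rather than the stated $\sqrt{\lambda_1(0)}\,e^{-\cdots}-(K_0+\Lambda)\epsilon/\gamma$. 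These two forms agree only under an \emph{a~priori} upper bound on $\lambda_1(0)$, which the lemma does not assume. Your proposed integration by parts does not repair this: writing $\phi=-\lambda_1(t)^{-1}\Delta_{g(t)}\phi$ and moving a derivative onto $|H|^2$ produces $|H|\,|\nabla H|=O(\epsilon^2 e^{-2\gamma\tau})$, so the power of $\epsilon$ does not improve, and no term of the shape $K_0|H|$ arises from the Gauss equation in that computation. The $(K_0+\Lambda)\epsilon/\gamma$ correction is intrinsic to the $f\nabla f$ interaction in the paper's direct variation of $\lambda_1$, and your route does not recover it in the stated form.
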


\begin{proof}Let $f(x, t)$ be a eigenfunction of the Laplacian operator with respect to the induced metric on $L_t$ satisfying
$$-\Delta f=\la_1(t) f,\quad \int_{L_t}f^2d\mu_t=1.$$
Taking derivative with respect to $t$, we have
\beq
\int_{L_t}\;2f\pd ft-f^2|H|^2=0.\label{lem:eig:eq6}
\eeq
Observe that the first eigenvalue satisfies
$$\la_1(t)=\int_{L_t}|\Na f|^2d\mu_t.$$
Thus, we calculate
\beqn
\pd {\la_1}t&=&-\pd {}t\int_{L_t}\;f\Delta f d\mu_t\n\\
&=&-\int_{L_t}\;2\pd ft\Delta f+f\Big(\pd {}t\Delta\Big)f-f\Delta f|H|^2\n\\
&=&\int_{L_t}\;\la_1\Big(2f\pd ft-f^2|H|^2\Big)-2H^ih^i_{kl}f\Na_k\Na_lf\n\\
&=&\int_{L_t}\;-2H^ih^i_{kl}f\Na_k\Na_lf\n\\
&=&\int_{L_t}\;2H^ih^i_{kl}\Na_kf\Na_lf+2(H^ih^i_{kl})_kf\Na_lf,\label{lem:eig:eq1}
\eeqn where we used the equality (\ref{lem:eig:eq6}). Note that by the Codazzi equation we have
$$\Na_kh^i_{kl}-\Na_lh^i_{kk}=-\bar R_{\un iklk}.$$
Combining this with (\ref{lem:eig:eq1}), we have
\beqn
\pd {\la_1}t&=&\int_{L_t}\;2H^ih^i_{kl}\Na_kf\Na_lf+2(H^ih^i_{kl})_kf\Na_lf\n\\
&=&\int_{L_t}\;2H^ih^i_{kl}\Na_kf\Na_lf+2(\Na_kH^ih^i_{kl}+H^i\Na_lH^i-H^i\bar R_{\un iklk })f\Na_lf\n\\
&=&\int_{L_t}\;2H^ih^i_{kl}\Na_kf\Na_lf-|H|^2(|\Na f|^2+f\Delta f)\n\\&&-2H^i\bar R_{\un iklk }f\Na_lf
+2\Na_kH^ih^i_{kl}f\Na_lf.\label{lem:eig:eq7}
\eeqn
(1). Under the assumption $1$, by Lemma \ref{lem:zero} and \ref{lem:higer} there exist  positive constants
$\un t=\un t(n, \La, K_1)$ and $a_1=a_1(n, \La, K_2)$
 such that
$$|\Na H|\leq \frac {a_1}{\sqrt{t}},\quad t\in (0, \un t].$$
Thus, by (\ref{lem:eig:eq7}) we have
\beqn
\pd {\la_1}t&\geq&-4\La^2\la_1-2K_0\La \la_1^{\frac 12}-\frac {2\La a_1}{\sqrt{t}}\la_1^{\frac 12}\n\\
&=&-c_1\la_1-\Big(\frac {c_2}{\sqrt{t}}+c_3\Big)\la_1^{\frac 12}, \quad t\in (0, \un t]\n
\eeqn where $c_1=4\La^2, c_2=2\La a_1$ and $c_3=2K_0\La.$ Thus, we have
\beq
\sqrt{\la_1(t)}\geq \sqrt{\la_1(0)} e^{-\frac {c_1}2t}-\frac {c_3}2t-c_2\sqrt{t},\quad t\in [0, \un t].\n
\eeq
If we choose $t$ sufficiently small, then (\ref{lem:eig:eq3}) is proved.

\medskip

(2). Under the assumption (\ref{lem:eig:eq4}), by (\ref{lem:eig:eq7}) we have
\beqn
\pd {\la_1}t&\geq&-2\La \ee e^{-\ga t}\la_1-2\ee^2 e^{-2\ga t}\la_1-2K_0 \ee
e^{-\ga t}\la_1^{\frac 12}-2\La \ee e^{-\ga t}\la_1^{\frac 12}\n\\
&\geq&-(2\La \ee e^{-\ga t}+2\ee^2 e^{-2\ga t})\la_1-2(K_0+\La) \ee e^{-\ga t}\la_1^{\frac 12},\quad t\in [0, T].\n
\eeqn
Thus, we have
\beq
\sqrt{\la_1(t)}\geq \sqrt{\la_1(0)}e^{-\frac 1{2\ga}(2\La \ee+\ee^2)}-\frac {(K_0+\La)\ee}{\ga}.\n
\eeq

\end{proof}
\subsection{Zero order estimates}
In Section \ref{sec3.1}, we proved the exponential decay of the
$L^2$ norm of the mean curvature vector under some conditions. To
get a pointwise decay of the mean curvature form, we need to do more
work. One way is to use the parabolic Moser iteration as in the
K\"ahler-Ricci flow in \cite{[CLW]} and \cite{[CL]}. However, the
Sobolev inequality for submanifolds in a Riemannian manifold needs
many restrictions (cf. \cite{[HS]}). Here we give a simple
observation to bound the $C^0$ estimates by the $L^2$ norm. First,
we  introduce the following definition, which is inspired by Ricci
flow \cite{[Pe]}:

\begin{defi}
A geodesic ball $B(p, \rho)\subset L$ is called $\kappa$-noncollapsed if $\vol(B(q, s))\geq \kappa s^n$ whenever
$B(q, s)\subset B(p, \rho).$ Here the volume is with respect to the induced metric on $L.$ A Riemannian manifold $L$
 is called $\kappa$-noncollapsed on the scale $r$ if every geodesic ball $B(p, s)$ is $\kappa$-noncollapsed for $s\leq r.$
\end{defi}

\begin{lem}\label{lem:vol}If $L_0$ is $\kappa_0$-noncollapsed
on the scale $r_0$, then for any  small geodesic ball $B_{t}(p, \rho)$ in $L_t$ with radius $\rho\in (0, r_0)$, we have
$$\vol(B_{t}(p, \rho))\geq \kappa_0 e^{-(n+1)E(t)}\rho^n,$$
where $E(t)$ is given by
\beq E(t)=\int_0^t\;\max_{L_s}(|A||H|+|H|^2)\;ds.\label{lem:vol:eq1}\eeq
\end{lem}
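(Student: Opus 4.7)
The plan is to identify $L_t$ with $L$ via the flow map $F_t$, compare the induced metric $g(t)$ to $g(0)$ pointwise, transport the ball $B_t(p,\rho)$ back to a slightly smaller ball at time zero, apply the noncollapsing hypothesis there, and then push the resulting volume bound forward to time $t$ using the evolution of the volume form.

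For the metric comparison I would start from $\partial_t g_{ij} = -2 H^{\alpha} h_{ij}^{\alpha}$. Contracting with an arbitrary tangent vector $v$ gives $|\partial_t \log g(v,v)| \leq 2|A||H|$, so setting $\alpha(t) := \int_0^t \max_{L_s}(|A||H|)\,ds$ one obtains the pointwise pinching $e^{-2\alpha(t)} g(0) \leq g(t) \leq e^{2\alpha(t)} g(0)$. Since the $g(0)$-length and $g(t)$-length of any curve therefore differ by at most a factor $e^{\alpha(t)}$, this yields the inclusion
$$B_0\bigl(p,\; e^{-\alpha(t)}\rho\bigr) \subset B_t(p,\rho).$$
Taking the trace of the same evolution equation gives $\partial_t \log \det g = -2|H|^2$, hence $d\mu_t \geq e^{-\beta(t)} d\mu_0$ pointwise with $\beta(t) := \int_0^t \max_{L_s}|H|^2\,ds$.

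Combining the ball inclusion with the volume-form comparison, and invoking the noncollapsing of $L_0$ at scale $r_0$ (permissible since $e^{-\alpha(t)}\rho < r_0$ because $\alpha(t) \geq 0$), one gets
$$\vol(B_t(p,\rho)) \geq e^{-\beta(t)} \vol\bigl(B_0(p, e^{-\alpha(t)}\rho)\bigr) \geq \kappa_0\, e^{-\beta(t) - n\alpha(t)}\rho^n.$$
The stated bound then follows from the trivial estimate $n\alpha(t) + \beta(t) \leq (n+1)E(t)$, which holds because each of $\alpha(t)$ and $\beta(t)$ is dominated by $E(t)$.

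There is no serious obstacle here: the argument is essentially bookkeeping around the standard first-variation identities for the induced metric and the volume form along mean curvature flow. The only points needing care are orienting the ball inclusion in the correct direction (a larger metric gives a smaller ball, so the relevant time-zero ball is the one of radius $e^{-\alpha(t)}\rho$) and tracking how the $n$-th power from the radius and the single power from the volume form combine to produce exactly the factor $(n+1)$ appearing in the conclusion.
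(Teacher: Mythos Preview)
Your proof is correct and follows essentially the same approach as the paper: compare metrics pointwise via the evolution $\partial_t g_{ij} = -2H^{\alpha}h^{\alpha}_{ij}$, deduce a distance comparison and hence a ball inclusion, control the volume form separately, and apply the $\kappa_0$-noncollapsing at time $0$. The only cosmetic difference is that you separate the roles of $|A||H|$ (metric distortion, giving your $\alpha(t)$) and $|H|^2$ (volume form, giving your $\beta(t)$) before bounding both by $E(t)$, whereas the paper simply works with $E(t)$ throughout for both the distance and the volume-form estimates; your version is marginally sharper but reduces to the same conclusion.
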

\begin{proof}Recall that the evolution equation of the induced metric on $L$ is given by
$$\pd {}tg_{ij}=-2H^{k}h^k_{ij},$$
which implies that the distance function satisfies
$$e^{-E(t)}d_0(x, y)\leq d_t(p, q)\leq d_0(p, q)e^{E(t)}$$
 and the volume form
 $d\mu_t\geq e^{-E(t)}d\mu_0,$
 where $E(t)$ is given by (\ref{lem:vol:eq1}). Thus, the volume of $B_t(p, \rho)$ has the estimate
\beqs
\vol(B_{p}(\rho))&=& \int_{B_t(p, \rho)}\; d\mu_t\geq \int_{B_0(p, e^{-E(t)}\rho)}e^{-E(t)}d\mu_0\geq \kappa_0 e^{-(n+1)E(t)}\rho^n,
\eeqs
as long as $\rho\leq r_0$ since $L_0$ is $\kappa_0$-noncollapsed on the scale $r_0.$ The lemma is proved.

\end{proof}

To derive the zero order estimate of the mean curvature vector, we prove the following simple result:
\begin{lem}\label{lem:f}Suppose that $L$ is $\kappa$-noncollapsed
on the scale $r$. For any tensor $S$ on $L$, if
$$|\Na S|\leq \La, \quad \int_L\; |S|^2\;d\mu\leq \ee,$$
where $\ee\leq r^{n+2},$ then
$$\max_L|S|\leq \Big(\frac 1{\sqrt{\kappa}}+\La\Big)\ee^{\frac 1{n+2}}.$$
\end{lem}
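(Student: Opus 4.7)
The plan is to use the standard ``maximum-point ball'' argument: the gradient bound forces $|S|$ to be comparable to its maximum on a ball around the maximizing point, and the noncollapsing assumption turns that into a volume lower bound, which then controls the maximum via the $L^2$ hypothesis. The one cute trick is to choose the radius of the ball to be exactly $\rho=\ee^{1/(n+2)}$, which makes the hypothesis $\ee\le r^{n+2}$ precisely the statement $\rho\le r$ needed to invoke noncollapsing.

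Concretely, I would let $M=\max_L|S|$, achieved at some point $p\in L$. Since $|\Na S|\le\La$, the Kato-type inequality (applied along a minimizing geodesic from $p$) gives the pointwise lower bound
\[
|S|(x)\;\ge\;M-\La\, d(x,p)\qquad\text{for all }x\in L.
\]
Setting $\rho:=\ee^{1/(n+2)}$, the hypothesis $\ee\le r^{n+2}$ gives $\rho\le r$, so the ball $B(p,\rho)\subset L$ is $\kappa$-noncollapsed, i.e.\ $\vol(B(p,\rho))\ge\kappa\rho^n=\kappa\,\ee^{n/(n+2)}$.

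If $M\le\La\rho$, then already $M\le\La\,\ee^{1/(n+2)}$ and we are done. Otherwise $M-\La\rho>0$, and on $B(p,\rho)$ we have $|S|\ge M-\La\rho$, whence
\[
\ee\;\ge\;\int_{B(p,\rho)}|S|^2\,d\mu\;\ge\;(M-\La\rho)^2\,\vol(B(p,\rho))\;\ge\;(M-\La\rho)^2\,\kappa\,\ee^{n/(n+2)}.
\]
Solving for $M-\La\rho$ yields $M-\La\rho\le\kappa^{-1/2}\ee^{1/(n+2)}$, so
\[
M\;\le\;\Big(\La+\tfrac{1}{\sqrt{\kappa}}\Big)\ee^{1/(n+2)},
\]
which is the desired bound.

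I do not anticipate a serious obstacle here; the only thing to verify is that the Lipschitz-type decay $|S|(x)\ge |S|(p)-\La d(x,p)$ really follows from $|\Na S|\le\La$ for a tensor (it does, by integrating $|\Na|S|\,|\le|\Na S|$ along a geodesic, with the trivial lower bound $|S|\ge 0$ taking care of points where $|S|$ has vanished). Everything else is bookkeeping, and the scale $\rho=\ee^{1/(n+2)}$ is tailored exactly to balance the volume factor $\rho^n$ against the target exponent $\ee^{1/(n+2)}$.
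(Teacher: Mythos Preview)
Your proof is correct and follows essentially the same argument as the paper: pick the maximum point, use the gradient bound to get a pointwise lower bound on a ball of radius $\rho=\ee^{1/(n+2)}$, invoke $\kappa$-noncollapsing for the volume lower bound, and read off the estimate. Your version is in fact slightly more careful, since you separate out the trivial case $M\le\La\rho$ and explicitly justify the Lipschitz decay of $|S|$ via Kato's inequality, whereas the paper simply asserts $|S(x)|\ge|S(x_0)|-\La\dd>0$ on the ball.
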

\begin{proof}
Assume that $|S|$ attains its maximum at point $x_0\in L$. Thus, for any point $x\in B(x_0, \dd) $ with
small $\dd>0$ we have
$$|S(x)|\geq |S(x_0)|-\La \dd>0.$$
Hence, we have the inequality
$$\ee\geq \int_{B(x_0, \dd)}|S|^2\,d\mu\geq (|S(x_0)|-\La \dd)^2\vol(B(x_0, \dd))\geq(|S(x_0)|-\La \dd)^2\kappa \dd^n. $$
Let $\dd=\ee^{\frac 1{n+2}}$ and we choose $\ee$ small such that $\ee^{\frac 1{n+2}}\leq r$, then
$$\max_L|S|\leq \Big(\frac 1{\sqrt{\kappa}}+\La\Big)\ee^{\frac 1{n+2}}.$$
The lemma is proved.
\end{proof}

\subsection{Higher order estimates}
In this subsection, we collect some basic estimates for the second
fundamental form, which can be proved by the maximum principle. The
following result shows that the second fundamental form doesn't
change too much near the initial time.

\begin{lem}\label{lem:zero}
Along the Lagrangian mean curvature flow, if $L_0$ satisfies
$$|A|(0)\leq \La, \quad |H|(0)\leq \ee, $$
then there exists $T=T(n, \La, K_1)$ such that $L_t$ has the estimates
\beq |A|(t)\leq 2\La, \quad |H|(t)\leq 2\ee, \quad t\in [0, T].\label{lem:zero:eq1}\eeq

\end{lem}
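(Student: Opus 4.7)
The plan is to apply the parabolic maximum principle to the evolution equations of $|A|^2$ and $|H|^2$ derived in Lemma~\ref{lem:AH} and Lemma~\ref{lem:LH}, and then compare with scalar ODEs to obtain short-time upper bounds. Compactness of $L_t$ lets me work with $\max_{L_t}|A|^2$ and $\max_{L_t}|H|^2$ directly.

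First, I would contract (\ref{lem:AH:eq5}) with $g^{ip}g^{jq}g_{kl}$ and incorporate $\pd{}{t}g_{ij}=-2H^kh^k_{ij}$ to obtain a pointwise inequality of the form
\beq
\pd{}{t}|A|^2\leq \Delta|A|^2-2|\Na A|^2+C_n|A|^4+C(K_1)\bigl(1+|A|^2\bigr),
\eeq
where $C(K_1)$ absorbs the ambient curvature and its covariant derivative (the latter arising from the $\bar\Na_l\bar R_{\al jil}$ terms in (\ref{lem:AH:eq5})). The maximum principle then gives
\beq
\frac{d}{dt}\max_{L_t}|A|^2\leq C_n\bigl(\max|A|^2\bigr)^2+C(K_1)\bigl(1+\max|A|^2\bigr),
\eeq
and comparison with the ODE $y'=C_ny^2+C(K_1)(1+y)$, $y(0)=\La^2$, shows that $y(t)\leq 4\La^2$ on an interval $[0,T_1]$ with $T_1=T_1(n,\La,K_1)>0$.

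Next, on $[0,T_1]$ I would use (\ref{eq:LH}) together with $\pd{}{t}g_{ij}=-2H^kh^k_{ij}$ to derive
\beq
\pd{}{t}|H|^2\leq \Delta|H|^2-2|\Na H|^2+C\bigl(|A|^2+K_0\bigr)|H|^2+C|A|\,|H|^3.
\eeq
As long as $|H|\leq 4\ee$, the cubic term is dominated by $C(\La)\ee|H|^2$, so with the $|A|\leq 2\La$ bound already in hand, the maximum principle plus Gr\"onwall yields $\max|H|^2\leq \ee^2 e^{C(n,\La,K_1)t}$. Choosing $T_2=T_2(n,\La,K_1)>0$ to keep the right side below $4\ee^2$, a standard continuity argument (the set of $t$ on which $|H|\leq 4\ee$ is closed, nonempty, and relatively open by the Gr\"onwall bound) legitimizes the a priori assumption $|H|\leq 4\ee$. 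Taking $T=\min(T_1,T_2)$ proves both estimates in (\ref{lem:zero:eq1}).

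The only genuine technical point is the quartic self-interaction $|A|^4$ in the evolution of $|A|^2$, which is precisely what forces $T$ to depend on $\La$ and prevents a global bound; this is entirely consistent with the short-time nature of the claim. The dependence of $T$ on $K_1$ rather than merely $K_0$ is dictated by the $\bar\Na\bar Rm$ terms in (\ref{lem:AH:eq5}); no higher curvature derivatives are needed here.
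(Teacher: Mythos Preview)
Your proposal is correct and follows essentially the same strategy as the paper: apply the parabolic maximum principle to the evolution inequalities coming from Lemma~\ref{lem:AH} and Lemma~\ref{lem:LH}, then use an ODE comparison/continuity argument to keep $|A|$ below $2\La$ on a time interval depending only on $(n,\La,K_1)$, and finally a Gr\"onwall estimate to control $|H|$. The only cosmetic differences are that the paper works with $|A|$ and $|H|$ rather than their squares and absorbs the cubic $|A|\,|H|^3$ term directly into $|A|^2|H|$ (via $|H|\leq\sqrt{n}\,|A|$), so no separate bootstrap for $|H|$ is needed there.
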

\begin{proof}It follows from the maximum principle. Recall that by (\ref{lem:AH:eq5}) the second fundamental form satisfies
$$\pd {}t|A|\leq \Delta|A|+c_1(n)|A|^3+c_2(n, K_0)|A|+c_3(n, K_1).$$
Let $t_0=\sup\{s>0\;|\;|A|(t)\leq 2\La,\;  t\in [0, s)\}$. Then, for $t\in [0, t_0)$ we have the inequality
$$\pd {}t|A|\leq \Delta |A|+8\La^3c_1+2\La c_2+c_3,\quad t\in [0, t_0).$$
Thus, we can apply the maximum principle
$$|A|(t)\leq \max_{L_0}|A|(0)+(8\La^3c_1+2\La c_2+c_3)t\leq \frac 32\La, \quad t\in [0, \frac {\La}{2(8\La^3c_1+2\La c_2+c_3)}].$$
Combining this with the definition of $t_0$ we have
$$t_0\geq\frac {\La}{2(8\La^3c_1+2\La c_2+c_3)}. $$

Now we estimate the mean curvature vector. In fact, for $t\in [0, t_0]$ the mean curvature satisfies
the inequality
$$\pd {}t|H|\leq \Delta |H|+|A|^2|H|+K_0|H|\leq \Delta|H|+(4\La^2+K_0)|H|,$$
which implies
$$|H|(t)\leq |H|(0)e^{(4\La^2+K_0)t}\leq 2\ee, \quad t\in [0, \min\{t_0, \frac {\log 2}{(4\La^2+K_0)}\}].$$
Thus, (\ref{lem:zero:eq1}) holds for $$T=\min\{\frac {\La}{2(8\La^3c_1+2\La c_2+c_3)}, \frac {\log 2}{(4\La^2+K_0)}\}.$$
\end{proof}

\medskip

For higher order estimates, K. Smoczyk proved in \cite{[S2]} that
all higher order derivatives of the second fundamental form are
bounded if the $C^0$ norm of $A$ is bounded for a short time
interval. However, the  bound of higher order derivatives will
depend on the derivatives of the second fundamental form of the
initial submanifold. In this paper we need more precise estimates as
in Ricci flow. The following result is taken from \cite{[CY]}, and
the readers are referred to \cite{[CY]} for details.

\begin{lem}(cf. Theorem 3.2 in \cite{[CY]})\label{lem:higer}
Assume that the Lagrangian mean curvature flow has a smooth solution for
 $t\in [0, t_0]$. If there is a
constant $\La$ such that
$$\max_{L_t}|A|^2\leq \La, \quad t\in [0, t_0],$$
then for any $k>0$ there exists a constant $C_k=C_k(n, \La, K_{k+1},
t_0)$ such that
$$\max_{L_t}|\Na^kA|^2\leq \frac {C_k}{t^k},\quad t\in (0, t_0],$$
where $K_k=\sum_{l=0}^k\;\max_M|\bar \Na^l\bar Rm|.$
\end{lem}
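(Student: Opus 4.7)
The plan is to follow the standard Bernstein/Shi-type argument, i.e.\ to derive an evolution inequality for $|\Na^k A|^2$, then inductively bound it using an auxiliary function of the form $F_k=t^k|\Na^k A|^2+\beta_k t^{k-1}|\Na^{k-1}A|^2$ and the maximum principle. Since the bound on $|A|$ is assumed globally on $[0,T]$, no cutoff function is needed; this is what lets us run a clean induction.

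First, starting from the evolution equations in Lemma \ref{lem:AH} (or equivalently Lemma \ref{lem:LH} specialized to the Lagrangian case) and commuting $\Na$ past $\pd{}t$ and $\Delta$, one obtains inductively a schematic evolution equation
\[
\pd{}t\Na^k A=\Delta\Na^k A+\sum_{i+j+l=k}\Na^i A*\Na^j A*\Na^l A+\sum_{i+l=k}\Na^i A*\bar\Na^l\bar{Rm}+\bar\Na^{k+1}\bar{Rm},
\]
where $*$ denotes a contraction with the metric (and where the Lagrangian identities $h^k_{ij}=h^i_{kj}$ allow one to treat $A$ and $H$ on the same footing). Contracting with $\Na^k A$ and applying Kato's inequality gives
\[
\pd{}t|\Na^k A|^2\leq\Delta|\Na^k A|^2-2|\Na^{k+1}A|^2+C\!\!\!\sum_{i+j+l=k}\!\!\!|\Na^i A||\Na^j A||\Na^l A||\Na^k A|+C(K_{k+1})\bigl(|\Na^k A|+1\bigr).
\]

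Next, I would induct on $k$. The base case $k=0$ is the hypothesis. Assuming $|\Na^j A|^2\leq C_j/t^j$ for $j<k$, consider
\[
F_k=t^k|\Na^k A|^2+\beta_k t^{k-1}|\Na^{k-1}A|^2.
\]
The computation of $(\pd{}t-\Delta)F_k$ produces a term $-2\beta_k t^{k-1}|\Na^k A|^2$ from the good gradient term of $|\Na^{k-1}A|^2$; choosing $\beta_k=\beta_k(n,\La,k)$ large enough absorbs the ``bad'' $|\Na^k A|^2$ contributions generated by the $(k-1)$-level cubics. The residual cubic terms of the form $|\Na^i A||\Na^j A||\Na^l A||\Na^k A|$ with $i+j+l=k$ either have at most one factor of top order (handled by Cauchy--Schwarz against $|\Na^{k+1}A|^2$, which is already absorbed) or only lower orders, in which case the induction hypothesis together with $|A|\leq\sqrt\La$ yields the pointwise bound $Ct^{-k}$ with $C=C(n,\La,K_{k+1},T)$. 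The curvature terms are handled using $|\bar\Na^l\bar{Rm}|\leq K_{k+1}$ and $t\leq T$. After all cancellations,
\[
\pd{}tF_k\leq\Delta F_k+C_k'(n,\La,K_{k+1},T).
\]
The maximum principle then gives $F_k(t)\leq F_k(0)+C_k' T=\beta_k\cdot 0+C_k'T$ for $k\geq 1$, hence $|\Na^k A|^2\leq C_k/t^k$ on $(0,T]$.

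The main obstacle is the combinatorial bookkeeping: one must verify that every mixed cubic in the evolution of $\Na^k A$ admits an appropriate weighting in $t$ so that after multiplying by $t^k$ the product is controlled by the induction hypothesis, and one must choose $\beta_k$ with the correct dependence so that the $-2\beta_k t^{k-1}|\Na^k A|^2$ term genuinely dominates the ``borrowed'' quadratic term. The extra ingredient compared with Shi's original Ricci-flow estimate is that cubic contractions of the second fundamental form appear (through the term $A*A*A$ in Lemma \ref{lem:AH}), but these are harmless because $|A|\leq\sqrt\La$ is assumed throughout $[0,T]$. As the authors note, this argument is carried out in detail in \cite{[CY]}, so it suffices here to cite that reference.
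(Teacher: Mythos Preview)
Your proposal is correct and matches the paper's treatment: the paper does not give its own proof but simply cites \cite{[CY]} for the details, and the Bernstein/Shi-type induction you sketch (schematic evolution of $\Na^k A$, auxiliary quantity $t^k|\Na^k A|^2+\beta_k t^{k-1}|\Na^{k-1}A|^2$, maximum principle) is precisely the argument carried out there. Your closing remark that it suffices to cite \cite{[CY]} is exactly what the paper does.
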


\begin{rem}\label{rem}In Lemma \ref{lem:higer} we can choose $t_0=T(n, \frac 12\La, K_1)$
where $T$ is given by Lemma \ref{lem:zero}, and  the constants $C_k$
depends only on $n, \La$ and $K_{k+1}.$ Thus, for any $t_1>t_0$ (no
matter how large $t_1$ is), as long as the flow satisfies
$\max_{L_t}|A|^2\leq \La$ when $ t\in [0, t_1],$ we have
$$\max_{L_t}|\Na^kA|^2\leq \frac {2^kC_k(n, \La, K_{k+1})}{t_0^k},\quad t\in \Big(\frac {t_0}2, t_1\Big],$$
Note that the right-hand side of the above inequality is independent
of $t_1.$ This property will be used many times in the proof of main
theorems.
\end{rem}
\section{Proof of Theorem \ref{main}}
In this section, we will prove Theorem \ref{main}. For any positive constants $\kappa, r, \La, \ee,$
we define the following subspace of Lagrangian submanifolds in $M$ by
$$\cA(\kappa, r, \La, \ee)=\Big\{L \;\Big|\;L \hbox{ is $\kappa$-noncollapsed on the scale $r$ with}\;
|A|(t)\leq \La, \;|H|(t)\leq \ee\Big\}.$$

\medskip

The following result shows that the flow will have good estimates
for a short time.
\begin{lem}\label{lem:initial}If the initial Lagrangian submanifold $L_0\in \cA(\kappa, r, \La, \ee)$,
then there exists $\tau=\tau(n, \La, K_1)>0$ such that $L_t\in \cA(\frac 12\kappa, r, 2\La, 2\ee)$ for $t\in [0, \tau].$
\end{lem}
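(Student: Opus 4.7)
The plan is to verify, one at a time, the three conditions that define membership in $\cA(\tfrac12\kappa, r, 2\La, 2\ee)$: the pointwise bound $|A|(t)\leq 2\La$, the pointwise bound $|H|(t)\leq 2\ee$, and the $\tfrac12\kappa$-noncollapsing on scale $r$.

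First, the two pointwise bounds follow immediately from Lemma \ref{lem:zero} applied to $L_0$. That lemma already produces a time $T=T(n,\La,K_1)>0$ on which $|A|(t)\leq 2\La$ and $|H|(t)\leq 2\ee$, so I would take $\tau\leq T$ from the outset and the first two conditions are for free.

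Second, to upgrade this to control the $\kappa$-noncollapsing constant, I would feed these pointwise bounds into Lemma \ref{lem:vol}. Writing $E(t)=\int_0^t\max_{L_s}(|A||H|+|H|^2)\,ds$, the first step gives
\[
E(t)\leq t\bigl(4\La\ee+4\ee^2\bigr),\qquad t\in[0,T],
\]
and, using the universal bound $|H|\leq\sqrt{n}\,|A|$ to replace $\ee$ by $\La$ if one insists on the dependencies stated in the lemma, even $E(t)\leq Ct\La^2$ with $C=C(n)$. Choose $\tau\leq T$ small enough that $(n+1)E(\tau)\leq \log 2$, so that $e^{-(n+1)E(t)}\geq \tfrac12$ for $t\in[0,\tau]$. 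Then, for any geodesic ball $B_t(q,s)$ with $s\leq r\leq r_0$, Lemma \ref{lem:vol} gives
\[
\vol\bigl(B_t(q,s)\bigr)\;\geq\;\kappa\,e^{-(n+1)E(t)}s^n\;\geq\;\tfrac12\kappa s^n,
\]
which is precisely $\tfrac12\kappa$-noncollapsing on scale $r$. Taking $\tau=\tau(n,\La,K_1)$ to be the minimum of $T$ and the time threshold produced above finishes the proof.

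There is no serious obstacle here: the lemma is essentially a bookkeeping step that packages the short-time estimate of Lemma \ref{lem:zero} and the volume-comparison estimate of Lemma \ref{lem:vol} into one stable class $\cA$. The only mild care needed is to make sure the distortion factor $e^{-(n+1)E(t)}$ loses at most a factor of $2$, which forces $\tau$ to be small in terms of the quantities controlling $E$; this is what dictates the choice of $\tau$ above.
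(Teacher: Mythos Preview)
Your proposal is correct and is exactly the approach the paper takes: the paper's proof is a single sentence citing Lemma~\ref{lem:vol} and Lemma~\ref{lem:zero}, and you have simply spelled out the details of how those two lemmas combine. Your observation that $|H|\leq\sqrt{n}\,|A|$ can be used to make $\tau$ depend only on $(n,\La,K_1)$ and not on $\ee$ is the right way to match the stated dependence.
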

\begin{proof}This result follows directly from Lemma \ref{lem:vol} and Lemma \ref{lem:zero}.
\end{proof}

\medskip

The following lemma is a crucial step in the whole argument of the
proof. It shows that if the flow has a rough bound for a finite time
interval, then we can choose some constant sufficiently small such
that the mean curvature will decay exponentially and the flow has
uniform bounds which are independent of the length of this time
interval.

\begin{lem}\label{lem:main}For any $\kappa_0, r_0, \La_0, V_0, T>0$ there exists $\ee_0=\ee_0(\kappa_0, r_0, \La_0, n, K_5, V_0)>0$
such that if the solution $L_t(t\in [0, T])$ of the Lagrangian mean curvature flow satisfies
\begin{enumerate}
  \item  $L_0\in \cA(\kappa_0, r_0, \La_0, \ee_0)$ and $\vol(L_0)\leq V_0$,

  \item $L_t\in \cA(\frac 13\kappa_0, r_0, 6\La_0, 2\ee_0^{\frac 1{n+2}})(t\in [0, T])$,
\end{enumerate}
Then we have the following properties
\begin{enumerate}
  \item[(a)] The mean curvature vector satisfies
  $$\max_{L_t}|H|(t)\leq \ee_0^{\frac 1{n+2}}e^{\frac {\bar R}{2n(n+2)}t},\quad t\in [\tau, T].$$
  \item[(b)] The second fundamental form $$\max_{L_t}|A|\leq 3\La_0,\quad t\in [0, T].$$
  \item[(c)] $L_t$ is $\frac 23\kappa_0$-noncollapsed on the scale $r_0$ for $t\in [0, T].$
\end{enumerate}
Thus, the solution $L_t\in\cA(\frac 23\kappa_0, r_0, 3\La_0, \ee_0^{\frac 1{n+2}})$ for $t\in [0, T],$ and
by Lemma \ref{lem:initial} we can
extend the solution to $[0, T+\dd]$ such that $L_t\in \cA(\frac 13\kappa_0, r_0, 6\La_0, 2\ee_0^{\frac 1{n+2}})(t\in [0, T+\dd])$
for some $\dd=\dd(n, \La_0, K_1)>0.$
\end{lem}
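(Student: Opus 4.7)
The statement is a continuity/bootstrap improvement: starting from the rough a priori bounds $|A| \leq 6\La_0$, $|H| \leq 2\ee_0^{1/(n+2)}$, and $\tfrac{1}{3}\kappa_0$-noncollapsing on $[0,T]$, I would upgrade these to the sharper bounds (a)--(c) by choosing $\ee_0$ sufficiently small (depending on $\kappa_0, r_0, \La_0, n, K_5, V_0$). Once obtained, the improvement is strict, so Lemma \ref{lem:initial} extends the solution past $T$ by the fixed amount $\dd = \dd(n, \La_0, K_1)$ independent of $T$, closing the bootstrap and justifying the iteration that will be used in the proof of Theorem \ref{main}.

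\textbf{Bounds (a) and (c).} For (a), I would apply Lemma \ref{Lem:H}, inequality (\ref{lem:H:eq1}), with $\La = 6\La_0$ and $\ee = 2\ee_0^{1/(n+2)}$ to get $\tfrac{d}{dt}\int|H|^2 \leq \bigl(\tfrac{\bar R}{n} + 24\La_0 \ee_0^{1/(n+2)}\bigr)\int|H|^2$. Since $\bar R<0$, picking $\ee_0$ small makes this bracket at most $\bar R/(2n)$, so Gronwall together with the initial datum $\int_{L_0}|H|^2 \leq V_0\ee_0^2$ yields $\int_{L_t}|H|^2 \leq V_0 \ee_0^2 e^{\bar R t/(2n)}$. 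Applying Lemma \ref{lem:higer} on the subinterval $[t-\tfrac{\tau}{2},t]$ for $t\geq\tau$ supplies a $C^1$ bound $|\Na H| \leq C(\La_0, K_2, \tau)$, and with the noncollapsing hypothesis Lemma \ref{lem:f} promotes the $L^2$ bound to $\max |H|(t) \leq C V_0^{1/(n+2)}\ee_0^{2/(n+2)} e^{\bar R t/(2n(n+2))}$; absorbing the prefactor for $\ee_0$ small produces (a). For (c), Lemma \ref{lem:vol} reduces the task to showing that $E(t) = \int_0^t \max(|A||H| + |H|^2)\,ds$ is small; with Lemma \ref{lem:zero} controlling $|H|$ by $2\ee_0$ on $[0,\tau]$ and the exponential decay from (a) active thereafter, the integral converges uniformly in $T$ with $E(t) = O(\ee_0^{1/(n+2)})$, so for $\ee_0$ small we obtain $e^{-(n+1)E(t)} \geq \tfrac{2}{3}$.

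\textbf{The main obstacle: bound (b).} Upgrading $|A| \leq 6\La_0$ to $|A| \leq 3\La_0$ is the delicate step. On $[0,\tau]$ Lemma \ref{lem:zero} already gives $|A| \leq 2\La_0$. For $t \in [\tau, T]$, the plan is to exploit the decay of $|H|$ through an integral estimate: multiplying the evolution equation (\ref{lem:AH:eq6}) by $h^{\al}_{ij}$, integrating by parts with Codazzi to convert $h\cdot\Na^2 H$ into $-|\Na H|^2$ up to bounded curvature clutter, and accounting for the volume form's evolution produces $\tfrac{d}{dt}\int|A|^2 \leq -2\int|\Na H|^2 + (\text{terms each carrying a factor of }|H|)$. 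Cauchy--Schwarz with the exponential $L^2$ decay of $|H|$ just established makes every perturbation term time-integrable with bound $O(\ee_0^{1/(n+2)})$, so $\int_{L_t}|A|^2$ stays within $o(1)$ of $\int_{L_0}|A|^2 \leq V_0\La_0^2$ uniformly in $T$. Combining this with the $C^1$ bound from Lemma \ref{lem:higer} and the noncollapsing, Lemma \ref{lem:f} converts $L^2$ back to $L^\infty$. The subtlest point is matching the constant produced by Lemma \ref{lem:f} against the target $3\La_0$; since the raw bound $C(V_0\La_0^2)^{1/(n+2)}$ need not equal $3\La_0$, this is best closed by a contradiction at the first time $t_*$ where $|A|$ would reach $3\La_0$: there the tighter bound $|A|\leq 3\La_0$ on $[0,t_*]$ improves the constants in Lemma \ref{lem:higer} (from dependence on $6\La_0$ to dependence on $3\La_0$), and after shrinking $\ee_0$ a further amount depending on these sharpened constants the refined estimate forces $|A|(t_*) < 3\La_0$, yielding the contradiction. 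This final matching of constants is where I expect the main technical effort of the proof to lie.
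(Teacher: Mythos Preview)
Your treatment of (a) and (c) is essentially the paper's argument and is fine.

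The gap is in (b). Your plan is to control $\int_{L_t}|A|^2$ within $o(1)$ of $\int_{L_0}|A|^2 \leq V_0\La_0^2$ and then invoke Lemma~\ref{lem:f}. But Lemma~\ref{lem:f} applied to $S=A$ would output a bound of the shape $\max|A| \lesssim C(\kappa_0)^{-1/2}(V_0\La_0^2)^{1/(n+2)} + |\Na A|\cdot(V_0\La_0^2)^{1/(n+2)}$, and this quantity has \emph{no dependence on $\ee_0$} --- so it cannot be made $\leq 3\La_0$ by shrinking $\ee_0$. Your proposed contradiction at the first time $t_*$ with $|A|(t_*)=3\La_0$ does not rescue this: replacing $6\La_0$ by $3\La_0$ in the constants from Lemma~\ref{lem:higer} still leaves you with the same $\ee_0$-independent output from Lemma~\ref{lem:f}, and there is nothing to close. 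The underlying issue is that an $L^2$ bound on $A$ of size $V_0\La_0^2$ is simply not small, so the $L^2\to L^\infty$ mechanism of Lemma~\ref{lem:f} (which is designed for \emph{small} $L^2$ norm) is the wrong tool here.

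The paper bypasses this entirely by working pointwise. From (\ref{eq:LH2}) one has
\[
\pd{}t|A| \;\leq\; |\Na^2 H| + c(n)|A|^2|H| + |\bar{Rm}|\,|H|,
\]
so it suffices to show that $|\Na^2 H|$ decays exponentially \emph{pointwise}. For this the paper uses the interpolation
\[
\int_{L_t}|\Na^2 H|^2 \;\leq\; \int_{L_t}|H|\,|\Na^4 H| \;\leq\; V_0\,C_4\,\ee_0^{1/(n+2)}e^{\frac{\bar R}{2n(n+2)}t},
\]
where $|\Na^4 H|\leq C_4$ comes from Lemma~\ref{lem:higer} (this is where $K_5$ enters) and the $|H|$ factor from part~(a). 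Now Lemma~\ref{lem:f} applied to $S=\Na^2 H$ --- whose $L^2$ norm \emph{is} small --- gives a pointwise bound $|\Na^2 H|\leq C\,\ee_0^{1/(n+2)^2}e^{\frac{\bar R}{2n(n+2)^2}t}$. Integrating $\pd{}t|A|$ from $\tau$ to $t$ then yields $|A|(t)\leq |A|(\tau)+O(\ee_0^{1/(n+2)^2})\leq 2\La_0 + O(\ee_0^{1/(n+2)^2})\leq 3\La_0$ directly, with no contradiction argument and no constant-matching problem. The moral: apply Lemma~\ref{lem:f} to the small quantity $\Na^2 H$, not to $A$.
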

\begin{proof}$(a)$. For any $\La_0>0,$ we can choose $\ee_0$ small enough such that $12\La_0 \ee_0^{\frac 1{n+2}}<-\frac {\bar R}{4n}.$
Thus, by Lemma \ref{Lem:H} the mean curvature vector satisfies
\beq\int_{L_t}\; |H|^2d\mu_t\leq e^{\frac {\bar R}{2n}t}\int_{L_0}\;
 |H|^2d\mu_0\leq V_0\ee_0^2e^{\frac {\bar R}{2n}t},\quad t\in [0, T].\label{lem:main:eq1}\eeq
Note that $L_t\in \cA(\frac 13\kappa_0, r_0, 6\La_0, 2\ee_0^{\frac
1{n+2}})$ for $t\in [0, T]$, by Lemma \ref{lem:higer} and Remark
\ref{rem} there is a constant $C_1=C_1(n, \La_0, K_2)$ such that
\beq |\Na A|(t)\leq C_1(n, \La_0, K_2, \tau),\quad t\in [\tau,
T].\label{lem:main:eq2} \eeq Here we can choose $\tau=\tau(n, \La_0,
K_1)$ in Lemma \ref{lem:initial}. Thus, by Lemma \ref{lem:f} and
(\ref{lem:main:eq1})(\ref{lem:main:eq2}) we have \beq |H|(t)\leq
\Big(\sqrt{\frac 3{\kappa_0}}+C_1\Big)V_0^{\frac 1{n+2}}\ee_0^{\frac
2{n+2}}e^{\frac {\bar R}{2n(n+2)}t},\quad t\in [\tau, T]. \eeq where
we have used the fact that $L_t$ is $\frac {\kappa}3$-noncollapsed
on the scale $r_0$ and $V_0\ee_0^2\leq r_0^{n+2}$ if $\ee_0$ is
small enough. Thus, if $\ee_0$ is  small such that $\Big(\sqrt{\frac
3{\kappa_0}}+C_1\Big)V_0^{\frac 1{n+2}}\ee_0^{\frac 1{n+2}}\leq 1,$
then we have \beq |H|(t)\leq \ee_0^{\frac 1{n+2}}e^{\frac {\bar
R}{2n(n+2)}t},\quad t\in [\tau, T].\n \eeq

$(b)$. By Lemma \ref{lem:higer} and Remark \ref{rem} there exist
some constants $C_k=C_k(n, \La_0, K_{k+1})$ such that \beq |\Na^k
A|(t)\leq C_{k}(n, \La_0, K_{k+1}, \tau),\quad t\in [\tau,
T].\label{lem:main:eq3} \eeq By Lemma \ref{lem:higer} and Property
$(a)$, we have \beq \int_{L_t}|\Na^2 H|^2d\mu_t\leq
\int_{L_t}|H||\Na^4H|d\mu_t\leq V_0C_4\ee_0^{\frac 1{n+2}} e^{\frac
{\bar R}{2n(n+2)}t} ,\quad t\in [\tau, T], \eeq where we used the
fact that $\vol(L_t)$ is decreasing along the flow since
$$\pd {}t\vol(L_t)=-\int_{L_t}|H|^2d\mu_t\leq 0.$$
Thus, by Lemma \ref{lem:f} we have
\beq |\Na^2H|\leq \Big(\sqrt{\frac 3{\kappa_0}}+C_3\Big)C_4^{\frac 1{n+2}}V_0^{\frac 1{n+2}}
\ee_0^{\frac 1{(n+2)^2}}e^{\frac {\bar R}{2n(n+2)^2}t},\quad t\in [\tau, T].\label{lem:main:eq4}\eeq
Recall that by Lemma \ref{lem:LH} $|A|$ satisfies the inequality
\beq
\pd {}t|A|\leq |\Na^2H|+c(n)|A|^2|H|+|\bar Rm||H|.\label{lem:main:eq5}
\eeq
Thus, by Lemma \ref{lem:zero}, (\ref{lem:main:eq4})(\ref{lem:main:eq5})  and $(a)$ we have
\beqn
|A|(t)&\leq&|A|({\tau})+\int_{\tau}^t |\Na^2H|+(K_0+|A|^2)|H|\n\\
&\leq &2\La_0+\Big(\sqrt{\frac 3{\kappa_0}}+C_3\Big)C_4^{\frac 1{n+2}}V_0^{\frac 1{n+2}}
\ee_0^{\frac 1{(n+2)^2}} \frac {2n(n+2)^2}{|\bar R|}\n\\
&&+(K_0+36\La_0^2)\ee_0^{\frac 1{n+2}}\frac {2n(n+2)}{|\bar R|}\n\\
&\leq&3\La_0,
\eeqn
if we choose $\ee_0$ sufficiently small.

(3). By (\ref{lem:vol:eq1}), Lemma \ref{lem:initial} Property $(a)(b)$ we have
\beqs E(t)&\leq& \int_{0}^{\tau}\;\max_{L}(|A||H|+|H|^2)\;ds+\int_{\tau}^t\;\max_{L}(|A||H|+|H|^2)\;ds\\
&\leq&4\La_0\ee_0\tau+4\ee_0^2\tau+3\La_0\ee_0^{\frac 1{n+2}} \frac {2n(n+2)}{\bar R}+\ee_0^{\frac 2{n+2}}\frac {n(n+2)}{\bar R}\\
&\leq &\frac 1{n+1}\log \frac 32, \quad t\in [0, T],
\eeqs
where $\ee_0$ is small enough. Thus, by Lemma \ref{lem:vol} \,$ L_t$ is $\frac 23\kappa_0$-noncollapsed on the scale $r_0$ for $t\in [0, T].$

\end{proof}

Now we can prove the following stability result, which needs the
noncollapsing condition of the initial submanifold. This condition
can be removed by the comparison theorem in Theorem \ref{main}.

\begin{theo}\label{main01}
Let $(M, \bar g)$ be a complete K\"ahler-Einstein manifold satisfying (\ref{eq:00}) with scalar
curvature $\bar R<0$, and $L$ be a compact Lagrangian submanifold
smoothly immersed in $M$. For any $\kappa_0, r_0, V_0, \La_0>0$,
there exists $\ee_0=\ee_0(\kappa_0, r_0, V_0, \La_0, \bar R, K_5)$
such that if $L$ is $\kappa_0$-noncollapsed on the scale $r_0$ and
satisfies
$$\vol(L)\leq V_0, \quad |A|\leq \La_0,\quad  |H|\leq \ee_0,$$
 then the Lagrangian mean  curvature flow with the initial data $L$ will converge exponentially fast to a minimal
Lagrangian submanifold in $M.$
\end{theo}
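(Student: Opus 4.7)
The plan is a continuity argument in time built on Lemma \ref{lem:main} together with the short-time persistence of Lemma \ref{lem:initial}. The hypothesis $\bar R<0$ enters only through the decay rate $\bar R/(2n(n+2))$ in assertion $(a)$ of Lemma \ref{lem:main}; all the hard $C^{0}$ estimates are already packaged in that lemma, so the present task reduces to running the standard open/closed machinery in $T$ and then extracting smooth exponential convergence from the resulting uniform bounds.

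First I would set up the continuity argument. Choose $\ee_0$ small enough that Lemma \ref{lem:main} applies and that $\ee_0\leq 1$ (so that $2\ee_0\leq 2\ee_0^{1/(n+2)}$), and define
$$
T^{*}=\sup\Bigl\{T\geq 0\,:\, L_t \text{ exists on } [0,T] \text{ and } L_t\in\cA\bigl(\tfrac13\kappa_0, r_0, 6\La_0, 2\ee_0^{1/(n+2)}\bigr) \text{ for all } t\in[0,T]\Bigr\}.
$$
Lemma \ref{lem:initial} applied to $L_0\in\cA(\kappa_0,r_0,\La_0,\ee_0)$ gives $T^{*}\geq\tau(n,\La_0,K_1)>0$. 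Suppose for contradiction $T^{*}<\infty$; by continuity of $|A|$, $|H|$ and of the volume ratios in $t$, the rough bounds persist at $t=T^{*}$, so Lemma \ref{lem:main} applies on $[0,T^{*}]$. Its conclusions sharpen these bounds to $\cA(\tfrac23\kappa_0,r_0,3\La_0,\ee_0^{1/(n+2)})$ on $[0,T^{*}]$ and, via the final clause of that lemma, extend the rough bounds to $[0,T^{*}+\delta]$ for some $\delta=\delta(n,\La_0,K_1)>0$, contradicting the definition of $T^{*}$. Hence $T^{*}=\infty$, the flow exists for all time, and for every $t\geq \tau$ we have
$$
\max_{L_t}|H|(t)\leq \ee_0^{1/(n+2)}\,e^{\bar R t/(2n(n+2))},\qquad \max_{L_t}|A|\leq 3\La_0,
$$
together with $|\nabla^{k}A|\leq C_k$ for every $k\geq 0$ by Lemma \ref{lem:higer}.

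Next I would extract exponential convergence. Since $\p_t g_{ij}=-2H^{k}h^{k}_{ij}$, we have $|\p_t g|\leq 2|A||H|$ decaying exponentially, so the induced metrics $g(t)$ are uniformly equivalent to $g(\tau)$ on $[\tau,\infty)$. Combining the exponential decay of $\|H\|_{C^{0}}$ with the uniform $C^{k}$ bounds on $A$ (hence on $H$, which is a trace of $A$) and standard Gagliardo--Nirenberg interpolation on the fixed background $(L,g(\tau))$ promotes this to
$$
\|H(t)\|_{C^{k}(L)}\leq C_k\,e^{-c_k t},\qquad t\geq \tau,
$$
for every $k\geq 0$, with $c_k>0$. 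Since $\p_t F=H$, the immersions $F(\cdot,t)$ are exponentially Cauchy in every $C^{k}$, so they converge smoothly and exponentially to a limit immersion $F_\infty\colon L\to M$. The limit $L_\infty=F_\infty(L)$ is Lagrangian as a smooth limit of Lagrangians and satisfies $H_{F_\infty}\equiv 0$, i.e.\ $L_\infty$ is minimal.

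The genuine obstacle is entirely absorbed into Lemma \ref{lem:main}: once its conclusions are granted, the present theorem follows mechanically. The subtlety worth watching when tuning $\ee_0$ is that the rough bound $2\ee_0^{1/(n+2)}$ assumed in Lemma \ref{lem:main} must be consistent with the bound one needs to re-verify after the short-time extension (which forces $\ee_0\leq 1$), and that $2\ee_0^{1/(n+2)}$ be sufficiently small relative to $|\bar R|/(4n\La_0)$ for the negative scalar-curvature term in Lemma \ref{Lem:H} to dominate the $2\La\ee$ error; both conditions fix $\ee_0$ in terms of $\kappa_0, r_0, V_0, \La_0, \bar R, K_5$ as claimed.
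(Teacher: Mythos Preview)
Your proposal is correct and follows essentially the same continuity argument as the paper: define the maximal time on which the rough bounds of assumption~2 in Lemma~\ref{lem:main} persist, use that lemma to sharpen the bounds and extend by a fixed $\delta$, derive a contradiction to finiteness, and conclude exponential decay of $|H|$ together with uniform control on $|A|$. Your treatment of the convergence step (metric equivalence, interpolation to $C^k$ decay of $H$, and the Cauchy argument for $F(\cdot,t)$) is in fact more detailed than the paper's, which simply asserts convergence once the decay is in hand.
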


\begin{proof}. Suppose that $L_0\in \cA(\kappa_0, r_0, \La_0, \ee_0)$
for any positive constants $\kappa_0, r_0, \La_0$ and small $\ee_0$ which will be chosen later. Define
$$t_0=\sup\Big\{t>0\;\Big|\; L_s\in \cA(\frac 13\kappa_0, r_0, 6\La_0, 2\ee_0^{\frac 1{n+2}} ),\;\;s\in [0, t)\Big\}.$$
Suppose that $t_0<+\infty.$ By Lemma \ref{lem:main}, there exists $\ee_0=\ee_0(\kappa_0, r_0, \La_0, n, K_5, V_0)$
such that $L_t\in\cA(\frac 23\kappa_0, r_0, 3\La_0, \ee_0^{\frac 1{n+2}})$ for all $t\in [0, t_0).$
Moreover, by Lemma \ref{lem:main} again the solution $L_t$ can be extended to $[0, t_0+\dd]$ such that
$L_t\in \cA(\frac 13\kappa_0, r_0, 6\La_0, 2\ee_0^{\frac 1{n+2}})$, which contradicts the definition of $t_0$.
Thus, $t_0=+\infty$ and $$L_t\in \cA(\frac 13\kappa_0, r_0, 6\La_0, 2\ee_0^{\frac 1{n+2}} ),\quad t\in [0, \infty).$$
By Lemma \ref{lem:main} the mean curvature vector will decay exponentially to zero and the flow will
converge to a smooth minimal Lagrangian submanifold. The theorem is proved.

\end{proof}

We can finish the proof of Theorem \ref{main} as follows:

\begin{proof}[Proof of Theorem \ref{main}] We will show that under the assumption of Theorem \ref{main},
the flow $L_t$ will satisfies all the conditions in Theorem \ref{main01} after a short time. Suppose that the initial Lagrangian submanifold $L$ satisfies
(\ref{main01:eq1}), by Lemma \ref{lem:zero} there exists $T=T(n, \La, K_1)$ such that
\beq |A|(t)\leq 2\La,\quad t\in [0, T].\label{main01:eq3}\eeq

\medskip

We claim that there exists $t_0=t_0(n, \La, K_1)<T$ such that
the $L^2$ norm of the mean curvature vector satisfies\beq
\int_{L_t}\;|H|^2\,d\mu_t\leq 2\ee_0,\quad t\in [0, t_0].\label{main01:eq2}
\eeq
In fact, by (\ref{lem:H:eq2}) in Lemma \ref{Lem:H} we have
\beqn
\pd
{}t\int_{L_t}\;|H|^2d\mu_t&\leq&\int_{L_t}\;2\bar R_{ij}H^iH^j-2H^iH^jH^kh_{jk}^i-|H|^4\n\\
&\leq&\Big(\frac {\bar R}n+4\La^2\Big)\int_{L_t}\;|H|^2d\mu_t,
\eeqn
where we used (\ref{main01:eq3}) and the inequality
$$2H^iH^jH^kh_{jk}^i\leq 4\La^2|H|^2+|H|^4.$$
Thus, we have
\beq
\int_{L_t}\;|H|^2d\mu_t\leq e^{4\La^2 t}\int_{L_0}\;|H|^2d\mu_0\leq \ee_0e^{4\La^2 t},\quad t\in [0, T].
\eeq
which proves (\ref{main01:eq2}) if we choose $t_0$ sufficiently small.

\medskip

Now we prove that there exist $\kappa_0, r_0>0$ such that $L_t$ is $\kappa_0$-noncollapsed on the
scale $r_0$ for $t\in [\frac 1{2}t_0, t_0].$ In fact,
by Proposition 2.2 in \cite{[CH]} or Theorem 2.1 in \cite{[CY]} the injectivity radius of $L$
is bounded from below
\beq inj(L_t)\geq \iota,\quad t\in [\frac 12t_0, t_0]\label{main01:eq4}\eeq for some constant
 $\iota=\iota(n, \La, K_0, \iota_0)$. By (\ref{main01:eq3}) and by Gauss equation the intrinsic curvature of $L_t$
is uniformly bounded
\beq |Rm|\leq C(K_0, \La), \quad t\in [\frac 12t_0, t_0].\label{main01:eq5}\eeq
By (\ref{main01:eq4})(\ref{main01:eq5}) together with the volume comparison theorem, there exist $\kappa_0=\kappa_0(n, \iota_0, K_0, \La)$ and
$r_0=r_0(n, \iota_0, K_0, \La)$ such that $L_t$ is $\kappa_0$-noncollapsed on the scale $r_0$ for all $t\in [\frac 12t_0, t_0].$

\medskip

By (\ref{main01:eq3}) and Lemma \ref{lem:higer}  the derivative of the second fundamental form is uniformly bounded
$$|\Na A|\leq C_1(n, \La, K_2),\quad t\in [\frac 12 t_0, t_0].$$
Now we can apply Lemma \ref{lem:f}  to show that
$$|H|(t)\leq \Big(\frac 1{\sqrt{\kappa_0}}+2C_1\Big)(2\ee_0)^{\frac 1{n+2}}\quad t\in [\frac 12 t_0, t_0].$$

In summary, all the conditions in Theorem \ref{main01} are satisfied for $L_t(t\in [\frac 12t_0, t_0])$, and thus Theorem \ref{main} is proved.

\end{proof}
\section{Proof of Theorem \ref{main2}}
In this section, we will prove Theorem \ref{main2}. The idea of the
proof is similar to that of Theorem \ref{main} but more involved
since we need to consider the evolution of the first eigenvalue.

   For any positive constants $\dd, \kappa, r, \La, \ee,$
we define the following subspace of Lagrangian submanifolds in $M$ by
\beqs \cB(\kappa, r, \dd, \La, \ee)=\Big\{\;L &\Big|\;\;L \hbox{ is $\kappa$-noncollapsed on the scale $r$ with}\\
&\; \la_1\geq \frac {\bar R}{2n}+\dd,\;\;|A|(t)\leq \La, \;\;|H|(t)\leq \ee\;\Big\}.\eeqs

\begin{lem}\label{lem:initial2}If the initial Lagrangian submanifold $L_0\in \cB(\kappa, r, \dd, \La, \ee)$,
then there exists $\tau=\tau(n, \La, \dd, K_2, \bar R)>0$ such that $L_t\in \cB(\frac 12\kappa, r, \frac {2\dd}3, 2\La, 2\ee)$ for $t\in [0, \tau].$
\end{lem}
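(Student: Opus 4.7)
The plan is to verify, one by one, that all five conditions defining membership in $\cB(\kappa/2, r, 2\dd/3, 2\La, 2\ee)$ are preserved for a short time when $L_0\in \cB(\kappa, r, \dd, \La, \ee)$. Four of them—the bounds on $|A|$, $|H|$, and the $\kappa$-noncollapsing on scale $r$—are handled exactly as in Lemma \ref{lem:initial}: the second fundamental form and mean curvature estimates come from Lemma \ref{lem:zero}, giving existence of $\tau_1=\tau_1(n,\La,K_1)$ with $|A|(t)\leq 2\La$ and $|H|(t)\leq 2\ee$ on $[0,\tau_1]$; and the noncollapsing follows by plugging these into Lemma \ref{lem:vol}, yielding a possibly smaller $\tau_2=\tau_2(n,\La,\ee,\kappa)$ such that $L_t$ is $\kappa/2$-noncollapsed on scale $r$ throughout $[0,\tau_2]$ (the exponent $E(t)$ in Lemma \ref{lem:vol} becomes less than $(n+1)^{-1}\log 2$ for short time).

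The only genuinely new ingredient is the first-eigenvalue condition, and here I would invoke Lemma \ref{lem:eig}(1) with the (temporarily worsened) bound $|A|\leq 2\La$. For any $\dd_1>0$ this produces $t_0=t_0(n,2\La,K_2,\dd_1)$ with
\[
\sqrt{\la_1(t)}\geq (1-\dd_1)\sqrt{\la_1(0)}-\dd_1,\qquad t\in[0,t_0].
\]
Since the right-hand side is increasing in $\la_1(0)$, the worst case is when $\la_1(0)$ equals its lower bound $A:=\bar R/(2n)+\dd$. Setting $B:=\bar R/(2n)+2\dd/3=A-\dd/3$, the desired inequality $\la_1(t)\geq B$ is equivalent to
\[
\sqrt{A}-\sqrt{B}\;\geq\;\dd_1(\sqrt{A}+1),
\]
and using $\sqrt{A}-\sqrt{B}=(\dd/3)/(\sqrt{A}+\sqrt{B})$ this holds whenever
\[
\dd_1\;\leq\;\frac{\dd/3}{(\sqrt{A}+1)(\sqrt{A}+\sqrt{B})}.
\]
The right-hand side depends only on $n$, $\bar R$, $\dd$, so we may fix such a $\dd_1=\dd_1(n,\bar R,\dd)$ and then let $\tau_3=t_0(n,2\La,K_2,\dd_1)$ be the associated time from Lemma \ref{lem:eig}(1).

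Taking $\tau=\min\{\tau_1,\tau_2,\tau_3\}$, which depends only on $n,\La,\dd,K_2,\bar R$ (absorbing $K_1\leq K_2$), all four conditions hold simultaneously on $[0,\tau]$, establishing $L_t\in\cB(\kappa/2,r,2\dd/3,2\La,2\ee)$. The only subtlety worth watching is the self-consistent use of the rough bound $|A|\leq 2\La$ inside Lemma \ref{lem:eig}(1), but since Lemma \ref{lem:zero} guarantees this bound first on an interval determined by $n,\La,K_1$ alone, there is no circularity. I do not expect any serious obstacle; the main thing to get right is the dependence of $\dd_1$ solely on $(n,\bar R,\dd)$ so that the final $\tau$ has the advertised parameter dependence.
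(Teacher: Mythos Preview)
Your approach is correct and is exactly the intended expansion of the paper's one-line proof; in particular you rightly invoke Lemma~\ref{lem:eig}(1) for the eigenvalue condition, which the paper's terse citation of Lemmas~\ref{lem:vol} and~\ref{lem:zero} omits. One bookkeeping point to clean up: you write $\tau_2=\tau_2(n,\La,\ee,\kappa)$ but then claim the final $\tau$ is independent of $\ee$ and $\kappa$; note that $\kappa$ cancels in the condition $\kappa\,e^{-(n+1)E(t)}\geq\kappa/2$, and the $\ee$-dependence can be removed by using $|H|\leq\sqrt{n}\,|A|\leq 2\sqrt{n}\,\La$ when bounding $E(t)$, so that in fact $\tau_2=\tau_2(n,\La)$ and your asserted parameter dependence goes through.
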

\begin{proof}This result follows directly from Lemma \ref{lem:vol} and Lemma \ref{lem:zero}.
\end{proof}

\begin{lem}\label{lem:main2}For any $\kappa_0, r_0, \dd_0, \La_0, V_0, T>0$ there exists $\ee_0=\ee_0(\kappa_0, r_0, \dd_0, \bar R, \La_0, n, K_5, V_0)>0$
such that if the solution $L_t(t\in [0, T])$ of the Lagrangian mean curvature flow satisfies
\begin{enumerate}
  \item  $L_0\in \cB(\kappa_0, r_0, \dd_0, \La_0, \ee_0)$ and $\vol(L_0)\leq V_0$,

  \item $L_t\in \cB(\frac 13\kappa_0, r_0, \frac 13\dd_0,  6\La_0, 2\ee_0^{\frac 1{n+2}})(t\in [0, T])$,
\end{enumerate}
Then we have the following properties
\begin{enumerate}
  \item[(a)] The mean curvature vector satisfies
  $$\max_{L_t}|H|(t)\leq \ee_0^{\frac 1{n+2}}e^{-\frac {\dd_0}{2(n+2)}t},\quad t\in [\tau, T].$$
  \item[(b)] The second fundamental form $$\max_{L_t}|A|\leq 3\La_0,\quad t\in [0, T].$$
  \item[(c)] $L_t$ is $\frac 23\kappa_0$-noncollapsed on the scale $r_0$ for $t\in [0, T].$
  \item[(d)] The first eigenvalue
  $$\la_1(t)\geq \frac {\bar R}{2n}+\frac {\dd_0}{2},\quad t\in [0, T].$$
\end{enumerate}
Thus, the solution $L_t\in\cB(\frac 23\kappa_0, r_0, \frac {\dd_0}{2}, 3\La_0, \ee_0^{\frac 1{n+2}})$ for $t\in [0, T],$ and
by Lemma \ref{lem:initial2} we can
extend the solution to $[0, T+\dd]$ such that $L_t\in \cB(\frac 13\kappa_0, r_0, \frac 13\dd_0, 6\La_0, 2\ee_0^{\frac 1{n+2}})(t\in [0, T+\dd])$
for some $\dd=\dd(n, \La_0, K_1)>0.$
\end{lem}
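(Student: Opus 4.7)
The proof will follow the same bootstrap template as Lemma \ref{lem:main}, but with the sign of the key estimate now supplied by the positivity of $\la_1 - \frac{\bar R}{2n}$ rather than by the negativity of $\bar R$. The additional new ingredient is property (d), which requires tracking the evolution of the first eigenvalue via Lemma \ref{lem:eig}. Throughout, I will implicitly use that exactness of the mean curvature form is preserved by Lemma \ref{lem:angle}, so that the second (sharper) inequality of Lemma \ref{Lem:H} is available along the flow.

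\smallskip

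\textbf{Step 1 (property (a)).} Under the bootstrap hypothesis $L_t\in\cB(\tfrac13\kappa_0,r_0,\tfrac13\dd_0,6\La_0,2\ee_0^{1/(n+2)})$, we have $\la_1-\frac{\bar R}{2n}\geq \tfrac13\dd_0$ and $|A||H|\leq 12\La_0\ee_0^{1/(n+2)}$. Choosing $\ee_0$ small enough that $12\La_0\ee_0^{1/(n+2)}\leq \dd_0/12$, the second inequality of Lemma \ref{Lem:H} gives
\[
\frac{d}{dt}\int_{L_t}|H|^2\,d\mu_t \;\leq\; -\tfrac{\dd_0}{2}\int_{L_t}|H|^2\,d\mu_t,
\]
so integrating yields $\int_{L_t}|H|^2\,d\mu_t\leq V_0\ee_0^2 e^{-\dd_0 t/2}$. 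Combining the pointwise gradient bound $|\Na H|\leq C_1(n,\La_0,K_2,\tau)$ on $[\tau,T]$ from Lemma \ref{lem:higer} with Lemma \ref{lem:f} and the noncollapsing constant $\tfrac13\kappa_0$, one obtains the claimed pointwise decay $|H|(t)\leq \ee_0^{1/(n+2)}e^{-\frac{\dd_0}{2(n+2)}t}$ for $t\in[\tau,T]$, after possibly shrinking $\ee_0$.

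\smallskip

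\textbf{Step 2 (properties (b) and (c)).} These follow verbatim the argument of Lemma \ref{lem:main}, with the exponential factor $e^{\bar R t/(2n(n+2))}$ replaced by $e^{-\dd_0 t/(2(n+2))}$. For (b), integration by parts gives $\int|\Na^2 H|^2\leq \int |H|\,|\Na^4 H|$; bounding $|\Na^4 H|$ by Lemma \ref{lem:higer}, using the volume monotonicity $\tfrac{d}{dt}\vol(L_t)=-\int|H|^2\leq 0$, and applying Lemma \ref{lem:f} yields a small pointwise bound on $|\Na^2 H|$. Then the evolution inequality $\pd{}t|A|\leq |\Na^2 H|+c(n)|A|^2|H|+|\bar{Rm}|\,|H|$ from (\ref{lem:main:eq5}), integrated in time, gives $|A|\leq 3\La_0$. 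For (c), the exponential decay of $|H|$ and the bound on $|A|$ yield $E(t)\leq \frac{1}{n+1}\log\frac{3}{2}$ for small $\ee_0$, and Lemma \ref{lem:vol} produces the improved noncollapsing constant $\tfrac23\kappa_0$.

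\smallskip

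\textbf{Step 3 (property (d)).} This is where the new work lies. Split $[0,T]$ into $[0,\tau]$ and $[\tau,T]$. On $[0,\tau]$, apply Lemma \ref{lem:eig}(1) with $\dd$ chosen so small that $\sqrt{\la_1(t)}\geq (1-\dd)\sqrt{\la_1(0)}-\dd$ implies $\la_1(t)\geq \frac{\bar R}{2n}+\frac{2\dd_0}{3}$ (which is the content already captured in Lemma \ref{lem:initial2} and provides the short-time estimate). On $[\tau,T]$, reset the time origin at $\tau$; the bounds $|H|,|\Na H|\leq \tilde\ee\, e^{-\ga(t-\tau)}$ with $\tilde\ee=C\ee_0^{1/(n+2)}$ (using Lemma \ref{lem:higer} for $|\Na H|$) and $\ga=\frac{\dd_0}{2(n+2)}$ satisfy the hypothesis of Lemma \ref{lem:eig}(2), giving
\[
\sqrt{\la_1(t)}\;\geq\; \sqrt{\la_1(\tau)}\,e^{-\frac{1}{2\ga}(2\La\tilde\ee+\tilde\ee^{\,2})}\;-\;\frac{(K_0+\La)\tilde\ee}{\ga}.
\]
As $\ee_0\to 0$, the right side converges to $\sqrt{\la_1(\tau)}\geq \sqrt{\tfrac{\bar R}{2n}+\tfrac{2\dd_0}{3}}$, so for $\ee_0$ sufficiently small we conclude $\la_1(t)\geq \frac{\bar R}{2n}+\frac{\dd_0}{2}$ on all of $[0,T]$.

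\smallskip

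\textbf{Main obstacle.} The delicate point is the consistency of the choice of $\ee_0$: Step 1 requires the bootstrap eigenvalue lower bound $\la_1\geq\frac{\bar R}{2n}+\frac{\dd_0}{3}$ to produce the exponential decay of $|H|$, while Step 3 uses that decay to re-prove a \emph{stronger} eigenvalue lower bound $\la_1\geq\frac{\bar R}{2n}+\frac{\dd_0}{2}$. Thus the estimates close the loop with definite slack, analogous to the improvement from $6\La_0$ to $3\La_0$ in (b) and from $\tfrac13\kappa_0$ to $\tfrac23\kappa_0$ in (c). Once all four improvements are in hand, Lemma \ref{lem:initial2} allows a short extension of the flow preserving the bootstrap class with constants $(\tfrac13\kappa_0,r_0,\tfrac13\dd_0,6\La_0,2\ee_0^{1/(n+2)})$, finishing the proof.
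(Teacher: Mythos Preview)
Your proposal is correct and follows essentially the same approach as the paper's own proof: the $L^2$ decay via Lemma~\ref{Lem:H}, the upgrade to pointwise decay via Lemma~\ref{lem:f}, the integration of the $|A|$ evolution inequality, the $E(t)$ bound for noncollapsing, and the two-interval treatment of $\la_1$ via Lemma~\ref{lem:initial2} on $[0,\tau]$ and Lemma~\ref{lem:eig}(2) on $[\tau,T]$ all mirror the paper exactly.

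One point to sharpen in Step~3: Lemma~\ref{lem:higer} by itself only gives a uniform (non-decaying) bound $|\Na H|\leq C_1$, which is not enough for the hypothesis $|H|+|\Na H|\leq \ee e^{-\ga t}$ of Lemma~\ref{lem:eig}(2). The paper obtains the exponential decay of $|\Na H|$ by the same interpolation trick you used in Step~2 for $|\Na^2 H|$: write $\int_{L_t}|\Na H|^2\leq \int_{L_t}|H|\,|\Na^2 H|$, bound $|\Na^2 H|$ uniformly by Lemma~\ref{lem:higer}, and then apply Lemma~\ref{lem:f} to pass to the pointwise estimate $|\Na H|\leq C\ee_0^{1/(2(n+2)^2)}e^{-\dd_0 t/(2(n+2)^2)}$. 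With that correction your Step~3 goes through and matches the paper.
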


\begin{proof}$(a)$. By assumption 2, the first eigenvalue satisfies $\la_1(t)\geq \frac {\bar R}{2n}+\frac {\dd_0}3$.
Thus, we have
\beq
\la_1-\frac {\bar R}{2n}-12\La_0 \ee^{\frac 1{n+2}}\geq \frac {\dd_0}{4},\quad t\in [0, T] \n
\eeq when $\ee_0$ is small enough.
Thus, by (\ref{lem:H:eq3}) in Lemma \ref{Lem:H} the mean curvature vector satisfies
\beq\int_{L_t}\; |H|^2d\mu_t\leq e^{-\frac {\dd_0}{2}t}\int_{L_0}\;
 |H|^2d\mu_0\leq V_0\ee_0^2e^{-\frac {\dd_0}{2}t},\quad t\in [0, T].\label{lem:main2:eq1}\eeq
Note that $L_t\in \cB(\frac 13\kappa_0, r_0, \frac 13\dd_0,  6\La_0,
2\ee_0^{\frac 1{n+2}})$ for $t\in [0, T]$, by Lemma \ref{lem:higer}
and Remark \ref{rem} there is a constant $C_1=C_1(n, \La_0, K_2)$
such that \beq |\Na A|(t)\leq C_1(n, \La_0, K_2, \tau),\quad t\in
[\tau, T].\label{lem:main2:eq2} \eeq Here we can choose
$\tau=\tau(n, \La_0, \dd_0, K_2, \bar R)$ in Lemma
\ref{lem:initial2}. Thus, by Lemma \ref{lem:f} and
(\ref{lem:main2:eq1})(\ref{lem:main2:eq2}) we have \beq |H|(t)\leq
\Big(\sqrt{\frac 3{\kappa_0}}+C_1\Big)V_0^{\frac 1{n+2}}\ee_0^{\frac
2{n+2}}e^{-\frac {\dd_0}{2(n+2)}t},\quad t\in [\tau, T]. \eeq where
we have used the fact that $L_t$ is $\frac {\kappa}3$-noncollapsed
on the scale $r_0$ and $V_0\ee_0^2\leq r_0^{n+2}$ if $\ee_0$ is
small enough. Thus, if $\ee_0$ is  small such that $\Big(\sqrt{\frac
3{\kappa_0}}+C_1\Big)V_0^{\frac 1{n+2}}\ee_0^{\frac 1{n+2}}\leq 1,$
then we have \beq |H|(t)\leq \ee_0^{\frac 1{n+2}}e^{-\frac
{\dd_0}{2(n+2)}t},\quad t\in [\tau, T].\n \eeq

$(b)$. By Lemma \ref{lem:higer} and Remark \ref{rem} there exist
some constants $C_k=C_k(n, \La_0, K_{k+1})$ such that \beq |\Na^k
A|(t)\leq C_{k}(n, \La_0, K_{k+1}, \tau),\quad t\in [\tau,
T].\label{lem:main2:eq3} \eeq By Lemma \ref{lem:higer} and Property
$(a)$, we have \beq \int_{L_t}|\Na^2 H|^2d\mu_t\leq
\int_{L_t}|H||\Na^4H|d\mu_t\leq V_0C_4\ee_0^{\frac 1{n+2}} e^{-\frac
{\dd_0}{2(n+2)}t} ,\quad t\in [\tau, T].\n \eeq Thus, by Lemma
\ref{lem:f} we have \beq |\Na^2H|\leq \Big(\sqrt{\frac
3{\kappa_0}}+C_3\Big)C_4^{\frac 1{n+2}}V_0^{\frac 1{n+2}}
\ee_0^{\frac 1{(n+2)^2}}e^{-\frac {\dd_0}{2(n+2)^2}t},\quad t\in
[\tau, T].\label{lem:main2:eq4}\eeq Recall that by Lemma
\ref{lem:LH} $|A|$ satisfies the inequality \beq \pd {}t|A|\leq
|\Na^2H|+c(n)|A|^2|H|+|\bar Rm||H|.\label{lem:main2:eq5} \eeq Thus,
by Lemma \ref{lem:initial2},
(\ref{lem:main2:eq4})(\ref{lem:main2:eq5})  and $(a)$ we have \beqn
|A|(t)&\leq&|A|({\tau})+\int_{\tau}^t |\Na^2H|+(K_0+|A|^2)|H|\n\\
&\leq &2\La_0+\Big(\sqrt{\frac 3{\kappa_0}}+C_3\Big)C_4^{\frac 1{n+2}}V_0^{\frac 1{n+2}}
\ee_0^{\frac 1{(n+2)^2}} \frac {2(n+2)^2}{\dd_0}\n\\
&&+(K_0+36\La_0^2)\ee_0^{\frac 1{n+2}}\frac {2(n+2)}{\dd_0}\n\\
&\leq&3\La_0,
\eeqn
if we choose $\ee$ sufficiently small.

$(c)$. By (\ref{lem:vol:eq1}), Lemma \ref{lem:initial2} Property $(a)(b)$ we have
\beqs E(t)&\leq& \int_{0}^{\tau}\;\max_{L}(|A||H|+|H|^2)\;ds+\int_{\tau}^t\;\max_{L}(|A||H|+|H|^2)\;ds\\
&\leq&4\La_0\ee_0\tau+4\ee_0^2\tau+3\La_0\ee_0^{\frac 1{n+2}} \frac {2(n+2)}{\dd_0}+\ee_0^{\frac 2{n+2}}\frac {(n+2)}{\dd_0}\\
&\leq &\frac 1{n+1}\log \frac 32, \quad t\in [0, T],
\eeqs
where $\ee_0$ is small enough. Thus, by Lemma \ref{lem:vol}\,$ L_t$ is $\frac 23\kappa_0$-noncollapsed on the scale $r_0$ for $t\in [0, T].$

$(d)$. By (\ref{lem:main2:eq3}) and Property $(a)$ we have
\beq
\int_{L_t}\;|\Na H|^2\leq \int_{L_t}\;|H||\Na^2 H|\leq V_0 C_2\ee_0^{\frac 1{n+2}}e^{-\frac {\dd_0}{2(n+2)}t},\quad t\in [\tau, T]
\n\eeq
Thus, by Lemma \ref{lem:f} we have
\beqn |\Na H|&\leq& \Big(\sqrt{\frac 3{\kappa_0}}+C_2\Big)C_2^{\frac 1{n+2}}V_0^{\frac 1{n+2}}
\ee_0^{\frac 1{(n+2)^2}}e^{-\frac {\dd_0}{2(n+2)^2}t}\n\\
&\leq &\ee_0^{\frac 1{2(n+2)^2}}e^{-\frac {\dd_0}{2(n+2)^2}t},\quad t\in [\tau, T],\label{lem:main2:eq6}\eeqn
if $\ee_0$ is sufficiently small. Thus, we have
\beq
|H|+|\Na H|\leq 2\ee_0^{\frac 1{2(n+2)^2}}e^{-\frac {\dd_0}{2(n+2)^2}t}
\eeq
Note that by Lemma \ref{lem:initial2} the first eigenvalue
$$\la_1(t)\geq \frac {\bar R}{2n}+\frac {2\dd_0}{3},\quad t\in [0, \tau].$$
Thus, by (\ref{lem:eig:eq5}) in Lemma \ref{lem:eig} we have
\beqn
\sqrt{\la_1(t)}&\geq& \sqrt{\la_1(\tau)}e^{-\frac {(n+2)^2}{\dd_0}\cdot \Big( 24\La_0 \ee_0^{\frac 1{2(n+2)^2}}+4\ee_0^{\frac 1{(n+2)^2}}\Big)}
-\frac {2(n+2)^2}{\dd_0}(K_0+6\La_0) \cdot 2\ee_0^{\frac 1{2(n+2)^2}}\n\\
&\geq&\sqrt{\frac {\bar R}{2n}+\frac {2\dd_0}{3}}e^{-\frac {(n+2)^2}{\dd_0}\cdot \Big( 24\La_0 \ee_0^{\frac 1{2(n+2)^2}}+4\ee_0^{\frac 1{(n+2)^2}}\Big)}
-\frac {2(n+2)^2}{\dd_0}(K_0+6\La_0) \cdot 2\ee_0^{\frac 1{2(n+2)^2}}.\n
\eeqn Thus, if $\ee_0$ is small enough we have
$$\la_1(t)\geq \frac {\bar R}{2n}+\frac {\dd_0}{2},\quad t\in [0, T].$$
The lemma is proved.

\end{proof}

As in the proof of Theorem \ref{main}, we can see that Theorem \ref{main2} follows directly from
Lemma \ref{lem:eig} and the result:

\begin{theo}\label{main20}
Let $(M, \bar g)$ be a complete K\"ahler-Einstein manifold satisfying (\ref{eq:00}) with scalar
curvature $\bar R\geq 0$, and $L$ be a compact Lagrangian
submanifold smoothly immersed in $M$. For any $\kappa_0, r_0, V_0,
\La_0, \dd_0>0$, there exists $\ee_0=\ee_0(\kappa_0, r_0, V_0,
\La_0, \bar R, \dd_0,  K_5)>0$ such that if
\begin{enumerate}
\item  the mean curvature form of $L$ is exact,
  \item $L$ is $\kappa_0$-noncollapsed on the scale $r_0$,
  \item $L$ satisfies
  $$\la_1\geq \frac {\bar R}{2n}+\dd_0,\quad\vol(V)\leq V_0, \quad |A|\leq \La_0,\quad  |H|\leq \ee_0,$$
  where $\la_1$ is the first eigenvalue of the Laplacian operator with respect to the induced metric on $L,$
\end{enumerate}
 then the Lagrangian mean  curvature flow with the initial data $L$ will converge exponentially fast to a minimal
Lagrangian submanifold in $M.$

\end{theo}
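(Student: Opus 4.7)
The plan is to mirror the structure of the proof of Theorem \ref{main01}, since Theorem \ref{main20} is the noncollapsed stability version of Theorem \ref{main2} and all the necessary machinery has been assembled in Lemma \ref{lem:main2} (its analogue for the positive scalar curvature case with first eigenvalue gap). The only new feature compared to the negative case is that we must carry the first eigenvalue condition $\la_1\geq \bar R/2n+\dd_0$ along the flow, which is precisely what the class $\cB(\kappa,r,\dd,\La,\ee)$ encodes, and this is why the exactness of the mean curvature form (item 1 of the hypotheses) is essential: it is preserved along the Lagrangian mean curvature flow by Lemma \ref{lem:angle}, so that the $L^2$ decay inequality (\ref{lem:H:eq3}) is available throughout.

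First I would fix the parameters $\kappa_0, r_0, V_0, \La_0, \dd_0$ and choose $\ee_0=\ee_0(\kappa_0, r_0, V_0, \La_0, \bar R, \dd_0, K_5)$ small enough to satisfy the smallness constraints of Lemma \ref{lem:main2} with constants $\kappa_0, r_0, \dd_0, \La_0, V_0$. Under the hypotheses of Theorem \ref{main20}, the initial datum satisfies $L_0\in \cB(\kappa_0, r_0, \dd_0, \La_0, \ee_0)$ with $\vol(L_0)\leq V_0$ and exact mean curvature form. By Lemma \ref{lem:initial2}, for a short interval $[0,\tau]$ with $\tau=\tau(n,\La_0,\dd_0,K_2,\bar R)$, we have $L_t\in \cB(\tfrac12\kappa_0,r_0,\tfrac{2\dd_0}{3},2\La_0,2\ee_0)$; in particular $L_t\in \cB(\tfrac13\kappa_0,r_0,\tfrac{\dd_0}{3},6\La_0,2\ee_0^{1/(n+2)})$.

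Next I would define
\[
t_0=\sup\Big\{t>0\;\Big|\; L_s\in \cB(\tfrac13\kappa_0,r_0,\tfrac{\dd_0}{3},6\La_0,2\ee_0^{1/(n+2)}),\;\; s\in[0,t)\Big\}
\]
and argue by contradiction that $t_0=+\infty$. Assuming $t_0<+\infty$, the hypotheses of Lemma \ref{lem:main2} are fulfilled on $[0,t_0)$: assumption (1) is the initial data and assumption (2) is the definition of $t_0$. Lemma \ref{lem:main2} then upgrades these bounds to $L_t\in \cB(\tfrac23\kappa_0,r_0,\tfrac{\dd_0}{2},3\La_0,\ee_0^{1/(n+2)})$ on $[0,t_0)$, with the mean curvature decaying as $|H|(t)\leq \ee_0^{1/(n+2)}e^{-\dd_0 t/(2(n+2))}$ after time $\tau$. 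Applying Lemma \ref{lem:initial2} again starting at $t_0$ (with the improved bounds as initial data) extends the solution to $[0,t_0+\dd']$ still inside $\cB(\tfrac13\kappa_0,r_0,\tfrac{\dd_0}{3},6\La_0,2\ee_0^{1/(n+2)})$, contradicting the definition of $t_0$. Hence $t_0=+\infty$ and the improved bounds hold for all $t\geq 0$.

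The final step is to pass from the exponential $L^2$ and $C^0$ decay of $H$ together with the uniform $C^k$ bounds on $A$ (Lemma \ref{lem:higer}) to smooth subconvergence and then actual convergence of $F_t$ to a minimal Lagrangian limit: the exponential decay $|H|(t)\leq \ee_0^{1/(n+2)}e^{-\dd_0 t/(2(n+2))}$ gives $\int_0^\infty |H|\,dt<\infty$, hence $F_t(x)$ is Cauchy in $t$ uniformly in $x$; standard interpolation with the bounds on $|\Na^k A|$ then promotes this to exponential convergence in every $C^k$ norm to a smooth immersion $F_\infty$ whose mean curvature vanishes, i.e., a minimal Lagrangian submanifold. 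The main obstacle in the argument is the self-consistency of the bootstrap loop in Lemma \ref{lem:main2}, in particular property $(d)$: one must show that the first eigenvalue does not drop below $\bar R/2n+\dd_0/2$, which uses the exponential decay of $|H|+|\Na H|$ fed into Lemma \ref{lem:eig} part (2); this is the only place where the geometry of the positive scalar curvature case differs genuinely from the negative case, and it is here that the exactness assumption and the eigenvalue gap $\dd_0$ are indispensable.
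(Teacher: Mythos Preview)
Your proposal is correct and follows essentially the same approach as the paper: define the maximal time $t_0$ on which $L_t\in\cB(\tfrac13\kappa_0,r_0,\tfrac13\dd_0,6\La_0,2\ee_0^{1/(n+2)})$, use Lemma \ref{lem:main2} to upgrade the bounds to the stronger class $\cB(\tfrac23\kappa_0,r_0,\tfrac{\dd_0}{2},3\La_0,\ee_0^{1/(n+2)})$ on $[0,t_0)$, and then extend past $t_0$ via Lemma \ref{lem:initial2} to derive a contradiction. Your additional commentary on why exactness and the eigenvalue gap matter, and your more explicit description of the final convergence step, are accurate elaborations of what the paper states more tersely.
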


\begin{proof}. Suppose that $L_0\in \cB(\kappa_0, r_0, \dd_0, \La_0, \ee_0)$
for any positive constants $\kappa_0, r_0, \dd_0, \La_0$ and small $\ee_0$ which will be chosen later. Define
$$t_0=\sup\Big\{t>0\;\Big|\; L_s\in \cB(\frac 13\kappa_0, r_0, \frac 13\dd_0,  6\La_0, 2\ee_0^{\frac 1{n+2}}),\;\;s\in [0, t)\Big\}.$$
Suppose that $t_0<+\infty.$ By Lemma \ref{lem:main2}, there exists $\ee_0=\ee_0(\kappa_0, r_0, \dd_0, \bar R, \La_0, n, K_5, V_0)>0$
such that $L_t\in\cB(\frac 23\kappa_0, r_0, \frac {\dd_0}2, 3\La_0, \ee_0^{\frac 1{n+2}})$ for all $t\in [0, t_0).$
Moreover, by Lemma \ref{lem:main2} again the solution $L_t$ can be extended to $[0, t_0+\dd]$ such that
$L_t\in \cB(\frac 13\kappa_0, r_0, \frac 13\dd_0, 6\La_0, 2\ee_0^{\frac 1{n+2}})$, which contradicts the definition of $t_0$.
Thus, $t_0=+\infty$ and $$L_t\in \cB(\frac 13\kappa_0, r_0, \frac 13\dd_0, 6\La_0, 2\ee_0^{\frac 1{n+2}} ),\quad t\in [0, \infty).$$
By Lemma \ref{lem:main2} the mean curvature vector will decay exponentially to zero and the flow will
converge to a smooth minimal Lagrangian submanifold. The theorem is proved.

\end{proof}

\section{Proof of Theorem \ref{main3}}
In this section, we will introduce some definitions related to the
deformation of a Lagrangian submanifold, and prove the exponential
decay of the mean curvature vector under the Lagrangian mean
curvature flow with some special initial data. The idea of the
argument is very similar to K\"ahler-Ricci flow in a
K\"ahler-Einstein manifold with nonzero holomorphic vector fields
(cf. \cite{[CL]}\cite{[CLW]}).

\subsection{Deformation of minimal Lagrangian submanifolds}

Let $(M, \bar g)$ be a K\"ahler-Einstein manifold. First we give some definitions(cf.  \cite{[Oh]}):

\begin{defi}(1). Let $L\subset M$ be a Lagrangian submanifold and $X$ be
a vector field along $L$. $X$ is called a Lagrangian(resp. hamiltonian) variation if
its associated one form $$\al_X=i_X\bar \oo$$ is closed(resp. exact), where $\bar \oo$ is the
K\"ahler form of the metric $\bar g$ on $M.$

\medskip

(2). A smooth family $\phi_s$ of immersions of $L$ into $M$ is called a Lagrangian (resp. hamiltonian)
deformation if its derivative
$$X=\pd {\phi_s(L)}s$$
 is Lagrangian (resp. hamiltonian) for each $s.$

\end{defi}

In the following, we assume that $\phi_0: L\ri M$ is a smooth minimal Lagrangian submanifold
into a K\"ahler-Einstein manifold $(M, \bar g)$, and $X=J\Na f_0$ is a hamiltonian variation of $L_0=\phi_0(L).$
We remind that the notation $L_0$ has different meaning in previous sections, and the readers should
not confuse it.

We can extend the vector $X$ to a neighborhood of $L_0$ in $M$ such that it is still hamiltonian. Let
$\phi_s: L\ri M(s\in (-\eta, \eta))$ be a family of hamiltonian deformations generated by $X$ and we write
$L_s=\phi_s(L_0).$  For the hamiltonian deformation $L_s$, we have the following result:

\medskip

\begin{lem}\label{lem:defo}Let $f_s$ be a smooth function such that
\beq \pd {L_s}{s}=J\Na f_s,\label{lem:defo:eq2}\eeq
then the Lagrangian angle $\theta_s$ of $L_s$ satisfies
\beq
\pd {\theta_s}s=-\Delta_s f_s-\frac {\bar R}{2n}f_s.\label{lem:defo:eq1}
\eeq

\end{lem}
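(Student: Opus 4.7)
My plan is to view this as the natural generalization of Lemma \ref{lem:angle}: mean curvature flow is itself the special hamiltonian deformation with $f_s=-\theta_s$ (since $\pd{}{t}F=H=-J\Na\theta$), and substituting $f_s=-\theta_s$ into the asserted \eqref{lem:defo:eq1} recovers $\pd{\theta}{t}=\Delta\theta+\frac{\bar R}{2n}\theta$ from Lemma \ref{lem:angle}. I would therefore repeat the proof of that lemma, with the MCF deformation vector replaced by the more general $J\Na f_s$.

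First, the deformation $\pd{}{s}\phi_s=J\Na f_s=(\Na^jf_s)\,e_{\un j}$ is purely normal. Since $L_0$ is minimal, $\al_{H_0}=0$ is (trivially) exact; and exactness of $\al_H$ is preserved under hamiltonian deformations in a K\"ahler--Einstein manifold (cf.\ \cite{[S4]}), so there exist smooth functions $\theta_s$ on $L$ with $\al_{H_s}=d\theta_s$, equivalently $H^i_s=\Na^i_s\theta_s$. Differentiating this identity in $s$, and using the induced-metric variation $\pd{}{s}g_{ij}=2h^k_{ij}\Na^kf_s$ (the analog, for our deformation, of $\pd{}{t}g_{ij}=-2H^kh^k_{ij}$ along MCF), yields a first expression for $\pd{}{s}H^i$ in terms of $\Na^i\bigl(\pd{}{s}\theta_s\bigr)$ and lower-order terms in $A$ and $H$.

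A second expression for $\pd{}{s}H^i$ is obtained by computing $\pd{}{s}h^k_{ij}$ under an arbitrary normal deformation $V^{\un l}e_{\un l}$ following the proof of Lemma \ref{lem:AH}, whose derivation in \cite{[CL2]} goes through with $H^\al$ replaced throughout by an arbitrary $V^\al$. Specializing to $V^{\un j}=\Na^jf_s$, using the analog $b^{\un i}_{\un j}=h^k_{ij}\Na^kf_s$ of \eqref{eq:LH3}, and tracing, one gets a second formula for $\pd{}{s}H^i$ involving $\Delta(\Na^if_s)$, a $\bar R_{\un i\un j}\Na^jf_s$ contraction, and terms quadratic in $A$ and $\Na f_s$. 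Equating the two expressions and commuting $[\Delta,\Na^i]$ on $f_s$ exactly as in \eqref{lem:angle:eq1} --- via the Ricci identity, the Gauss equation, and the K\"ahler--Einstein identity $\bar R_{\un i\un j}=\frac{\bar R}{2n}\bar g_{\un i\un j}$ --- all second-fundamental-form terms should cancel thanks to the full Lagrangian symmetry $h^k_{ij}=h^i_{kj}$, leaving $\Na^i\bigl(\pd{}{s}\theta_s+\Delta f_s+\frac{\bar R}{2n}f_s\bigr)=0$. Since $L$ is connected this determines $\pd{}{s}\theta_s$ up to an $x$-independent constant, which is absorbed into the $s$-dependent normalization of $\theta_s$, giving \eqref{lem:defo:eq1}.

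The hard part is the complete cancellation of the second-fundamental-form terms in the last step. This is the same miracle that made the proof of Lemma \ref{lem:angle} go through, but with $f_s$ a general function rather than $-\theta_s$ the bookkeeping is heavier: one must separately track the contributions from the metric variation $\pd{}{s}g_{ij}$, from the normal-frame rotation $b^{\un i}_{\un j}$, and from the commutator $[\Delta,\Na^i]$, and verify that the K\"ahler--Einstein condition together with the full symmetry of $h^k_{ij}$ produces exactly the right combination of contractions to cancel everything except the claimed Jacobi-type term $-\Delta f_s-\frac{\bar R}{2n}f_s$.
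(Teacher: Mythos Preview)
Your proposal is correct and follows essentially the same route as the paper: compute the variation of $H^i$ under the normal deformation $J\Na f_s$ by rerunning the derivation of Lemma~\ref{lem:LH} with $-H^k$ replaced by $\Na^kf_s$ (including the metric variation $\pd{}{s}g_{ij}=2h^k_{ij}\Na^kf_s$ and the frame-rotation term $b^{\un i}_{\un j}$), then substitute $H^i=\Na^i\theta_s$ and use the commutator identity \eqref{lem:angle:eq1} from the proof of Lemma~\ref{lem:angle} together with the K\"ahler--Einstein condition to collapse everything to $\Na^i\bigl(\pd{}{s}\theta_s+\Delta_sf_s+\frac{\bar R}{2n}f_s\bigr)=0$. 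The paper simply invokes ``the same calculation as in Lemma~\ref{lem:LH}'' rather than writing out the two-expressions bookkeeping, but the content is identical.
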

\begin{proof}Let $\{e_1,\cdots, e_n\}$ be a normal coordinate frame on $L_s$ with $e_i=\p_i \phi_s.$ Since
$L_s$ is Lagrangian for each $s$, the vectors $Je_1, \cdots, Je_n $ are orthogonal to $L.$ The induced metric
on $L_s$ is $g_{ij}=\bar g(e_i, e_j).$ By (\ref{lem:defo:eq2}) we have
\beq
\pd {g_{ij}}s=2\Na^k f_s h^k_{ij}. \label{lem:defo:eq4}
\eeq
By the same calculation as in Lemma \ref{lem:LH}, the second fundamental form satisfies
\beq
\pd {}th_{ij}^{k}
=-\Na_i\Na_j\Na^kf_s+\Na^m f_s h_{jl}^{m}h_{il}^{k}+\Na^lf_sh^l_{mk}
h_{ij}^{m}-\Na^mf_s\bar R_{\un k j\un m
i}\n
\eeq
and the mean curvature vector
\beq
 \pd {}tH^{i}=-\Delta\Na^if_s-\Na^j f_sh_{lm}^{j}h_{ml}^{i}-\Na^jf_s\bar R_{\un i l \un j l}+\Na^jf_sH^kh^i_{kj}.\label{lem:defo:eq3}
\eeq
Since $L_s$ is a hamiltonian deformation, we can write $H^i=\Na^i\theta_s$. Thus, by the same calculation in the proof
of Lemma \ref{lem:LH} and (\ref{lem:defo:eq3}) we have
\beq
\pd {\theta_s}s=-\Delta_s f_s-\frac {\bar R}{2n}f_s.\n
\eeq
The lemma is proved.

\end{proof}

\medskip

Recall that a minimal Lagrangian submanifold is called hamiltonian stable (resp. strictly stable), if for any hamiltonian
variation $X$ the second variation along $X$ of the volume functional is nonnegative (resp. positive). When $M$ is a
K\"ahler-Einstein manifold with scalar curvature $\bar R,$ Oh \cite{[Oh]} proved that a compact minimal
Lagrangian submanifold $L$ is hamiltonian stable if and only if the first eigenvalue of the Laplacian operator
on $L$ has $\la_1\geq \frac {\bar R}{2n}.$

\medskip
Now we introduce the definition:
\begin{defi} \label{defi:essential}
 A nonzero vector field $X$ is called an essential hamiltonian variation of a Lagrangian submanifold $L_0$ , if
$X$ can be written as $X=J\Na f$ where $f\notin E_{\la_1}$, where $E_{\la_1}$ is the first eigenspace of the Laplacian operator $\Delta$ on $L$.
\end{defi}

For an essential hamiltonian vector $X$ on a minimal Lagrangian submanifold $L_0$, we can show that $L_0$
is strictly hamiltonian stable along the variation $X$ in the following sense:

\begin{lem}\label{lem:positive}Let $\phi_s: L\ri M$ be a hamiltonian deformation of a minimal Lagrangian submanifold
$L_0=\phi_0(L)$ with $\la_1=\frac {\bar R}{2n}$ and $X=\pd {\varphi}{s}|_{s=0}$.
Then $X$ is  an essential hamiltonian variation on $L_0$ if and only if
\beq
\frac {d^2}{ds^2}\vol(L_s)\Big|_{s=0}>0.\n
\eeq

\end{lem}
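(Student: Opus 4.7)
The plan is to derive an explicit closed-form expression for $\frac{d^2}{ds^2}\vol(L_s)|_{s=0}$ in terms of $f_0$, and then read off its sign via the spectral decomposition of $\Delta$ on $L_0$. The key inputs are Lemma \ref{lem:angle}, which provides the angle $\theta_s$ along the flow of hamiltonian deformations, and Lemma \ref{lem:defo}, which describes the $s$-evolution of $\theta_s$.

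First I would compute the first variation. Because the deformation is hamiltonian, Lemma \ref{lem:angle} gives $H^i = \Na^i\theta_s$; combined with $H=-H^iJe_i$ and integration by parts, this yields
$$\frac{d}{ds}\vol(L_s) = -\int_{L_s}\bar g(H,X)\,d\mu_s = \int_{L_s}\Na_i f_s\,\Na^i\theta_s\,d\mu_s = -\int_{L_s} f_s\,\Delta_s\theta_s\,d\mu_s.$$
At $s=0$ this vanishes, since minimality of $L_0$ forces $\al_H = d\theta_0 = 0$ and hence $\theta_0$ is constant, giving $\Delta_0\theta_0=0$.

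For the second variation, I would differentiate the above identity in $s$ and evaluate at $s=0$. Every term containing the factor $\Delta_0\theta_0$ drops out immediately; moreover the $s$-variation of $\Delta_s$ applied to the constant function $\theta_0$ also vanishes, since $\Delta_s\theta_0\equiv 0$. The survivor is
$$\frac{d^2}{ds^2}\vol(L_s)\bigg|_{s=0} = -\int_{L_0} f_0\,\Delta_0\!\left(\frac{\partial \theta_s}{\partial s}\bigg|_{s=0}\right) d\mu_0.$$
Lemma \ref{lem:defo} gives $\partial_s\theta_s|_{s=0} = -\Delta_0 f_0 - \frac{\bar R}{2n}f_0$, so substituting and integrating by parts twice produces the Oh-type formula
$$\frac{d^2}{ds^2}\vol(L_s)\bigg|_{s=0} = \int_{L_0}\!\left[(\Delta f_0)^2 - \frac{\bar R}{2n}|\Na f_0|^2\right]d\mu_0.$$

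Finally I would expand $f_0 = c_0 + \sum_{k\geq 1}c_k\phi_k$ in an $L^2$-orthonormal eigenbasis with $-\Delta\phi_k = \mu_k\phi_k$ and $0<\mu_1=\la_1=\bar R/(2n)\leq\mu_2\leq\cdots$. The constant $c_0$ contributes nothing, and Parseval gives
$$\frac{d^2}{ds^2}\vol(L_s)\bigg|_{s=0} = \sum_{k\geq 1} c_k^2\,\mu_k(\mu_k-\la_1).$$
Each summand is nonnegative and vanishes iff $c_k=0$ or $\mu_k=\la_1$, so the total is strictly positive iff some $c_k\neq 0$ with $\mu_k>\la_1$. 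Since $X=J\Na f_0$ is insensitive to the additive constant $c_0$, this is equivalent to $f_0$ having a component outside $E_{\la_1}$ after quotienting by constants, i.e.\ to $X$ being an essential hamiltonian variation per Definition \ref{defi:essential}. This proves both directions simultaneously.

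The main obstacle is really bookkeeping rather than anything conceptual: one must be careful with the sign convention $H=-H^iJe_i$ in the first variation, use minimality of $L_0$ (hence constancy of $\theta_0$) in two places to kill both $\Delta_0\theta_0$ and $(\partial_s\Delta_s)\theta_0|_{s=0}$ in the second variation, and interpret ``$f_0\in E_{\la_1}$'' modulo constants in accord with the definition of essential variation. Once these are handled, the spectral identity delivers the equivalence immediately.
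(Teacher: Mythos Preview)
Your argument is correct and follows essentially the same route as the paper: compute the first variation as $\int_{L_s}\Na f_s\cdot\Na\theta_s$, differentiate once more using minimality of $L_0$ (so $\Na\theta_0=0$) to isolate the single surviving term, plug in Lemma~\ref{lem:defo} for $\partial_s\theta_s|_{s=0}$, and conclude via the eigenexpansion of $f_0$. The only cosmetic differences are that the paper places the Laplacian on $f_s$ rather than $\theta_s$ in the first-variation formula and writes the second variation as $\int_{L_0}(\Delta_0 f_0+\tfrac{\bar R}{2n}f_0)\Delta_0 f_0$ instead of your equivalent Oh-type form; your handling of the constant mode $c_0$ is also a bit more explicit. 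One small quibble: your citation of Lemma~\ref{lem:angle} for the existence of $\theta_s$ is slightly off, since that lemma concerns the $t$-flow; the exactness of $\al_H$ along the hamiltonian $s$-deformation is what is used (and is invoked in the paper within Lemma~\ref{lem:defo}).
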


\begin{proof}Let $X_s=\pd {\phi_s}{s}$. Since $L_s$ is a hamiltonian deformation, we can find smooth functions
$f_s$ such that $X_s=\Na f_s.$ Now we calculate
\beqn
\frac d{ds}\vol(L_s)&=&\int_{L_s}\;\frac 12 g^{ij}\pd {g_{ij}}s\,d\mu_s\n\\
&=&\int_{L_s}\;-\theta_s\Delta_s f_s \,d\mu_s,\n
\eeqn where we used (\ref{lem:defo:eq4}). Thus, the second variation of the volume is
\beq
\frac {d^2}{ds^2}\vol(L_s)\Big|_{s=0}=\int_{L_0}\;\Big(\Delta_0 f_0+\frac {\bar R}{2n}f_0\Big)\Delta_0 f_0\, d\mu_0\label{lem:defo:eq5}
\eeq
where we used the equality (\ref{lem:defo:eq1}) and the fact that $L_0$ is minimal.
By the eigenvalue decomposition, we can assume that
$$f_0=\sum_{i=1}^{\infty}\;a_i\eta_i,$$
where the functions $\eta_i$ satisfies
$$-\Delta_0 \eta_i=\la_i \eta_i,\quad \int_{L_0}\;\eta_i^2\,d\mu_0=1$$ for the eigenvalues $\la_1<\la_2<\cdots.$
Thus, (\ref{lem:defo:eq5}) can be written as
\beq
\frac {d^2}{ds^2}\vol(L_s)\Big|_{s=0}=\sum_{i=1}^{\infty}\;a_i^2 \la_i(\la_i-\frac {\bar R}{2n})\geq 0,
\eeq
where the equality holds if and only if $a_i=0$ for all $i\geq 2,$ which says $f_0\in E_{\la_1}.$ The lemma
is proved.
\end{proof}

\subsection{Exponential decay of the mean curvature vector}

\medskip
To proceed further, we need the following compactness result for mean curvature flow.

\begin{prop}(cf. \cite{[CH]})\label{compactness}
Let $\phi_k(t): L\subset M$ be a sequence of mean curvature flow from a compact submanifold $L$ to
a compact Riemannian manifold $M$ with uniformly bounded second fundamental forms
$$|A_k|(t)\leq C,\quad \forall \,t\in [0, T].$$
Then there exists a sequence of $\phi_k(t)$ which converges to a mean curvature flow $\phi_{\infty}(t)(t\in (0, T)))$
and $L_{\infty}=\phi_{\infty}(t)(L)$ is a smooth Riemannian manifold.
\end{prop}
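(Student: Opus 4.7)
The plan is to run Hamilton's compactness argument for geometric flows in the setting of mean curvature flow, taking advantage of the fact that both the domain $L$ and the ambient manifold $M$ are fixed and compact, so only the maps $\phi_k$ need to converge. The main input is the uniform $C^0$ bound on $A_k$, which I would first upgrade via Lemma \ref{lem:higer} to uniform higher-order estimates of the form $\max_{L\times[t_0,T]}|\Na^j A_k|^2\le C_j(n,C,K_{j+1},t_0)$ for every $j\ge 0$ and every $t_0\in(0,T)$, with the $C_j$ independent of the index $k$.

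Next I would control the maps $\phi_k$ themselves. The evolution equation $\pd{g_{ij}}t=-2H^\al h^\al_{ij}$ combined with $|A_k|\le C$ shows that the induced metrics $g_k(t)$ are mutually uniformly equivalent on $L\times[0,T]$, bounded above and below independently of $k$ and $t$. Fixing finite atlases on $L$ and on $M$, the mean curvature flow equation $\pd\phi t=H$ is a quasilinear parabolic system whose spatial derivatives at each order are expressed in terms of the induced metric, the second fundamental form and its covariant derivatives, and the geometry of $M$ controlled by $K_5$. Hence for every $j$ and every $t_0>0$, the $C^j$ norms of $\phi_k$ are bounded uniformly in $k$ on $L\times[t_0,T]$.

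An Arzel\`a--Ascoli argument then extracts a subsequence, still denoted $\phi_k$, converging in $C^\infty_{loc}$ on $L\times(0,T)$ to some map $\phi_\infty$; a diagonal procedure over a sequence $t_0\downarrow 0$ yields a single subsequence converging on every compact subset of $L\times(0,T)$. Because the mean curvature operator is smooth in $\phi$ and $C^\infty_{loc}$ convergence respects derivatives, $\phi_\infty$ satisfies the mean curvature flow equation on $L\times(0,T)$. The uniform two-sided bounds on $g_k(t)$ prevent the limit induced metric from degenerating, so $\phi_\infty(\cdot,t)$ remains an immersion for every $t\in(0,T)$ and $L_\infty=\phi_\infty(L,t)$ is a smooth Riemannian manifold.

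The main technical hurdle I anticipate is making the extraction step rigorous: one must verify that the quasilinear parabolic nature of the mean curvature flow equation actually yields the claimed uniform $C^j$ bounds on $\phi_k$ in charts, and that no bubbling or degeneration of the immersion occurs in the limit. In contrast to general Cheeger--Gromov compactness, both of these are manageable here because $L$ and $M$ are fixed compact manifolds throughout the sequence, and the two-sided metric bounds from the previous step directly rule out degeneration. The restriction to $(0,T)$ rather than $[0,T]$ simply reflects the fact that the estimates from Lemma \ref{lem:higer} blow up as $t\to 0$, just as Shi's estimates do in Ricci flow; if one had uniform bounds on $|\Na^j A_k|$ at $t=0$ as well, the convergence would extend to the closed interval.
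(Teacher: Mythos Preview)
Your proposal is correct, but it takes a different route from the paper. The paper's own proof is a two-line reduction: it cites Chen--He's compactness theorem for mean curvature flows in Euclidean space, then embeds the compact ambient manifold $M$ isometrically into some $\RR^N$ via Nash, observes that the second fundamental forms of $\phi_k(L)$ viewed in $\RR^N$ remain uniformly bounded (since $M$ is compact), and invokes the Euclidean result. Your approach instead reproduces the Hamilton-style compactness argument intrinsically in $M$: upgrade $|A_k|\le C$ to uniform $|\Na^j A_k|$ bounds on $[t_0,T]$ via Lemma~\ref{lem:higer}, translate these into uniform $C^j$ bounds on the maps $\phi_k$ in local charts, and extract a $C^\infty_{\mathrm{loc}}$ limit by Arzel\`a--Ascoli and diagonalization. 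The paper's approach is shorter but outsources the analysis and carries the slight awkwardness that mean curvature flow in $M$ is not literally mean curvature flow in $\RR^N$ after embedding, so one is really borrowing Chen--He's estimates rather than their theorem verbatim; your approach is longer but self-contained, stays within the geometry already set up in the paper, and makes transparent exactly why the limit exists only on the open interval $(0,T)$.
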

\begin{proof}The proposition is proved by Chen-He in \cite{[CH]} for the case when the ambient manifold is the
Euclidean space. For a general compact ambient manifold $M$, we can embed $M$ isometrically into $\RR^N$
for some large $N$ and the corresponding mean curvature of the submanifold $\phi_k(L)(t)$ in $\RR^N$ is still
uniformly bounded. Thus, we can apply Chen-He's theorem and the proposition is proved.

\end{proof}

We denote by $L_{s, t}=\phi_{s, t}(L_0)(t\in [0, T])$ the Lagrangian mean curvature flow
 with the initial data $L_s$. Since $L_s$ is a hamiltonian deformation of $L_0$, the mean curvature
 form of $L_s$ is exact for each $s$. Thus, the mean curvature form of $L_{s, t}$ is also exact, and we
 denote the Lagrangian angle by $\theta_{s, t}$. Suppose that the deformation $L_s$ is sufficiently close
 to $L_0$ in the following sense
 \beq
\|\phi_s-\phi_0\|_{C^3}\leq \ee_0 \label{main3:eq1}
 \eeq
for small $\ee_0$ which will be determined later.  The next lemma shows that  $\theta_{s, t}$ satisfies certain inequality if $L_s$ is sufficiently close
to $L_0:$

\begin{lem}\label{lem:main31}Let $X=J\Na f_0$ be an essential hamiltonian variation of $L_0$,
where $L_0$ is a minimal Lagrangian submanifold with the first
eigenvalue $\la_1=\frac {\bar R}{2n}$.  For any $\La>0$, there
exists $\ee_0=\ee_0(L_0, X, M)>0$ and $\dd_0>0$ such that if $L_{s,
t}$ satisfies \beq |A_s|(t)\leq\La, \quad |H_s|(t)\leq \ee_0,\quad
\forall\, t\in [0, T] \eeq then the Lagrangian angle $\theta_{s, t}$
of $L_{s, t}$ satisfies \beq \int_{L_{s, t}}\;|\Delta \theta_{s,
t}|^2\geq(\frac {\bar R}{2n}+\dd_0)\int_{L_{s, t}}|\Na \theta_{s,
t}|^2, \quad t\in [0, T].\label{lem:main30:eq1} \eeq Thus, we have \beq \pd {}t\int_{L_{s, t}}\;|H_{s, t}|^2\leq
-2(\dd_0-\La\ee_0)\int_{L_{s,t}}\;|H_{s, t}|^2,\quad t\in [0,
T].\label{lem:main31:eq7} \eeq
\end{lem}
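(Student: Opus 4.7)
My plan is to derive the exponential decay (\ref{lem:main31:eq7}) from the spectral estimate (\ref{lem:main30:eq1}), and to establish (\ref{lem:main30:eq1}) by a contradiction/compactness argument exploiting the essentiality of $X$. For the first step I would repeat the computation of (\ref{lem:H:eq3}) in Lemma \ref{Lem:H} starting from (\ref{lem:H:eq2}): the only change is that the term $-2\int|\nabla_iH^i|^2 = -2\int|\Delta\theta|^2$ is now bounded above by $-2(\frac{\bar R}{2n}+\dd_0)\int|H|^2$; combined with the K\"ahler-Einstein identity $2\bar R_{ij}H^iH^j = \frac{\bar R}{n}|H|^2$ and the pointwise bound $|2H^iH^jH^kh^i_{jk}|\leq 2\La\ee_0|H|^2$ under the hypotheses $|A|\leq\La$ and $|H|\leq\ee_0$, this yields (\ref{lem:main31:eq7}).

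The main content is (\ref{lem:main30:eq1}), and the underlying heuristic is the following. Let $\cL := \Delta + \frac{\bar R}{2n}$. By Lemma \ref{lem:defo} we have $\p_s\theta_s = -\cL f_s$; since $L_0$ is minimal, $\theta_0\equiv 0$, so $\theta_{s,0} = -s\,\cL f_0 + O(s^2)$. On $L_0$ the operator $\cL$ is self-adjoint with kernel exactly $E_{\la_1}$ (because $\la_1 = \frac{\bar R}{2n}$), so its image lies in $E_{\la_1}^\perp$; the essentiality assumption $f_0\notin E_{\la_1}$ then guarantees $\cL f_0\not\equiv 0$ and $\cL f_0 \perp E_{\la_1}$ in $L^2(L_0)$. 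Along the Lagrangian mean curvature flow, Lemma \ref{lem:angle} gives $\p_t\theta = \cL\theta$, so the same operator governs both the $s$- and the $t$-evolution, and the $L^2$-projection of $\theta$ onto $E_{\la_1}$ is, to leading order, conserved in time. Standard eigenvalue decomposition then gives $\int|\Delta\theta|^2\geq\la_2\int|\nabla\theta|^2$ for $\theta\perp E_{\la_1}$, with $\la_2>\la_1=\frac{\bar R}{2n}$, which is (\ref{lem:main30:eq1}) with $\dd_0 := (\la_2-\la_1)/2$.

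To make this rigorous I would argue by contradiction. Suppose there exist sequences $s_k\to 0$, $\ee_k\to 0$ and times $t_k$ producing Lagrangian MCFs $L_t^{(k)}$ from $L_{s_k}$ satisfying the hypotheses with $\ee_0 = \ee_k$, but violating (\ref{lem:main30:eq1}) at $t = t_k$ with $\dd_0 = 1/k$. The uniform bound $|A|\leq\La$ together with Lemma \ref{lem:higer} gives $C^\infty$ control of $L_t^{(k)}$; the bound $|H^{(k)}|\leq\ee_k$ controls the MCF displacement; and $L_{s_k}\to L_0$ (inherited from the smallness assumption in the enclosing Theorem \ref{main3}) lets Proposition \ref{compactness} extract a smooth limit $L_\infty = L_0$. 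Normalizing $\bar\theta_k := \theta_{t_k}^{(k)}/\|\nabla\theta_{t_k}^{(k)}\|_{L^2}$, a subsequence converges to some $\bar\theta_\infty$ on $L_0$ with $\|\nabla\bar\theta_\infty\|_{L^2}=1$ and $\int|\Delta\bar\theta_\infty|^2\leq\la_1$, forcing $\bar\theta_\infty\in E_{\la_1}$. For the contradiction I would test against any fixed $\eta\in E_{\la_1}$ (extended smoothly to a neighborhood of $L_0$ in $M$) and differentiate $\int_{L_t^{(k)}}\theta^{(k)}\eta\,d\mu$ in $t$ using $\p_t\theta = \cL\theta$, $\p_t d\mu = -|H|^2d\mu$, and $\p_t\eta = \bar\Na\eta\cdot H$; the integration by parts produced by the $\cL\theta$ term gives cancellation up to $O(\ee_k)$-terms. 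Combined with the initial value $\int_{L_{s_k}}\theta_{s_k,0}\eta = O(s_k^2)$ from $\cL f_0 \perp E_{\la_1}$, this forces $\int_{L_0}\bar\theta_\infty\eta = 0$, contradicting $\bar\theta_\infty\in E_{\la_1}$ with $\|\nabla\bar\theta_\infty\|_{L^2}=1$.

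The main obstacle I foresee is the long-time control of the drift of $L_t^{(k)}$ from $L_0$ when $t_k$ is large: the total displacement $\int_0^{t_k}|H^{(k)}|\,dt$ must remain bounded, which requires an exponential decay of $|H|$ that is itself only established \emph{a posteriori} via (\ref{lem:main31:eq7}). This circularity will be resolved by embedding the present estimate in the iterative continuation scheme of Lemmas \ref{lem:main} and \ref{lem:main2}: one maintains a slightly weakened spectral gap on an open time interval, uses it to deduce exponential decay of $|H|$ and thereby improved closeness of $L_t^{(k)}$ to $L_0$, and bootstraps to close the argument.
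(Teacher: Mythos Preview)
Your overall strategy --- contradiction, compactness, and orthogonality of (a rescaling of) $\theta$ to $E_{\la_1}$ --- matches the paper's, and your derivation of (\ref{lem:main31:eq7}) from (\ref{lem:main30:eq1}) via (\ref{lem:H:eq2}) is exactly what the paper does. The substantive difference is the normalization used to extract the limiting object in the contradiction argument, and this difference is precisely what dissolves the circularity you flag at the end.

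The paper does \emph{not} normalize by $\|\Na\theta\|_{L^2}$; instead it divides the violated inequality by $s_i^2$ and sends $s_i\to 0$, so that the limiting object is the first variation $\psi(t):=\p_s\theta_{s,t}\big|_{s=0}$. Since $L_{0,t}\equiv L_0$ (the flow starting from a minimal submanifold is constant), $\psi$ lives on the \emph{fixed} manifold $L_0$ for all $t$, and differentiating (\ref{eq:angle}) in $s$ at $s=0$ shows that $\psi$ solves the linear equation $\p_t\psi = \Delta_0\psi + \frac{\bar R}{2n}\psi$. By Lemma \ref{lem:defo} one has $\psi(0) = -\cL f_0 \in E_{\la_1}^\perp\setminus\{0\}$ (this is where essentiality enters), and testing against any $\eta\in E_{\la_1}$ shows that this linear evolution preserves $E_{\la_1}^\perp$ \emph{exactly} for all time. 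Hence $\int_{L_0}|\Delta_0\psi|^2\geq\la_2\int_{L_0}|\Na\psi|^2$ for every $t$, contradicting the limiting inequality $\int_{L_0}|\Delta_0\psi|^2\leq\frac{\bar R}{2n}\int_{L_0}|\Na\psi|^2$.

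Your normalization, by contrast, forces you to track the approximate orthogonality of $\theta^{(k)}$ to an ambient extension of $\eta\in E_{\la_1}$ along the nonlinear flow on the moving $L^{(k)}_t$; after integrating by parts the error term is governed by $\Delta_{L_t}\eta+\frac{\bar R}{2n}\eta$, which vanishes only on $L_0$ and is therefore controlled only by the drift of $L_t^{(k)}$ from $L_0$ --- exactly the quantity you cannot bound without already knowing the exponential decay of $|H|$. The paper's linearization sidesteps this entirely: no extension of $\eta$, no drift estimate, and the resulting $\ee_0,\dd_0$ are manifestly independent of $T$, so that the lemma is self-contained rather than having to be embedded into the continuation scheme of Lemma \ref{lem:main3}.
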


\begin{proof} Suppose that (\ref{lem:main30:eq1}) doesn't hold, there exist some constants
 $s_i\ri0, \dd_i\ri 0$ and $t_i\in [0, T]$ such that
\beq |A_{s_i}|(t)\leq \La, \quad |H_{s_i}|(t)\ri0, \quad t\in [0, T]
\label{lem:main31:eq2} \eeq and \beq \int_{L_{s_i, t_i}}\;|\Delta
\theta_{s_i, t_i}|^2\leq(\frac {\bar R}{2n}+\dd_i)\int_{L_{s_i,
t_i}}|\Na \theta_{s_i, t_i}|^2.\label{lem:main31:eq1} \eeq
 By (\ref{lem:main31:eq2}) and Proposition \ref{compactness} a sequence
of the Lagrangian mean curvature flow $L_{s_i, t}(t\in (0, T))$ will
converge to a limit Lagrangian mean curvature flow $L_{\infty, t}$
smoothly for $t\in (0, T)$. Since the initial submanifolds $L_{s_i}$
satisfies (\ref{main3:eq1}),  the limit flow has $L_{\infty}(t)\ri
L_{0}$ in $C^{2, \al}$ as $t$ goes to zero. By
(\ref{lem:main31:eq2})again the mean curvature of
$L_{\infty}(t)(t\in (0, T))$ are identically zero and  by the
uniqueness of mean curvature flow we have
$$L_{s_i, t}\ri L_{\infty, t}=L_0,\quad t\in [0, T].$$

Note that by (\ref{lem:main31:eq1}) we have \beq \int_{L_{s_i,
t_i}}\;|\Delta \frac 1{s_i}\theta_{s_i, t_i}|^2\leq(\frac {\bar
R}{2n}+\dd_i)\int_{L_{s_i, t_i}}|\Na \frac {1}{s_i}\theta_{s_i,
t_i}|^2.\n \eeq Since $L_0$ is minimal, we can take $s_i\ri0$ to get
\beq \int_{L_{0}}\;\Big|\Delta \pd {\theta_{s, t}}s\Big|_{(0,
t_i)}\Big|^2\leq\frac {\bar R}{2n}\int_{L_{0}}\;\Big|\Na \pd
{\theta_{s, t}}s\Big|_{(0, t_i)}\Big|^2.\label{lem:main31:eq3} \eeq

\medskip

On the other hand, by Lemma \ref{lem:angle} the Lagrangian angle  $\theta_{s, t}$ satisfies
\beq
\pd {\theta_{s, t}}t=\Delta_{s, t} \theta_{s, t}+\frac {\bar R}{2n}\theta_{s, t}.\n
\eeq
Since $L_0$ is minimal, we can take $s=s_i\ri 0$ to derive
\beq
\pd {}t\pd {\theta_{s, t}}s\Big|_{s=0}=\Delta_0\pd {\theta_{s, t}}s\Big|_{s=0}+\frac {\bar R}{2n}\pd {\theta_{s, t}}s\Big|_{s=0}. \label{lem:main31:eq4}
\eeq
By the eigenvalue decomposition as in
Lemma \ref{lem:positive}, we have
$$\pd {\theta_{s, t}}s\Big|_{(s, t)=(0, 0)}=-\Delta_0f_0-\frac {\bar R}{2n}f_0\perp E_{\la_1}.$$
Now we claim that \beq \pd {\theta_{s, t}}s\Big|_{s=0}\perp
E_{\la_1},\quad t\in [0, T]. \label{lem:main31:eq5} \eeq In fact,
for any function $\eta\in E_{\la_1}$ by (\ref{lem:main31:eq4}) we
have \beqs
\pd {}t\int_{L_0}\;\eta\pd {\theta_{s, t}}s\Big|_{s=0}&=&\int_{L_0}\;\eta \Big(\Delta_0\pd {\theta_{s, t}}s\Big|_{s=0}+\frac {\bar R}{2n}\pd {\theta_{s, t}}s\Big|_{s=0}\Big)\\
&=&\int_{L_0}\;\Big(\Delta_0\eta+\frac {\bar R}{2n} \eta\Big)\pd {\theta_{s, t}}s\Big|_{s=0}\\
&=&0,
\eeqs
which proves (\ref{lem:main31:eq5}).

Note that
\beq
\pd {\theta_{s, t}}s\Big|_{(s, t)=(0, 0)}=-\Delta_0f_0-\frac {\bar R}{2n}f_0\neq0,\n
\eeq
since $X$ is an essential hamiltonian variation.
Thus, by (\ref{lem:main31:eq4}) we can see that $\pd {\theta_{s, t}}s\Big|_{s=0}$ is nonzero for all $t\in [0, T]$, and (\ref{lem:main31:eq5}) implies
that
\beq
\int_{L_{0}}\;\Big|\Delta \pd {\theta_{s, t}}s\Big|_{s=0}\Big|^2\geq \la_2\int_{L_{0}}\;\Big|\Na \pd {\theta_{s, t}}s\Big|_{s=0}\Big|^2,\label{lem:main31:eq6}
\eeq
where the second eigenvalue $\la_2>\la_1=\frac {\bar R}{2n}.$ Note that (\ref{lem:main31:eq6})
 contradicts (\ref{lem:main31:eq3}), and (\ref{lem:main30:eq1})  is proved.
\medskip

Recall that by (\ref{lem:H:eq2}) in Lemma \ref{Lem:H} we have
\beqn \pd
{}t\int_{L_{s, t}}\;|H|^2&\leq&\int_{L_{s, t}}\;-2|\Na_i H^i|^2+\frac {\bar R}{n}|H|^2+2\La\ee_0|H|^2\n\\
&=&\int_{L_{s, t}}\;-2|\Delta \theta_{s, t}|^2+\Big(\frac {\bar R}{n}+2\La\ee_0\Big)|\Na\theta_{s, t}|^2\n\\
&\leq &-2(\dd_0-\La\ee_0)\int_{L_{s, t}}\;|H|^2.\label{lem:main31:eq8} \eeqn
Thus, (\ref{lem:main31:eq7}) is proved.
\end{proof}

\subsection{Proof of Theorem \ref{main3}}

In this section, we will prove Theorem \ref{main3} by using the same argument as in the proof
of Theorem \ref{main} and \ref{main2}. Since $L_0$ is a
smooth minimal Lagrangian submanifold, we can find $\kappa_0, r_0>0$
such that $L_0$ is $2\kappa_0$-noncollapsed on the scale $r_0.$ Thus, by the assumption of Theorem
\ref{main3} we can choose
$\ee_0$ small enough such that $L_s$ is $\kappa_0$-noncollapsed on the scale $r_0$ and
$L_s\in \cA(\kappa_0, r_0, \La_0, \ee_0)$ for some constant $\La_0>0$, where
$\cA(\kappa, r, \La, \ee)$ is the
following subspace of Lagrangian submanifolds in $M$ defined by
$$\cA(\kappa, r, \La, \ee)=\Big\{L \;\Big|\;L \hbox{ is $\kappa$-noncollapsed on the scale $r$ with}\;
|A|(t)\leq \La, \;|H|(t)\leq \ee\Big\}.$$

\medskip

Consider the solution
$L_{s, t}$ of the Lagrangian mean curvature flow with the initial
data $L_s$, we have
\medskip

\begin{lem}\label{lem:initial:main3}If the initial Lagrangian submanifold $L_s\in \cA(\kappa, r, \La, \ee)$,
then there exists $\tau=\tau(n, \La, K_1)$ such that $L_{s, t}\in
\cA(\frac 12\kappa, r, 2\La, 2\ee)$ for $t\in [0, \tau].$
\end{lem}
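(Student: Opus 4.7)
The plan is to mirror, almost verbatim, the one-line argument given for Lemma \ref{lem:initial}, which is the exact analogue in the negative-scalar-curvature setting. The target assertion is a short-time preservation of membership in $\cA$ with the pointwise constants relaxed by a factor of $2$ and the noncollapsing constant relaxed by a factor of $1/2$. I would split the work into the pointwise part (bounds on $|A|$ and $|H|$) and the volumetric part (noncollapsing), invoking one earlier lemma for each.

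For the pointwise bounds $|A|(t)\le 2\La$ and $|H|(t)\le 2\ee$, I would directly apply Lemma \ref{lem:zero} to the flow $L_{s,t}$. By hypothesis $L_s\in\cA(\kappa,r,\La,\ee)$, so in particular $|A|(0)\le\La$ and $|H|(0)\le\ee$. The maximum-principle computation of Lemma \ref{lem:zero}, which used only the evolution equations (\ref{lem:AH:eq5}) and (\ref{eq:LH}) and the curvature bound $K_1$, yields a time $T_1=T_1(n,\La,K_1)$ on which the doubled bounds hold along $L_{s,t}$. No dependence on $s$ enters because the argument is purely maximum-principle and the ambient geometry is unchanged.

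For the $\tfrac12\kappa$-noncollapsing at scale $r$, I would invoke Lemma \ref{lem:vol}: for any geodesic ball $B_t(p,\rho)\subset L_{s,t}$ with $\rho\in(0,r)$ one has $\vol(B_t(p,\rho))\ge \kappa e^{-(n+1)E(t)}\rho^n$, where $E(t)=\int_0^t\max_{L_{s,u}}(|A||H|+|H|^2)\,du$. Feeding in the pointwise bounds from the previous step gives $E(t)\le(4\La\ee+4\ee^2)t$ for $t\in[0,T_1]$, so requiring $(n+1)E(\tau)\le\log 2$, which is secured by shrinking $\tau$ below $\log 2/[(n+1)(4\La\ee+4\ee^2)]$, produces $e^{-(n+1)E(\tau)}\ge\tfrac12$ and hence the desired noncollapsing.

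Setting $\tau$ to be the minimum of $T_1$ and this threshold finishes the proof. There is no genuine obstacle; the sole thing to check is that Lemma \ref{lem:vol} and Lemma \ref{lem:zero} apply verbatim when the initial submanifold is the perturbed $L_s$ rather than the reference minimal Lagrangian $L_0$, which is immediate since both lemmas depend only on the hypotheses encoded in $\cA(\kappa,r,\La,\ee)$ and on the ambient data $n,\La,K_1$. If one wishes the advertised dependence $\tau=\tau(n,\La,K_1)$ to be literally free of $\ee$, one may simply restrict attention a priori to $\ee\le 1$, which is harmless in every application of this lemma later in the paper.
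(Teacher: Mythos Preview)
Your argument is correct and mirrors the paper's own proof, which simply cites Lemma \ref{lem:vol} and Lemma \ref{lem:zero} in one line. Your added remark about eliminating the $\ee$-dependence of $\tau$ by restricting to $\ee\le 1$ is a reasonable clarification that the paper leaves implicit.
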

\begin{proof}This result follows directly from Lemma \ref{lem:vol} and Lemma \ref{lem:zero}.
\end{proof}

\begin{lem}\label{lem:main3}For any $\kappa_0, r_0, \La_0, V_0, T>0$ there exists $\ee_0=\ee_0(\kappa_0, r_0, \La_0, n, K_5, V_0)$
such that if the solution $L_{s, t}(t\in [0, T])$ of the Lagrangian mean
curvature flow satisfies
\begin{enumerate}
  \item  $L_s\in \cA(\kappa_0, r_0, \La_0, \ee_0)$ and $\vol(L_s)\leq V_0$,

  \item $L_{s, t}\in \cA(\frac 13\kappa_0, r_0, 6\La_0, 2\ee_0^{\frac 1{n+2}})(t\in [0, T])$,
\end{enumerate}
Then we have the following properties
\begin{enumerate}
  \item[(a)] The mean curvature vector satisfies
  $$\max_{L_{s, t}}|H_{s, t}|\leq \ee_0^{\frac 1{n+2}}e^{-\frac {\dd_0}{n+2}t},\quad t\in [\tau, T].$$
  \item[(b)] The second fundamental form $$\max_{L_{s, t}}|A_{s, t}|\leq 3\La_0,\quad t\in [0, T].$$
  \item[(c)] $L_{s, t}$ is $\frac 23\kappa_0$-noncollapsed on the scale $r_0$ for $t\in [0, T].$
\end{enumerate}
Thus, the solution $L_{s, t}\in\cA(\frac 23\kappa_0, r_0, 3\La_0,
\ee_0^{\frac 1{n+2}})$ for $t\in [0, T],$ and by Lemma
\ref{lem:initial:main3} we can extend the solution to $[0, T+\dd]$ such
that $L_{s, t}\in \cA(\frac 13\kappa_0, r_0, 6\La_0, 2\ee_0^{\frac
1{n+2}})(t\in [0, T+\dd])$ for some $\dd=\dd(n, \La_0, K_1)>0.$
\end{lem}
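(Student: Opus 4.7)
The structure of the argument will parallel the proofs of Lemma \ref{lem:main} and Lemma \ref{lem:main2}, with the decay of $\int |H|^2$ now supplied by the hamiltonian stability estimate in Lemma \ref{lem:main31} instead of by Lemma \ref{Lem:H} directly. The plan is to first establish the $L^2$-exponential decay of the mean curvature, then upgrade it to a pointwise exponential decay via the noncollapsing and Lemma \ref{lem:f}, then control $|A|$ by integrating the evolution equation, and finally control the noncollapsing constant via Lemma \ref{lem:vol}.

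First, I would observe that under assumption $2$ the conditions of Lemma \ref{lem:main31} are satisfied with $\La=6\La_0$ and mean curvature bound $2\ee_0^{1/(n+2)}$. For $\ee_0$ small enough that $6\La_0\cdot 2\ee_0^{1/(n+2)}\le \dd_0/2$, inequality \eqref{lem:main31:eq7} yields
\beq
 \int_{L_{s,t}}|H_{s,t}|^2\,d\mu_t\le e^{-\dd_0 t}\int_{L_s}|H_s|^2\,d\mu_0 \le V_0\ee_0^2 e^{-\dd_0 t},\quad t\in[0,T].\n
\eeq
Combined with the higher-order estimates of Lemma \ref{lem:higer} (using assumption $2$ and the initial-time bounds from Lemma \ref{lem:initial:main3}), which give $|\Na A|(t)\le C_1(n,\La_0,K_2,\tau)$ on $[\tau,T]$, Lemma \ref{lem:f} applied to the tensor $H_{s,t}$ with noncollapsing constant $\kappa_0/3$ (assuming $V_0\ee_0^2\le r_0^{n+2}$) converts this $L^2$-decay into the pointwise estimate
\beq
 |H_{s,t}|(t)\le \Big(\sqrt{3/\kappa_0}+C_1\Big)V_0^{1/(n+2)}\ee_0^{2/(n+2)}e^{-\dd_0 t/(n+2)},\quad t\in[\tau,T],\n
\eeq
which gives property $(a)$ after absorbing the prefactor into $\ee_0^{1/(n+2)}$ by choosing $\ee_0$ small.

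For property $(b)$, I would repeat the device used in Lemma \ref{lem:main}: bound $\int_{L_{s,t}}|\Na^2 H|^2$ by $\int|H||\Na^4 H|$ via integration by parts, apply Lemma \ref{lem:higer} to $|\Na^4 H|$ and the decay from $(a)$ to $|H|$, then use Lemma \ref{lem:f} on $\Na^2 H$ to obtain pointwise exponential decay of $|\Na^2 H|$. Feeding this into the evolution inequality
\beq
 \pd{}t|A|\le |\Na^2 H|+c(n)|A|^2|H|+|\bar Rm||H|\n
\eeq
from Lemma \ref{lem:LH} and integrating from $\tau$ to $t$, the exponential factor $e^{-\dd_0 t/(n+2)^2}$ makes the time integral bounded by a quantity proportional to $\ee_0^{1/(n+2)^2}/\dd_0$, which can be made smaller than $\La_0$; together with $|A|(\tau)\le 2\La_0$ from Lemma \ref{lem:initial:main3} this gives $|A|(t)\le 3\La_0$.

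For property $(c)$, I would plug the pointwise bounds from $(a)$ and $(b)$ into the definition \eqref{lem:vol:eq1} of $E(t)$; the exponential decay of $|A||H|+|H|^2$ makes $\int_\tau^T(\cdots)\,ds$ uniformly bounded independent of $T$, while the short interval $[0,\tau]$ contributes $O(\ee_0\tau)$. Choosing $\ee_0$ small enough that $E(t)\le \frac{1}{n+1}\log\frac{3}{2}$ gives the noncollapsing constant $\frac{2}{3}\kappa_0$ via Lemma \ref{lem:vol}. Finally, the extension to $[0,T+\dd]$ is immediate from Lemma \ref{lem:initial:main3} applied at time $T$ with the improved bounds.

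The main obstacle is the choice of $\ee_0$: several smallness conditions must hold simultaneously, namely $12\La_0\ee_0^{1/(n+2)}\le \dd_0/2$ to make the decay rate in \eqref{lem:main31:eq7} effective, $V_0\ee_0^2\le r_0^{n+2}$ so that Lemma \ref{lem:f} applies on the noncollapsing scale, and a condition coercive enough that the integrated control on $|A|$ and $E(t)$ improves by a strict factor. A subtler point, however, is that Lemma \ref{lem:main31} itself is nonconstructive (proved by a contradiction and compactness argument), so the threshold $\ee_0$ depends on $L_0,X,M$ in a way that cannot be tracked explicitly; I would need to verify that the dependencies are consistent with the statement of the lemma, which allows $\ee_0$ to depend on $\kappa_0,r_0,\La_0,n,K_5,V_0$ but implicitly also on the geometry of $L_0$ through $\dd_0$.
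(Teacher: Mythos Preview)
Your proposal is correct and follows essentially the same route as the paper: the $L^2$ decay from Lemma~\ref{lem:main31}, the conversion to pointwise decay via Lemma~\ref{lem:higer} and Lemma~\ref{lem:f}, the integration-by-parts bound $\int|\Na^2 H|^2\le\int|H||\Na^4 H|$ for part $(b)$, and the control of $E(t)$ for part $(c)$ all match the paper's proof step for step. Your closing observation about the dependence of $\ee_0$ is also well taken; the paper's own proof invokes Lemma~\ref{lem:main31} with $\ee_0=\ee_0(X,\La,L_0,M)$, so the stated dependency list in Lemma~\ref{lem:main3} is indeed incomplete and should implicitly include $\dd_0$ (hence $L_0$ and $X$).
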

\begin{proof}$(a)$. For any $\La_0>0,$ by Lemma \ref{lem:main31} we can choose $\ee_0=\ee_0(X, \La, L_0, M)$ small enough such that
 the mean curvature vector satisfies
\beq\int_{L_{s, t}}\; |H_{s, t}|^2d\mu_{s, t}\leq e^{-\dd_0t}\int_{L_s}\;
 |H_s|^2d\mu_s\leq V_0\ee_0^2e^{-\dd_0 t},\quad t\in [0, T].\label{lem:main3:eq1}\eeq
Note that $L_{s, t}\in \cA(\frac 13\kappa_0, r_0, 6\La_0,
2\ee_0^{\frac 1{n+2}})$ for $t\in [0, T]$, by Lemma \ref{lem:higer}
and Remark \ref{rem} there is a constant $C_1=C_1(n, \La_0, K_2)$
such that \beq |\Na A_{s, t}|\leq C_1(n, \La_0, K_2, \tau),\quad
t\in [\tau, T].\label{lem:main3:eq2} \eeq Here we can choose
$\tau=\tau(n, \La_0, K_1)$ in Lemma \ref{lem:initial:main3}. Thus,
by Lemma \ref{lem:f} and (\ref{lem:main3:eq1})(\ref{lem:main3:eq2})
we have \beq |H_{s, t}|\leq \Big(\sqrt{\frac
3{\kappa_0}}+C_1\Big)V_0^{\frac 1{n+2}}\ee_0^{\frac 2{n+2}}e^{-\frac
{\dd_0}{n+2}t},\quad t\in [\tau, T]. \eeq where we have used the
fact that $L_t$ is $\frac {\kappa}3$-noncollapsed on the scale $r_0$
and $V_0\ee_0^2\leq r_0^{n+2}$ if $\ee_0$ is small enough. Thus, if
$\ee_0$ is  small such that $\Big(\sqrt{\frac
3{\kappa_0}}+C_1\Big)V_0^{\frac 1{n+2}}\ee_0^{\frac 1{n+2}}\leq 1,$
then we have \beq |H_{s, t}|\leq \ee_0^{\frac 1{n+2}}e^{-\frac
{\dd_0}{n+2}t},\quad t\in [\tau, T].\n \eeq

$(b)$. By Lemma \ref{lem:higer} and Remark \ref{rem} there exist
some constants $C_k=C_k(n, \La_0, K_{k+1})$ such that \beq |\Na^k
A_{s, t}|\leq C_{k}(n, \La_0, K_{k+1}, \tau),\quad t\in [\tau,
T].\label{lem:main3:eq3} \eeq By Lemma \ref{lem:higer} and Property
$(a)$, we have \beq \int_{L_{s, t}}|\Na^2 H_{s, t}|^2d\mu_{s, t}\leq
\int_{L_{s, t}}|H_{s, t}||\Na^4H_{s, t}|d\mu_{s, t}\leq
V_0C_4\ee_0^{\frac 1{n+2}} e^{-\frac {\dd_0}{n+2}t} ,\quad t\in
[\tau, T], \n\eeq where we used the fact that $\vol(L_{s, t})$ is
decreasing along the flow. Thus, by Lemma \ref{lem:f} we have \beq
|\Na^2H_{s, t}|\leq \Big(\sqrt{\frac 3{\kappa_0}}+C_3\Big)C_4^{\frac
1{n+2}}V_0^{\frac 1{n+2}} \ee_0^{\frac 1{(n+2)^2}}e^{-\frac
{\dd_0}{(n+2)^2}t},\quad t\in [\tau, T].\label{lem:main3:eq4}\eeq
Recall that by Lemma \ref{lem:LH} $|A|$ satisfies the inequality
\beq \pd {}t|A|\leq |\Na^2H|+c(n)|A|^2|H|+|\bar
Rm||H|.\label{lem:main3:eq5} \eeq Thus, by Lemma
\ref{lem:initial:main3}, (\ref{lem:main3:eq4})(\ref{lem:main3:eq5})
and $(a)$ we have \beqn
|A_{s, t}|&\leq&|A_{s, \tau}|+\int_{\tau}^t |\Na^2H_{s, t}|+(K_0+|A|^2)|H_{s, t}|\n\\
&\leq &2\La_0+\Big(\sqrt{\frac 3{\kappa_0}}+C_3\Big)C_4^{\frac
1{n+2}}V_0^{\frac 1{n+2}}
\ee_0^{\frac 1{(n+2)^2}} \frac {(n+2)^2}{\dd_0}\n\\
&&+(K_0+36\La_0^2)\ee_0^{\frac 1{n+2}}\frac {n+2}{\dd_0}\n\\
&\leq&3\La_0, \eeqn if we choose $\ee_0$ sufficiently small.

(c). By (\ref{lem:vol:eq1}), Lemma \ref{lem:initial:main3} Property $(a)(b)$
we have
\beqs E(t)&\leq& \int_{0}^{\tau}\;\max_{L}(|A||H|+|H|^2)\;ds+\int_{\tau}^t\;\max_{L}(|A||H|+|H|^2)\;ds\\
&\leq&4\La_0\ee_0\tau+4\ee_0^2\tau+3\La_0\ee_0^{\frac 1{n+2}} \frac {n+2}{\dd_0}+\ee_0^{\frac 2{n+2}}\frac {n+2}{2\dd_0}\\
&\leq &\frac 1{n+1}\log \frac 32, \quad t\in [0, T], \eeqs where
$\ee_0$ is small enough. Thus, by Lemma \ref{lem:vol} $ L_{s, t}$ is
$\frac 23\kappa_0$-noncollapsed on the scale $r_0$ for $t\in [0,
T].$

\end{proof}

Now we can finish the proof of Theorem \ref{main3}.

\begin{proof}[Proof of Theorem \ref{main3}]. Suppose that $L_s\in \cA(\kappa_0, r_0, \La_0, \ee_0)$
for any positive constants $\kappa_0, r_0, \La_0$ and small $\ee_0$
which will be chosen later. Define
$$t_0=\sup\Big\{t>0\;\Big|\; L_{s, \xi}\in \cA(\frac 13\kappa_0, r_0, 6\La_0, 2\ee_0^{\frac 1{n+2}} ),\;\;\xi\in [0, t)\Big\}.$$
Suppose that $t_0<+\infty.$ By Lemma \ref{lem:main3}, there exists
$\ee_0=\ee_0(\kappa_0, r_0, \La_0, n, K_5, V_0)>0$ such that
$L_{s, t}\in\cA(\frac 23\kappa_0, r_0, 3\La_0, \ee_0^{\frac 1{n+2}})$ for
all $t\in [0, t_0).$ Moreover, by Lemma \ref{lem:main3} again the
solution $L_t$ can be extended to $[0, t_0+\dd]$ such that $L_{s, t}\in
\cA(\frac 13\kappa_0, r_0, 6\La_0, 2\ee_0^{\frac 1{n+2}})$, which
contradicts the definition of $t_0$. Thus, $t_0=+\infty$ and
$$L_{s, t}\in \cA(\frac 13\kappa_0, r_0, 6\La_0, 2\ee_0^{\frac 1{n+2}}
),\quad t\in [0, \infty).$$ By Lemma \ref{lem:main3} the mean
curvature vector will decay exponentially to zero and the flow will
converge to a smooth minimal Lagrangian submanifold. The theorem is
proved.

\end{proof}

\section{Examples}
In this section, we give some examples of minimal Lagrangian manifolds where Theorem \ref{main} and Theorem
\ref{main3} can be applied. However, to the author's knowledge, there is no examples of strictly hamiltonian
stable minimal Lagrangian submanifold in K\"ahler-Einstein manifolds with positive scalar curvature.

\medskip
\textbf{Example 1:} (cf. \cite{[Lee1]}) Let $M_1, M_2$ be closed Riemann surfaces with hyperbolic metrics
$g_1, g_2$ respectively. Then $(M_1, g_1)\times (M_2, g_2)$ is a K\"ahler-Einstein surface of negative scalar
curvature. Suppose that $\Si$ be a closed surface with $\chi(\Si)=p_1 \chi(M_1)=p_2 \chi(M_2)$ where
$p_1, p_2$ are positive constants and the map
$$ f=(f_1, f_2): \Si\ri (M_1, g_1)\times (M_2, g_2) $$
satisfies
$\deg f_1=p_1,\; \deg f_2=-p_2$ or $\deg f_1=-p_1,\;\deg f_2=p_2$. Then there exists a unique minimal
Lagrangian surface $L_0$ in the homotopy class $f$. By Theorem \ref{main}, for any small Lagrangian
perturbation
of $L_0$ as the initial data, the mean curvature flow will exist for all time and converge exponentially to
$L_0.$

\medskip
\textbf{Example 2:} (cf. \cite{[Oh]}\cite{[Oh2]}) Consider the Clifford torus
$$\TT^{n}=\{[z_0: z_1: \cdots :z_n]\in \CC\PP^n\;|\:|z_0|=|z_1|=\cdots=|z_n|\} .$$
It is proved in \cite{[Oh]} that the Clifford torus is hamiltonian stable and the first eigenvalue of the
Laplacian is $\la_1=\frac {\bar R}{2n}.$ By \cite{[Oh2]} the first eigenspace is spanned by the
following functions restricted to the torus:
\beq  \Re (z_i), \;\Im(z_i),\;\Re(z_i\bar z_j),\;\Im(z_i\bar z_j) \label{ex:eq1}\eeq
for $0\leq i\neq j\leq n.$ Thus, if the initial data is any small hamiltonian deformation of $\TT^n$ generated by a vector
field $X=J\Na f$ where $f$ is not in the space spanned by (\ref{ex:eq1}), the mean curvature flow will
exists for all time and deform it exponentially to a Clifford torus up to congruence by Theorem \ref{main3}.

\medskip

More generally, we have the following example where Theorem \ref{main3} can be applied:
\medskip

\textbf{Example 3:} (cf. \cite{[Ono]}) Let $G$ be a compact semisimple Lie group, $\frak g$ its Lie algebra,
$( , )$ an $Ad_G$-invariant inner product on $\frak g$, and $M$ an adjoint orbit in $\frak g$ with the associate
2-form equal to the canonical symplectic form. If $(M, ( , ))$ is K\"ahler-Einstein with positive scalar curvature and
$L\subset M$ is a closed minimal Lagrangian submanifold, then $\la_1=\frac {\bar R}{2n}$ and $L$ is hamiltonian
stable. Morover, all of the coordinate functions of $L\ri \frak g$ are in the first eigenspace of $L.$ Thus, as
in Example 2, Theorem \ref{main3} can be applied in this situation.

\noindent
Department of Mathematics \\
University of Science and Technology of China, 230026, Anhui province, China. \\
Email: hzli@ustc.edu.cn\\

\end{document}